\definecolor{mygrey}{RGB}{200,200,200}
\newcommand{\N}	{\mathbb N}
\newcommand{\Z}	{\mathbb Z}
\newcommand{\R}	{\mathbb R}
\newcommand{\Cay}	{\operatorname{Cay}}
\newcommand{\supp}	{\operatorname{supp}}
\newcommand{\freeproduct}{*}
\newtheorem{thm}{Theorem}[section] 
\newtheorem{prop}[thm]{Proposition}
\newtheorem{lem}[thm]{Lemma} 
\newtheorem{cor}[thm]{Corollary}
\newtheorem*{thm*}{Theorem} 
\newtheorem*{prop*}{Proposition}
\newtheorem*{lem*}{Lemma} 
\newtheorem*{cor*}{Corollary}
\theoremstyle{definition}
\newtheorem{defi}[thm]{Definition}
\newtheorem{example}[thm]{Example}
\newtheorem{remark}[thm]{Remark}
\newtheorem*{defi*}{Definition}
\newtheorem*{example*}{Example}
\newtheorem*{remark*}{Remark}
\newtheorem*{problem*}{Problem}
\theoremstyle{plain}
\begin{document}
\tikzset{->-/.style={decoration={
  markings,
  mark=at position .6 with {\arrow{>}}},postaction={decorate}}}
\tikzset{-<-/.style={decoration={
  markings,
  mark=at position .6 with {\arrow{<}}},postaction={decorate}}}
\tikzset{->>-/.style={decoration={
  markings,
  mark=at position .7 with {\arrow{>}},
  mark=at position .5 with {\arrow{>}}},
  postaction={decorate}}}
\tikzset{-<<-/.style={decoration={
  markings,
  mark=at position .7 with {\arrow{<}},
  mark=at position .5 with {\arrow{<}}},
  postaction={decorate}}}

\title[$C(6)$-groups are SQ-universal]{Infinitely presented $C(6)$-groups are SQ-universal}
 \subjclass[2010]{Primary: 20F06; Secondary: 20F65.}
 \keywords{Small cancellation theory}
\author{Dominik Gruber}
 \thanks{The work is supported by the ERC grant of Prof. Goulnara Arzhantseva ``ANALYTIC" no. 259527.}
 \address{Department of Mathematics, University of Vienna, Oskar-Morgenstern-Platz 1, 1090 Vienna, Austria}
 \email{dominik.gruber@univie.ac.at}
\maketitle
\begin{abstract}
We prove that infinitely presented classical $C(6)$ small cancellation groups are SQ-universal. We extend the result to graphical $Gr_*(6)$-groups over free products. For every $p\in\N$, we construct uncountably many pairwise non-quasi-isometric groups that admit classical $C(p)$-presentations but no graphical $Gr'(\frac{1}{6})$-presentations.\end{abstract}
\section{Introduction}

Small cancellation theory is a rich source of examples and counterexamples of finitely generated infinite groups. Classical small cancellation theory, as defined in \cite[Chapter V]{LS}, has provided many hyperbolic groups and limits of hyperbolic groups. It has been generalized in various ways, and these generalizations have been used to study and construct famous groups such as Burnside groups \cite{Aid}, Tarski monsters \cite{Ols82}, and, more recently, Gromov's monsters \cite{Gro,AD,Osa}.

The class of $C(6)$ small cancellation groups is, in a sense, the largest nontrivial class of classical small cancellation groups. \emph{Every} group admits a $C(5)$-presentation.
While $C(6)$-groups are not necessarily limits of hyperbolic groups, they exhibit features of nonpositive curvature. For example, any $C(6)$-presentation where no relator is a proper power is \emph{aspherical}, i.e. the associated presentation complex is aspherical, see \cite{CCH} and \cite[Theorem 13.3]{Ols89}.

The major result of this paper is that all infinitely presented classical $C(6)$-groups are SQ-universal. A group $G$ is \emph{SQ-universal} if every countable group $C$ can be embedded in a quotient $Q$ of $G$. A countable SQ-universal group must have uncountably many pairwise non-isomorphic proper quotients. 
Moreover, any SQ-universal group contains non-abelian free subgroups and, in particular, is non-amenable. 
Our result applies to a host of concrete examples of infinitely presented small cancellation groups such as the infinite groups with no nontrivial finite quotients due to Pride \cite{Pride}, the uncountably many pairwise non-quasi-isometric 2-generated groups due to Bowditch \cite{Bow}, and the first groups with two non-homeomorphic asymptotic cones due to Thomas and Velickovic \cite{TV}. 

We extend our main result to infinitely presented graphical small cancellation groups over free products of groups. This is achieved by presenting a new viewpoint on graphical small cancellation presentations over free products that lets us naturally generalize our proofs with little technical effort. We moreover present the first construction of groups that distinguish the \emph{metric} $C'(\lambda)$-small cancellation condition from the \emph{non-metric} $C(p)$-condition by producing for every $p$ uncountably many $C(p)$-groups that do not admit any $C'(\frac{1}{6})$-presentation.

Many results about the SQ-universality of \emph{finitely presented} small cancellation groups are known: All non-elementary hyperbolic groups are SQ-universal \cite{Ols,Del}. Since any finitely presented $C(p)$-$T(q)$-group, where $\frac{1}{p}+\frac{1}{q}<\frac{1}{2}$, is hyperbolic, this result covers a large class of finitely presented small cancellation groups. Furthermore, the SQ-universality of finitely presented $C(3)$-$T(6)$-groups has been investigated with partial positive results \cite{How}. Al-Janabi claimed in his 1977 PhD thesis \cite{AJ} that all finitely presented $C(6)$-groups (except the obvious exceptions of cyclic and infinite dihedral groups) are SQ-universal. This claim has been restated in a still unpublished recent work by Al-Janabi, Collins, Edjvet and Spanu \cite{ACES}. Both results rely on elaborate proofs, involving many intricate constructions and case distinctions. 

In the first part of this paper, we give a concise proof that all \emph{infinitely presented} $C(6)$-groups are SQ-universal. This is the first direct proof of SQ-universality of infinitely presented small cancellation groups. Until very recently, no results on the SQ-universality of infinitely presented small cancellation groups were known. In \cite{GS} it is shown that the smaller class of infinitely presented $C(7)$-groups is acylindrically hyperbolic. Applying the theory of hyperbolically embedded subgroups \cite{DGO}, this provides a proof that infinitely presented $C(7)$-groups are SQ-universal. The result of \cite{GS} uses the fact that infinitely presented $C(7)$-groups are limits of hyperbolic groups. In contrast, infinitely presented $C(6)$-groups are not limits of hyperbolic groups in general, and, hence, require a different method of proof. The proof given in the present paper relies on the fact that $C(6)$-presentations are aspherical. It does not require any notion of hyperbolicity, and we 
expect that it can 
be generalized to further aspherical presentations. Our result also applies to many finitely presented $C(6)$-groups as explained in Remark~\ref{remark:finite_presentation}.

In the second part of the paper, we present a new viewpoint on graphical small cancellation presentations over free products that lets us extend our main result to groups defined by such presentations. Graphical small cancellation theory is a generalization of classical small cancellation theory. It was introduced by Gromov in the course of his construction of Gromov's monster \cite{Gro}. In a prior paper, we used graphical small cancellation theory to construct lacunary hyperbolic groups that coarsely contain prescribed infinite sequences of finite graphs \cite{Gru}. Classical and graphical small cancellation presentations over free products have provided various embedding theorems \cite[Chapter V]{LS} and, more recently, the first examples of torsion-free hyperbolic non-unique product groups \cite{Ste,AS}.

In the third part of the paper, we give the first examples of groups distinguishing the class of groups defined by \emph{metric} $C'(\lambda)$ small cancellation presentations from the class of groups defined by \emph{non-metric} $C(p)$-presentations. It is well-known that the class of $C(6)$-groups is strictly larger than the class of $C(7)$-groups, and therefore strictly larger than the class of $C'(\frac{1}{6})$-groups. This follows from the fact that there exist $C(6)$-groups containing $\Z^2$, while any $C(7)$-group cannot contain $\Z^2$. It is immediate from the definitions that the class of $C(p)$-groups contains the class of $C'(\frac{1}{p-1})$-groups. 

We show that for every $p>6$, the class of $C(p)$-groups is \emph{strictly} larger than the class of $C'(\frac{1}{p-1})$-groups by producing uncountably many pairwise non-isomorphic $C(p)$-groups that do not admit any $C'(\frac{1}{6})$-presentation. In fact, the $C(p)$-groups we construct have the stronger property that they do not admit any graphical $Gr'(\frac{1}{6})$-presentation. To prove this, we show a result of independent interest: In every, possibly infinitely presented, graphical $Gr'(\frac{1}{6})$-group, every cyclic subgroup is undistorted.



\subsection{Statement of results}
We briefly state our results here. Our proofs actually yield slightly stronger statements, see the referenced theorems.

\begin{defi} A group $G$ is \emph{SQ-universal} if for every countable group $C$ there exists a quotient $Q$ of $G$ such that $C$ embeds into $Q$. 
\end{defi}

\begin{thm}[cf. Theorem~\ref{thm:c6}]
 Let $G=\langle S\mid R\rangle$ be a $C(6)$-presentation, where $S$ is finite and $R$ is infinite. Then $G$ is SQ-universal.
\end{thm}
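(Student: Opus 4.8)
The plan is to carry out, relative to $G$, the classical small cancellation argument that every countable group embeds in a $2$-generated group. Fix a countable group $C$ with a presentation $C=\langle c_1,c_2,\dots\mid\rho_1,\rho_2,\dots\rangle$, and aim for an extended presentation
\[
Q=\big\langle\,S\ \big|\ R\cup\{\,\rho_k(w_1,w_2,\dots):k\in\N\,\}\,\big\rangle ,
\]
where $w_1,w_2,\dots$ are words over $S$ and $\rho_k(w_1,w_2,\dots)$ denotes the result of substituting $c_j\mapsto w_j$ into $\rho_k$, subject to: the displayed presentation is again a $C(6)$-presentation (with proper powers allowed, as these are forced once $C$ has torsion), and $c_j\mapsto w_j$ defines an embedding $C\hookrightarrow Q$. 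Since every new relator is a word over $S$, there is a canonical epimorphism $G\twoheadrightarrow Q$, so this gives SQ-universality. Note that $|S|\geq 2$, since a one-generator presentation cannot be $C(6)$ with an infinite relator set, and that, $S$ being finite and $R$ infinite, $R$ contains relators of unbounded length; both facts feed into the choice of the $w_j$.

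\emph{Choice of the $w_j$.} The key requirement is that no long subword of a relator of $R$ be ``assembled'' from the words $w_j$; otherwise adjoining the relators $\rho_k(w_1,w_2,\dots)$ would create new pieces merging pieces of $R$-relators and could destroy the $C(6)$ condition. I would take each $w_j$ to be a carefully chosen subword of a relator $r_{n(j)}\in R$, with the $n(j)$ pairwise distinct --- here the infinitude of $R$ is used --- and with the endpoints placed so that $w_j$ occurs essentially uniquely within $R^*$; the $C(6)$ condition on $R$ itself then bounds the overlaps among distinct $w_j$ and between any $w_j$ and the remaining relators. One then verifies the $C(6)$ condition for the symmetrised set obtained from $R$ together with the relators $\rho_k(w_1,w_2,\dots)$: overlaps between two $\rho$-relators, or between a $\rho$-relator and an $R$-relator, come from overlaps among the $w_j$ and are short, while overlaps among $R$-relators are unchanged.

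\emph{The embedding.} The map $\iota\colon C\to Q$, $c_j\mapsto w_j$, is a homomorphism because each $\rho_k(w_1,w_2,\dots)$ is trivial in $Q$. For injectivity, let $u=u(c_1,c_2,\dots)$ be reduced and nonempty with $u(w_1,w_2,\dots)=1$ in $Q$, and take a reduced van Kampen diagram $D$ over the $C(6)$-presentation of $Q$ with boundary word $u(w_1,w_2,\dots)$. Greendlinger's Lemma in the form valid for $C(6)$-presentations gives a boundary cell $\Pi$ whose boundary meets $\partial D$ in an arc missing only boundedly many pieces of $\partial\Pi$; by the choice of the $w_j$ a reduced product of the $w_j^{\pm1}$ cannot contain that much of an $R$-relator, so $\Pi$ carries some $\rho_k(w_1,w_2,\dots)$. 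The shared arc is then simultaneously a subword of $u(w_1,w_2,\dots)$ and of $\rho_k(w_1,w_2,\dots)$, both reduced products of the $w_j$, so the near-uniqueness of occurrences of the $w_j$ inside such products forces $u$, read as a word in the $c_j$, to contain all but a bounded part of $\rho_k$. Replacing that subword of $u$ by the short complementary part of $\rho_k$ changes $u$ only by a relator of $C$, keeps $u(w_1,w_2,\dots)=1$ in $Q$, and simplifies $D$; iterating, and using that the $w_j$ freely generate a free subgroup of $F(S)$, we arrive at a freely trivial word and conclude $u=1$ in $C$.

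The main obstacle is the uniform interplay with the entire infinite set $R$: both the verification of $C(6)$ for the extended presentation and the analysis of van Kampen diagrams over $Q$ --- which may contain arbitrarily many $R$-cells interleaved with the new cells --- must be carried out over all of $R$ at once, not merely over finitely many relators. This is precisely where asphericity of $C(6)$-presentations enters: it ensures that the $R$-cells introduce no hidden relations among the $w_j$ and that the ``$C$-part'' of such a diagram reads off as a bona fide van Kampen diagram over $\langle c_1,c_2,\dots\mid\rho_1,\rho_2,\dots\rangle$. Finally, torsion in $C$ produces proper-power relators $\rho_k(w_1,w_2,\dots)$, which are accommodated by the version of $C(6)$ theory that is aspherical relative to its proper powers.
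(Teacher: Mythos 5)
Your plan hinges on the requirement that the extended presentation $\langle S\mid R\cup\{\rho_k(w_1,w_2,\dots)\}\rangle$ again satisfy $C(6)$, and this is exactly where the approach breaks down. With your choice of $w_j$ as a subword of a relator $r_{n(j)}\in R$, the whole block $w_j$ occurring inside any new relator $\rho_k(w_1,w_2,\dots)$ is a common subword of that new relator and of $r_{n(j)}$, i.e.\ it is a \emph{piece} of the extended (symmetrized) presentation, no matter how ``essentially uniquely'' $w_j$ sits inside $R$. Consequently every new relator coming from a relator $\rho_k$ of $C$ of length at most $5$ in the $c_j$ --- for instance $c^2$ or $c^3$ when $C$ has torsion, or a commutator --- is a product of at most $5$ pieces, so the $C(6)$-condition fails; your remark that proper powers are ``accommodated'' does not address this, since the problem is the piece count, not asphericity. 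Nor can you repair this by re-presenting or enlarging $C$: pieces of the extended presentation also include overlaps of products of the $w_j^{\pm1}$ with the infinitely many relators of $R$, and since $C(6)$ is a \emph{non-metric} condition, a single piece may cover almost an entire relator; nothing prevents arbitrarily long words over the finite alphabet $S$ (indeed whole products $w_{j_1}w_{j_2}\cdots$) from occurring inside relators of $R$, so there is no uniform way to certify $C(6)$ for the enlarged set. The same non-metric issue undermines the embedding step: the Greendlinger-type statement available for $C(6)$ (via curvature on $(6,3)$-diagrams) only controls the \emph{number} of interior arcs of a boundary face, not their length, so ``missing only boundedly many pieces'' does not imply that a bounded amount of an $R$-relator appears on $\partial D$, and the replacement-and-induction argument does not go through as stated.

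The paper circumvents precisely this obstruction by not asking the quotient presentation to satisfy any small cancellation condition at all. It proves that $G$ has Olshanskii's property $F(2)$: two elements $\alpha_1,\alpha_2$ are built as products of eight ``halves of relators'' each (halves in the sense of piece distance $\geqslant 3$, taken from $16$ distinct relators, with disjoint supports at the junctions to prevent cancellation), and a minimal-area diagram argument combined with the curvature formula for spherical diagrams shows that $\langle\alpha_1,\alpha_2\rangle$ is free of rank $2$ and has the congruence extension property. In that argument the extra relations --- arbitrary words in $\alpha_1^{\pm1},\alpha_2^{\pm1}$ generating an arbitrary normal subgroup $N$ of the free group --- enter only as $L$-faces that get surrounded by $W$-faces and replaced by wheels; no small cancellation hypothesis on them is needed. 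SQ-universality then follows from Olshanskii's theorem that $F(2)$ implies SQ-universality, which packages the Higman--Neumann--Neumann embedding of $C$ into a $2$-generated group and removes any need to control relators of $C$ directly. If you want to salvage your direct construction, you would in effect have to prove a statement equivalent to the CEP for the subgroup generated by your $w_j$, which is the route the paper takes.
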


We discuss \emph{graphical} small cancellation conditions. For precise definitions, see Section~\ref{section:graphical}. Given a graph $\Gamma$ labelled by a set $S$, the group $G(\Gamma)$ is defined as the quotient of the free group on $S$ by the normal subgroup generated by all words read on closed paths in $\Gamma$. Given a free product $*_{i\in I}{G_i}$ and a graph labelled over $\sqcup_{i\in I} S_i$, where for each $i$, $S_i$ is a generating set of $G_i$, $G(\Gamma)$ is the quotient of $*_{i\in I}{G_i}$ by the normal subgroup generated by all words read on closed paths in $\Gamma$. 

A \emph{piece} in $\Gamma$ is a labelled path that occurs in two distinct places in $\Gamma$. The graphical versions of the $C(6)$-condition require that no nontrivial closed path is the concatenation of fewer than $6$ pieces. The graphical versions of the $C'(\frac{1}{6})$-condition require that any piece $p$ that is a subpath of a simple closed path $\gamma$ satisfies $|p|<\frac{|\gamma|}{6}$. The graphical $Gr$-conditions allow for label-preserving automorphisms of $\Gamma$, which corresponds to considering relators that are proper powers in the classical case.

\begin{thm}[cf. Theorem~\ref{thm:gr6}]
 Let $\Gamma$ be a $Gr_*(6)$-labelled graph over a free product of infinite groups that has at least 16 pairwise non-isomorphic finite components with nontrivial fundamental groups. Then $G(\Gamma)$ is SQ-universal.
\end{thm}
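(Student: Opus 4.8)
**Plan for proving the $Gr_*(6)$ SQ-universality result.**

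The plan is to reduce the graphical free-product statement to the classical $C(6)$ case (Theorem~\ref{thm:c6}) by finding, inside the ambient free product, a suitable free subgroup over which $G(\Gamma)$ — or a quotient of it — admits an ordinary $C(6)$-presentation on a finite generating set. The first step is to exploit the hypothesis of $16$ pairwise non-isomorphic finite components with nontrivial $\pi_1$: pick one edge in each such component whose label lies in one of the free factors $G_i$, and use these to identify a collection of elements of $*_{i\in I} G_i$ that, because the components are non-isomorphic, no piece can "confuse." The non-isomorphism is what prevents long pieces spanning two different components, so that the only pieces are those internal to a single component; this is the graphical analogue of distinct relators sharing only short subwords. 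I would then pass to the quotient of each infinite free factor $G_i$ onto a suitable group (for instance onto $\Z$, or onto a free group, killing enough of $G_i$) — graphical small cancellation over free products is compatible with quotienting the factors as long as the labelling stays reduced — to arrive at a classical $C(6)$-presentation $\langle S \mid R\rangle$ with $S$ finite and $R$ infinite (infinite because infinitely many components, or one can thicken finitely many components by taking an infinite family of "copies," but the cleanest route is that the $16$ components already force enough relators via the free-product structure; more carefully, one uses that a graph with components can be used to build infinitely many independent relators).

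More concretely, the key steps in order: (1) Set up the free-product graphical machinery from Section~\ref{section:graphical} and record the "new viewpoint" that lets $Gr_*(6)$-presentations be read as classical presentations after choosing generating sets — in particular that closed paths in the $16$ chosen components give rise to a set $R$ of cyclic words over a finite alphabet $S$. (2) Verify the classical $C(6)$-condition for $\langle S\mid R\rangle$: this is where the $16$-non-isomorphic-components hypothesis and the $Gr_*(6)$ hypothesis on $\Gamma$ combine — a classical piece between two relators corresponds to a labelled path occurring in two places in $\Gamma$, hence to a graphical piece, and $Gr_*(6)$ bounds how many of these can tile a closed path. (3) Check $R$ is infinite: here one needs either infinitely many components, or one must argue that even $16$ components with nontrivial fundamental group yield, via the free-product amalgamation, infinitely many distinct defining relators in $G$ — the honest way is to observe that each nontrivial $\pi_1$ contributes infinitely many closed paths (iterating loops), all of which become relators, and $Gr_*(6)$ keeps them small-cancellation-independent. (4) Apply Theorem~\ref{thm:c6} to conclude that this quotient, and hence $G(\Gamma)$ itself (a quotient surjects onto a quotient), is SQ-universal — using that SQ-universality passes to groups that surject onto an SQ-universal group, i.e. if $Q$ is a quotient of $G$ and $Q$ is SQ-universal then so is $G$.

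The main obstacle I expect is step (2) together with making step (3) rigorous without circularity: ensuring that after choosing finite generating sets $S_i$ for the (quotients of the) factors and reading off relators from the $16$ components, the resulting classical presentation genuinely satisfies $C(6)$ and genuinely has infinitely many relators. The subtlety is that pieces in the classical sense can arise not only from graphical pieces but also from cancellation and amalgamation happening in the free product — a subword of a relator might coincide with a subword of another relator "through" the free-product normal form rather than through $\Gamma$. Controlling this is precisely the role of the "new viewpoint on graphical small cancellation presentations over free products" advertised in the introduction: one must show the free-product structure is reflected faithfully in $\Gamma$ (via attaching the Cayley graphs of the $G_i$, or a van Kampen / disc-diagram argument over the free product) so that every classical piece is visible as a graphical piece. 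Once that faithfulness lemma is in place, $Gr_*(6)$ gives $C(6)$ for free, the non-isomorphism of the $16$ components guarantees the pieces stay short enough, nontriviality of the fundamental groups supplies infinitely many relators, and Theorem~\ref{thm:c6} finishes the argument.
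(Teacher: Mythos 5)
Your strategy---quotient the free factors to obtain a classical $C(6)$-presentation with finite $S$ and infinite $R$, apply Theorem~\ref{thm:c6}, and use that a group surjecting onto an SQ-universal group is SQ-universal---does not work as stated, and the obstacle you flag at the end is not a technicality but the point where the argument breaks. First, the free factors are arbitrary infinite groups: they need not surject onto $\Z$ or onto any free group (infinite simple groups, Tarski monsters), so ``killing enough of $G_i$'' is not available in general. Second, even when such quotients exist, the $Gr_*(6)$-condition is not stable under quotienting the factors: the completion $\overline\Gamma$ is rebuilt from the Cayley graphs of the new factors, new closed paths and new coincidences (hence longer essential pieces) appear, labels that were nontrivial in $*_{i\in I}G_i$ may become trivial, and non-isomorphic components may become isomorphic; nothing in your sketch controls this. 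Third, the hoped-for ``faithfulness lemma''---every classical piece is visible as a graphical piece---is false over a free product: an element of $G_i$ has many word representatives, so subwords of relators can coincide as group elements without coinciding in $\Gamma$, and any honest presentation of $G(\Gamma)$ over the free group on $\sqcup_{i\in I}S_i$ must also include all defining relators of the $G_i$, which satisfy no small cancellation condition at all. This is exactly why the paper does not reduce to Theorem~\ref{thm:c6}, but instead reruns the proof of Proposition~\ref{prop:main_classical} inside the free-product framework: diagrams over $\overline\Gamma$, essential pieces in place of pieces, and Lemma~\ref{lem:reduction} to absorb faces whose boundary labels are trivial in the free product.

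You also never use the two hypotheses that drive the actual proof in the way they are needed. Finiteness of the $16$ components combined with infiniteness of the free factors forces every edge of such a component of $\overline\Gamma$ to be an essential piece (no label-preserving automorphism group of a finite component can act transitively on an infinite attached $\Cay(G_i,S_i)$); this is the content of Lemma~\ref{lem:graphial_support}, which produces in each component vertices $x,y$ with no path between them a concatenation of at most two essential pieces, with $y$ interior to an attached Cayley graph, so that consecutive components can be chosen with disjoint supports. These ``halves of relators'' are then concatenated to define two elements that generate a free subgroup of rank $2$ with the congruence extension property, verified by the spherical-diagram curvature argument of Proposition~\ref{prop:main_classical}; SQ-universality follows from Olshanskii's criterion, not from Theorem~\ref{thm:c6} applied to a quotient. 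Your proposal contains no substitute for this step, so neither the independence you need in your step (2) nor the supply of usable relators in your step (3) is actually established.
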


Given a finitely generated group $G$, a finitely generated subgroup $H\leqslant G$ is \emph{undistorted} if the inclusion map $H\to G$ is a quasi-isometric embedding with respect to the corresponding word-metrics.

\begin{thm}[cf. Theorem~\ref{thm:undistorted}]\label{thm:intro_undistorted}
 Suppose the set of labels is finite. Let $\Gamma$ be a $C'(\frac{1}{6})$-labelled graph, or let $\Gamma$ be a $Gr'(\frac{1}{6})$-labelled graph whose components are finite. Then every cyclic subgroup of $G(\Gamma)$ is undistorted.
\end{thm}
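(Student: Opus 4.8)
The plan is to run a van Kampen diagram argument built on Greendlinger's Lemma, playing off the two competing descriptions of a power of $g$: one by a geodesic word and one by a \emph{periodic} word. First I would dispose of the easy reductions: finite cyclic subgroups are bounded, hence trivially undistorted, so assume $g$ has infinite order. Whether $\langle g\rangle$ is undistorted depends only on the conjugacy class of $g$, since conjugation by a fixed element changes word length by a bounded additive constant; so I may assume $g=\overline u$ where $u$ is cyclically reduced in the free group on $S$ and has minimal length among all words representing elements of the conjugacy class of $g$ in $G(\Gamma)$. Then $u$ is a geodesic word; set $\ell:=|u|$. Since $n\mapsto|g^n|$ is subadditive and bounded above by $n\ell$, it suffices to produce a constant $\mu=\mu(g)>0$ with $|g^n|\ge\mu n$ for all large $n$.

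Fix such an $n$, let $v$ be a geodesic word representing $g^n$, and choose a reduced (respectively $\Gamma$-reduced, in the graphical setting) van Kampen diagram $D$ over $G(\Gamma)$ for the word $vu^{-n}$. If $vu^{-n}$ is trivial in the free group then, as $u$ is cyclically reduced, $v=u^n$ and $|g^n|=n\ell$; otherwise, after the usual reductions and one further conjugation of $g$ if necessary, $D$ is a disk whose boundary cycle reads the cyclically reduced word $vu^{-n}$, and I write $\partial D=\alpha\beta$ with $\alpha$ reading $v$ and $\beta$ reading $u^{-n}$. The single-face case is handled directly: then $vu^{-n}$ is, cyclically, the label of a closed path in a component of $\Gamma$, and the $\ell$-periodicity of the subword $u^{-n}$ forces a piece in that label of relative length exceeding $\tfrac16$ once $n$ is large, contradicting the small cancellation hypothesis. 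So I may assume $D$ has at least two faces.

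The heart of the argument rests on two structural observations. First, no face $\Pi$ of $D$ can meet $\partial D$ in a connected arc of length more than $\tfrac12|\partial\Pi|$ contained in $\alpha$: such an arc would exhibit more than half of a relator cycle as a subword of $v$, which could then be shortened by substituting the strictly shorter complementary arc of the cycle, contradicting geodesicity of $v$. Second, if a face $\Pi$ follows $\beta$ along an arc of length at least $\ell+k$, then the initial and the shifted-by-$\ell$ portions of that arc read the same labelled path (because $u^{-n}$ is $\ell$-periodic) and so constitute a piece of $\partial\Pi$ of length $k$; here one uses that $u$ is cyclically reduced and, in the $Gr'$ case, finiteness of the component to rule out coincidences arising from label-preserving automorphisms. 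By the $\tfrac16$-condition, $k<\tfrac16|\partial\Pi|$, so any Greendlinger face along $\beta$ has boundary length less than $3\ell$. Combining these with Greendlinger's Lemma, or with Strebel's classification of such diagrams, all Greendlinger faces lie along $\beta$ or straddle the at most two points where $\alpha$ meets $\beta$, and $D$ acquires a ladder-like decomposition $\Pi_1,\dots,\Pi_k$ between $\alpha$ and $\beta$, inducing splittings $\alpha=\alpha_1\cdots\alpha_k$ and $\beta=\beta_1\cdots\beta_k$ with $\alpha_i,\beta_i\subseteq\partial\Pi_i$. For faces with $|\beta_i|$ large, the bound on the two interior arcs of $\Pi_i$ together with the piece estimate gives $|\alpha_i|\ge|\beta_i|$; the remaining faces all use relators of length less than $3\ell$, of which there are only finitely many since $S$ is finite, so the maximal sub-ladders of short faces are van Kampen diagrams over a finitely presented $C'(\tfrac16)$-group $G_0$. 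Since $G_0$ is hyperbolic its cyclic subgroups are undistorted, giving a linear lower bound for the $G_0$-lengths of sub-segments of $u^{-n}$, hence for the contributions of these sub-ladders to $|\alpha|$. Summing the long-face and short-sub-ladder contributions along $\alpha$ yields $|g^n|=|\alpha|\ge\mu n$.

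The hard part will be the interaction between the geodesic side $\alpha$ and the side $\beta$, which is periodic but is \emph{not} known to be geodesic — indeed, that $u^n$ is quasigeodesic is essentially the conclusion being sought. This is exactly what forces the two-tier analysis above: periodicity of $\beta$ confines all the substantive combinatorics to relators of length $O(\ell)$, where one can fall back on hyperbolicity of the finitely presented $C'(\tfrac16)$-group that they generate. Making the ladder decomposition and the bookkeeping of error terms rigorous — in particular bounding the number of alternations between long faces and short sub-ladders and controlling the short interior arcs separating them — is the technical crux, as is verifying in the graphical setting that the piece produced in the second observation is genuine, which is where finiteness of the components is used.
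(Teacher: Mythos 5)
The decisive gap is the ladder decomposition. Your two structural observations give: overlaps of a face with the geodesic side $\alpha$ are at most half its boundary, and an overlap with the periodic side $\beta$ exceeds $\ell=|u|$ only by a piece, so a Greendlinger-type face on $\beta$ has boundary length $<3\ell$. But this does \emph{not} exclude faces whose exterior arc in $\beta$ is more than half their boundary (interior degree $3$); it only says such faces are short, and they genuinely occur whenever some relator contains a subword of $\overline w$ exceeding half its length. Strebel's shape-$I_1$ classification (Lemma~\ref{lem:strebel}) needs interior degree $\geqslant 4$ for exterior-degree-one faces on \emph{both} sides, so without eliminating these faces you cannot conclude that $D$ is a ladder, and the splittings $\alpha=\alpha_1\cdots\alpha_k$, $\beta=\beta_1\cdots\beta_k$ with $\alpha_i,\beta_i\subseteq\partial\Pi_i$ need not exist: nothing yet prevents multi-layered diagrams with faces that never touch $\alpha$. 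Eliminating them is exactly where the paper works hardest (case 2b): it fixes the path $\sigma$ of Lemma~\ref{lem:sigma}, shows in Lemma~\ref{lem:consecutive} that for consecutive special faces the excess of one overlap over $|w|$ \emph{together with the shared interior arc} is a single essential piece, deduces that consecutive such faces cannot both have interior degree $3$ (Corollary~\ref{cor:interior_degree3}), and then removes the remaining degree-$3$ faces and derives a contradiction with geodesicity of $g_n$. You explicitly defer all of this as ``the technical crux'', so the argument is incomplete at its central point. Relatedly, the lift-uniqueness underlying Lemma~\ref{lem:sigma} needs $u$ not to be a proper power; minimality of $u$ within the conjugacy class of $g$ does not give this, which is why the paper takes $g$ shortest among all elements having a power conjugate to $h$.

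Your fallback for the short faces is also unsound in the graphical case, which is the case of interest. The finitely many labels of simple closed paths of $\Gamma$ of length $<3\ell$, read as a classical presentation $G_0$, need not satisfy $C'(\frac{1}{6})$: two short simple closed paths in the same component of $\Gamma$ may share a long arc that is not a piece of $\Gamma$ (it occurs only once in $\Gamma$) but becomes a piece once the two cycles are separated into distinct relators; and the subgraph of $\Gamma$ spanned by the short cycles need not be finite, so Ollivier's hyperbolicity theorem does not apply either. Hence hyperbolicity of $G_0$, and with it undistortion of $\langle u\rangle$ in $G_0$, is unjustified precisely where the theorem goes beyond the classical $C'(\frac{1}{6})$ case already covered by cubulation. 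Even granting it, you would still need the ladder structure above, plus control of the interior arcs bounding the short sub-clusters (these are pieces of neighbouring long faces and may be long in absolute terms). The paper avoids any auxiliary hyperbolic group: after establishing shape $I_1$ it uses the uniform bound $C_0$ on paths of $\Gamma$ labelled by subwords of $\overline w$ to get $|g_n|>n|w|/C_0$, and treats the case where no such bound exists separately (case 1, via convex geodesics).
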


Theorem~\ref{thm:intro_undistorted} also extends to graphical small cancellation presentations over free products under additional assumptions on the free factors, see Theorem~\ref{thm:undistorted_product}.

Every group has a $Gr'(\frac{1}{6})$-presentation given by its labelled Cayley graph \cite[Example 2.2]{Gru}. Therefore, the restriction that components are finite in the $Gr'(\frac{1}{6})$-case of Theorem~\ref{thm:intro_undistorted} is necessary. If $\Gamma$ is finite, then $G(\Gamma)$ is hyperbolic \cite{Oll}, and the statement is classical. For infinite classical $C'(\frac{1}{6})$-presentations, the fact that cyclic subgroups are undistorted can be deduced from the facts that every infinitely presented classical $C'(\frac{1}{6})$-group acts properly on a CAT(0) cube complex \cite{AO} and that every group that acts properly on a CAT(0) cube complex has no distorted cyclic subgroups \cite{Hag}. Since there exist graphical $C'(\frac{1}{6})$-groups with property~(T) \cite{Gro,Sil}, an argument using CAT(0) cube complexes cannot be extended to graphical presentations.

\begin{thm}[cf. Theorem \ref{thm:uncountable}]\label{thm:intro_uncountable}
 Given $p\in\N$, there exist uncountably many pairwise non-quasi-isometric finitely generated groups $(G_i)_{i\in I}$ such that:
 \begin{itemize}
  \item Every $G_i$ admits a classical $C(p)$-presentation with a finite generating set.
  \item No $G_i$ is isomorphic to any group defined by a $C'(\frac{1}{6})$-labelled graph with a finite set of labels.
  \item No $G_i$ is isomorphic to any group defined by $Gr'(\frac{1}{6})$-labelled graph whose components are finite with a finite set of labels.
 \end{itemize}
\end{thm}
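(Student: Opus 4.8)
The plan is to build the uncountably many groups $G_i$ as infinitely presented classical $C(p)$-groups over a fixed finite generating set, parametrized by subsets of $\N$, and to show that the two non-existence clauses follow from results already available in this paper, in particular Theorem~\ref{thm:intro_undistorted}. So the construction must simultaneously produce (a) enough ``freedom'' to get uncountably many quasi-isometry types, and (b) an arithmetic feature that is detectable quasi-isometrically, yet (c) is forbidden for any group admitting a $C'(\tfrac16)$- or finite-component-$Gr'(\tfrac16)$-presentation. The natural candidate for (c) is \emph{distortion of a cyclic subgroup}: by Theorem~\ref{thm:intro_undistorted}, a group defined by a $C'(\tfrac16)$-labelled graph (or a $Gr'(\tfrac16)$-labelled graph with finite components), over a finite label set, has all cyclic subgroups undistorted; so if each $G_i$ visibly contains a distorted cyclic subgroup, then no $G_i$ can be isomorphic to such a group. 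Thus the target is: for each $i$, a finitely generated group $G_i$ with a classical $C(p)$-presentation that contains an exponentially (or otherwise super-linearly) distorted element, with the distortion ``rates'' varying enough over $i\in I$ to give uncountably many distinct quasi-isometry classes.

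The concrete recipe I would use is a small-cancellation analogue of the classical Baumslag--Solitar-type distortion, engineered to satisfy $C(p)$. Fix a large integer $n$ (depending on $p$) and work with generators $a,b$. For a strictly increasing sequence $A=(n_k)_{k\ge1}$ of naturals, take relators of the form $w_k = b^{-1} a^{\mu_k} b a^{-\lambda_k} u_k$ where the $u_k$ are long ``padding'' words in $b$ (or in fresh syllables) chosen so that the whole relator set $R_A=\{w_k\}$ satisfies the classical $C(p)$ condition: by making the padding words pairwise share only short common subwords and long compared to the ``BS-core'' $b^{-1}a^{\mu_k}ba^{-\lambda_k}$, every piece is a bounded fraction $<\tfrac1{p}$ of (for the metric check at the end we only need $C(p)$, which is combinatorial). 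The point of the relator is that in $G_A$ one has a relation roughly conjugating $a^{\lambda_k}$ to $a^{\mu_k}$, so a suitable power of $a$ is expressible by a word of length $O(\sum\log)$ while its natural length is exponential; choosing the exponents $\lambda_k,\mu_k$ to grow along the sequence forces a prescribed super-linear distortion function for $\langle a\rangle$, and genuinely different sequences $A$ give genuinely different distortion growth, hence non-quasi-isometric groups. (For the $C(p)$ with $p$ arbitrary rather than just $p=6$, one simply lengthens the padding words; the small-cancellation verification is routine once the pieces are controlled.)

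The steps, in order, would be: (1) For each subset $I_0\subseteq\N$ (or each sequence from a perfect set of sequences) define the relator set $R_{I_0}$ and check the classical $C(p)$ small cancellation condition — pure combinatorics on pieces, using that the padding words are long and pairwise nearly disjoint. (2) Using Greendlinger's lemma / the Dehn presentation available for $C(p)$-groups (or directly the asphericity machinery recalled in the introduction), compute a lower bound for the word-length in $G_{I_0}$ of the distorted powers of $a$, and an upper bound coming from iterating the relators, to pin down the distortion function of $\langle a\rangle$ up to the relevant equivalence. (3) Show that distinct parameters give non-quasi-isometric groups: the distortion function of a cyclic subgroup is \emph{not} a quasi-isometry invariant on the nose, so I would instead extract from the sequence $A$ a genuine QI-invariant — e.g. the set of ``scales'' at which the divergence/Dehn function jumps, or more robustly embed the data into the asymptotic behaviour of the Dehn function, using that along a lacunary sequence these invariants separate uncountably many groups (this is exactly the kind of argument behind the Bowditch and Thomas--Velickovic examples cited in the introduction, and I would mirror it). (4) Conclude the two non-existence clauses from Theorem~\ref{thm:intro_undistorted}: each $G_{I_0}$ has a distorted cyclic subgroup $\langle a\rangle$, so it is not isomorphic to any group defined by a $C'(\tfrac16)$-labelled graph with finite label set, nor by a $Gr'(\tfrac16)$-labelled graph with finite components over a finite label set, since all such groups have only undistorted cyclic subgroups; and quasi-isometry invariance is not even needed here, only the isomorphism-type statement as phrased.

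The main obstacle I expect is step~(3): arranging the parameters so that uncountably many of the resulting $C(p)$-groups are pairwise \emph{non-quasi-isometric}, not merely non-isomorphic. Getting non-isomorphism from distinct distortion data is easy; upgrading to non-quasi-isometry requires isolating a QI-invariant that still sees the sequence. The cleanest route is probably to make the relator lengths grow lacunarily and show that the large-scale geometry (e.g. the Dehn function's behaviour at the scales dictated by $A$, or the set of asymptotic cones, as in Thomas--Velickovic) records enough of $A$; the small-cancellation geometry of $C(p)$-presentations (bounded overlap of relators, asphericity) is precisely what makes such an analysis tractable, but carrying it out carefully — controlling both upper and lower bounds on the relevant invariant — is the technical heart of the proof. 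Simultaneously one must keep the construction inside $C(p)$ for the \emph{prescribed} $p$ while still having room for the growing exponents, which constrains how fast the padding must grow; balancing these two requirements is the part that needs the most care.
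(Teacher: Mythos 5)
Your overall strategy for the two non-existence clauses is exactly right and matches the paper: exhibit a distorted cyclic subgroup in each $G_i$ and invoke Theorem~\ref{thm:intro_undistorted}. The genuine gap is your step~(3), which you yourself flag as the technical heart and then only sketch. Producing uncountably many \emph{pairwise non-quasi-isometric} groups by encoding the sequence $A$ into distortion data does not work on the nose (distortion of a cyclic subgroup is not a QI invariant, as you note), and the proposed repair --- reading off $A$ from the Dehn function at lacunary scales or from asymptotic cones ``as in Bowditch/Thomas--Velickovic'' --- is itself a substantial theorem that you would have to prove for your specific padded BS-type presentations, while simultaneously checking that the distortion-producing relators do not contaminate whatever invariant you use. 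Nothing in the rest of your argument supplies this. There is also a secondary soft spot in step~(1): with relators $b^{-1}a^{\mu_k}ba^{-\lambda_k}u_k$ and padding $u_k$ a word in $b$ alone, the padding is covered by very few pieces (long $b$-powers occurring in several relators are single pieces), so the non-metric $C(p)$ count can fail for large $p$; you need padding that alternates letters with growing exponents, essentially as in Example~\ref{example:distorted} of the paper.

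The paper sidesteps your step~(3) entirely: it constructs a \emph{single} infinitely presented $C(p)$-group $G$ with a distorted cyclic subgroup (Example~\ref{example:distorted}), observes that $G$ is one-ended (torsion-free, not free, Stallings), takes Bowditch's uncountable family $\{H_i\}$ of pairwise non-quasi-isometric one-ended groups with classical $C'(\frac{1}{p-1})$-presentations, and sets $G_i:=G*H_i$. Pairwise non-quasi-isometry then follows from the Papasoglu--Whyte classification of free products of one-ended groups, the $C(p)$-condition is automatic because the free factors use disjoint alphabets, and the distorted cyclic subgroup of $G$ survives in every $G_i$. If you want to salvage your approach, replacing your step~(3) by this ``fixed distorted factor $*$ ready-made uncountable QI-family'' trick is the missing idea; as written, your proof is incomplete at precisely the point where uncountability and non-quasi-isometry must be established.
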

To prove Theorem~\ref{thm:intro_uncountable}, we construct infinitely presented classical $C(p)$-groups with distorted cyclic subgroups and apply Theorem~\ref{thm:intro_undistorted}. We obtain uncountably many examples using a construction of Bowditch \cite{Bow}.

\subsection{Acknowledgements}
 The author thanks Goulnara Arzhantseva, Christopher Cashen, and Markus Steenbock for helpful comments and discussions.
\section{SQ-universality of classical $C(6)$-groups}\label{section:classical}
In this section, we prove the SQ-universality of infinitely presented classical $C(6)$-groups using the following definition and result from \cite{Ols}:
\begin{defi}[\cite{Ols}] Let $G$ be a group and $F$ a subgroup. Then $F$ has the \emph{congruence extension property} (\emph{CEP}) if for every normal subgroup $N$ of $F$ (i.e. $N$ is normal in $F$), we have $\langle N\rangle ^G\cap F=N$, where $\langle N\rangle^G$ denotes the normal closure of $N$ in $G$. The group $G$ has \emph{property $F(2)$} if there exists a subgroup $F$ of $G$ that is a free group of rank $2$ and that has the CEP.
\end{defi}
\begin{prop}[\cite{Ols}] If a group $G$ has property $F(2)$, then $G$ is SQ-universal.
\end{prop}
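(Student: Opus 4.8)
The plan is to combine the classical Higman--Neumann--Neumann embedding theorem with the congruence extension property. Fix a subgroup $F\leqslant G$ that is free of rank $2$ and has the CEP, and let $C$ be an arbitrary countable group. By the HNN embedding theorem, $C$ embeds into some $2$-generated group; since $F$ is free of rank $2$, every $2$-generated group is a quotient of $F$, so this $2$-generated group has the form $F/N$ for a suitable normal subgroup $N\trianglelefteq F$. Thus we obtain an embedding $C\hookrightarrow F/N$.

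The second step is to pass to the quotient $Q:=G/\langle N\rangle^G$ of $G$. The composite homomorphism $F\hookrightarrow G\twoheadrightarrow Q$ has kernel $\langle N\rangle^G\cap F$, and applying the CEP to the normal subgroup $N$ of $F$ gives $\langle N\rangle^G\cap F=N$. Hence $F/N$ embeds into $Q$. Composing with the embedding $C\hookrightarrow F/N$ from the first step yields an embedding of $C$ into the quotient $Q$ of $G$. Since $C$ was an arbitrary countable group, this is precisely the statement that $G$ is SQ-universal.

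The only place where a hypothesis is genuinely used is the invocation of the CEP in the second step: a priori the normal closure $\langle N\rangle^G$ could intersect $F$ in a subgroup strictly larger than $N$, which would collapse $F/N$ inside $Q$ and destroy the embedding; the CEP is exactly what prevents this. Everything else is the standard embedding of countable groups into $2$-generated groups, so I do not expect any real obstacle here — the proposition is essentially a repackaging of those two ingredients, and the proof is short.
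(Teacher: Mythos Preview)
Your argument is correct: the Higman--Neumann--Neumann embedding reduces the problem to realizing an arbitrary $2$-generated group as a subgroup of a quotient of $G$, and the CEP for $F$ is precisely what guarantees that $F/N\hookrightarrow G/\langle N\rangle^G$. There is nothing to add.

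As for comparison with the paper: note that the paper does not actually prove this proposition but merely cites it from \cite{Ols}; your proof is the standard one and is essentially what appears in that reference.
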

We show:
\begin{thm}\label{thm:c6}
 Let $G=\langle S\mid R\rangle$ be a $C(6)$-presentation, where $S$ is finite and $R$ is infinite. Then $G$ has property $F(2)$.
\end{thm}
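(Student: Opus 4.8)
The plan is to construct an explicit rank-$2$ free subgroup $F$ of $G$ and show it has the CEP, then invoke the Proposition. Since $R$ is infinite and $S$ is finite, there are relators of unbounded length; pick a relator $r\in R$ that is long, and look for two "independent" subwords of $r$ (or of two distinct relators) whose freely reduced images generate a free subgroup of rank $2$. A clean choice is to fix a relator $r$, write it as a cyclically reduced word, and take two disjoint subpaths $u,v$ along the cyclic word that are long compared to the maximal piece length; by the $C(6)$-condition the pieces are short relative to $|r|$, so $u$ and $v$ overlap relators only in pieces, and this forces $\langle u,v\rangle$ to be free of rank $2$ inside $G$ via a standard small cancellation / van Kampen diagram argument (no reduced diagram can witness a nontrivial relation among $u,v$ without producing a disc with few pieces on its boundary, contradicting $C(6)$).

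The heart of the matter is the CEP: given $N\trianglelefteq F$, I must show $\langle N\rangle^G\cap F=N$. The inclusion $\supseteq$ is trivial, so the work is $\subseteq$: if $w\in F$ lies in the normal closure of $N$ in $G$, then $w\in N$. Here is where asphericity of the $C(6)$-presentation enters, as advertised in the introduction. I would take a van Kampen diagram $D$ over the presentation $\langle S\mid R\cup N\rangle$ (thinking of $N\subseteq F\subseteq G$ as a set of extra relators) with boundary word $w$, and argue that the "old" $R$-cells can be pushed off or cancelled, leaving a diagram showing $w$ is a consequence of $N$ alone inside $F$. The asphericity of the $C(6)$-presentation — equivalently, the fact that reduced diagrams over $R$ have a combinatorial $(3,6)$ or $(4,4)$ curvature structure — is what prevents the $R$-cells from "tangling" with the $F$-structure; the chosen subwords $u,v$ being separated by long arcs in a single relator means any $R$-cell meeting the boundary does so along a short piece and cannot connect two $N$-cells in an essential way.

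Concretely, the key steps in order are: (1) choose the relator $r$ and the two subwords $u,v$, verifying $|u|,|v|$ exceed the piece bound so that $F=\langle u,v\rangle$ is free of rank $2$; (2) prove $F$ is a subgroup with the expected geometry, i.e. a geodesic/quasi-geodesic combinatorial statement controlling how paths labelled by words in $u,v$ sit in van Kampen diagrams over $R$; (3) take a minimal van Kampen diagram over $R\cup N$ for a word $w\in F\cap\langle N\rangle^G$ and perform diagram surgery, using asphericity of the $C(6)$-presentation to remove or isolate the $R$-subdiagrams; (4) read off from the resulting $N$-only diagram that $w\in N$. The main obstacle I anticipate is step (3): making the diagram surgery rigorous requires a careful analysis of how an $R$-cell's boundary interacts with the boundary arcs labelled $u^{\pm1},v^{\pm1}$ and with $N$-cells, and showing no configuration of $R$-cells can survive that would let $w$ escape $N$. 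This is precisely where one must exploit that pieces are short relative to $|r|$ (so a single $R$-cell cannot "reach across" from one occurrence of $u$ or $v$ to another) together with the nonpositive-curvature combinatorics of $C(6)$-diagrams, rather than any hyperbolicity.
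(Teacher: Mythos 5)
Your step (1) rests on a false premise: the $C(6)$-condition is the \emph{non-metric} condition. It says only that no relator is a product of fewer than $6$ pieces; it does \emph{not} say that pieces are short relative to $|r|$ --- that is the metric $C'(\frac{1}{6})$-condition, and the entire difficulty of this theorem (as opposed to the $C(7)$ or $C'(\frac{1}{6})$ cases) is precisely that no such length control is available. With $S$ finite and $R$ infinite there is in general no bound at all on piece length, so ``subwords long compared to the maximal piece length'' need not exist, and all of your subsequent length-based reasoning (``a single $R$-cell cannot reach across\dots'') collapses. The correct currency is piece \emph{count}, not piece length. Moreover, taking both generators from a single relator $r$ is structurally wrong: if $r$ is a product of exactly $6$ pieces, two disjoint subpaths each requiring at least $3$ pieces must be complementary arcs of $\gamma_r$, and then $uv$ is conjugate to $r^{\pm1}$, so $u,v$ satisfy a relation in $G$ and cannot generate a free group of rank $2$.

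The paper's proof is built quite differently, and the differences are exactly what your sketch is missing. One first extracts (Lemma~\ref{lem:find}) sixteen pairwise distinct relators $r_1,\dots,r_{16}$ together with vertices $x_k,y_k\in\gamma_{r_k}$ at piece distance at least $3$, with an additional \emph{support-disjointness} condition $\supp(y_k)\cap\supp(x_{k+1})=\emptyset$ to forbid free cancellation; each generator $\alpha_i$ is then a product of \emph{eight} such blocks, one from each of eight distinct relators. The CEP is not obtained by ``pushing $R$-cells off'' an $(R\cup N)$-diagram --- no such surgery is carried out, and you give no mechanism for it. Instead, a minimal diagram over $\langle S,\alpha\mid L,W,R\rangle$ is closed up to a sphere, each $\alpha$/$L$-face is replaced by a wheel of cones over its blocks, and the combinatorial curvature formula (Lemma~\ref{lem:curvature}) is violated: interior block endpoints have degree $\geqslant 3$ because of the support condition, cones coming from interior (resp.\ boundary) blocks have degree $\geqslant 5$ (resp.\ $\geqslant 4$), and the apex contribution $3-\frac{3k}{8}\leqslant 0$ is nonpositive precisely because $k\geqslant 8$, i.e.\ because each generator was made from eight blocks. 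Your appeal to asphericity names the right philosophical ingredient but supplies none of this quantitative design, and without it step (3) of your plan cannot be made to work.
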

The proof of Theorem~\ref{thm:c6} in fact does not require a finite generating set and only a sufficient (finite) number of relators. The details of this are explained in Remark~\ref{remark:finite_presentation}.

\subsection{Classical small cancellation theory}
We begin with definitions and tools from classical small cancellation theory. The following definition of the classical $C(6)$ small cancellation condition is given in, for example, \cite[Chapter V]{LS}. Given a set $S$, $M(S)$ denotes the free monoid on $S\sqcup S^{-1}$. ``$\equiv$'' denotes letter-by-letter equality.
\begin{defi}
 Let $\langle S\mid R\rangle$ be a presentation, where $R\subseteq M(S)$. The set of relators $R$ is \emph{symmetrized} if all its elements are cyclically reduced and for every $r\in R$, all cyclic conjugates and their inverses of $r$ are in $R$.
 Given a symmetrized presentation $\langle S\mid R\rangle$, a \emph{piece} with respect to $\langle S\mid R\rangle$ is a word $u\in M(S)$ such that there exist $r\not\equiv r'$ in $R$ with $r\equiv uv$ and $r'\equiv uv'$ for words $v,v'$. We say that a presentation satisfies the \emph{$C(6)$-condition} if
 \begin{itemize}
  \item the set of relators is symmetrized and
  \item no relator is a product of fewer than 6 pieces.
 \end{itemize}
\end{defi}

As usual in small cancellation theory, we will translate problems about group presentations into diagrams. The notion of diagrams we use coincides with the standard notion of van Kampen diagrams as in \cite[Chapter V]{LS}, except that we allow faces whose boundary labels are not freely reduced. 

A \emph{diagram} $D$ is a finite, simply connected 2-complex with a fixed embedding into the plane or onto the 2-sphere such that the image of every 1-cell, called \emph{edge}, has an orientation and a label from a fixed set $S$. A \emph{singular disk diagram} is a diagram with a fixed embedding in the plane. A \emph{simple disk diagram} is a singular disk diagram homeomorphic to the 2-disk. The \emph{boundary} of a singular disk diagram $D$, denoted $\partial D$, is its topological boundary inside $\R^2$. A \emph{simple spherical diagram} is a diagram with a fixed homeomorphism onto the 2-sphere. A simple spherical diagram has empty boundary. 

Throughout this article, all \emph{paths} are edge-paths. Given a path $p$ in $D$, we denote by $\ell(p)$ its label in $M(S)$, the free monoid on $S\sqcup S^{-1}$, where the label of an edge $e$ is given exponent $+1$ if $e$ is traversed in its direction and exponent $-1$ if it is traversed in the opposite direction. We also denote by $\partial D$ the closed path traversing $\partial D$ (where we choose basepoint and orientation). The \emph{boundary label} of $D$ is the word read on $\partial D$. A \emph{face} $\Pi$ in a diagram is the image of a closed 2-cell $b$ under its characteristic map. The \emph{boundary cycle} $\partial \Pi$ is the image of the boundary cycle of $b$ (where we choose basepoint and orientation), and the \emph{boundary word} of $\Pi$ is the label of $\partial \Pi$.

An \emph{arc} in $D$ is a nontrivial path in $D$ whose interior vertices have degree 2 and whose initial and terminal vertex have degree different from 2. A \emph{spur} is an arc containing a vertex of degree 1. Given a path $p$, we denote by $\iota p$ its initial and by $\tau p$ its terminal vertex. An \emph{interior} edge is an edge not contained in $\partial D$. An interior face is a face whose boundary consists of interior edges. A face that is not interior is a \emph{boundary} face.

Given a presentation $\langle S\mid R\rangle$, where $R$ is a subset of $M(S)$, a \emph{diagram over} $\langle S\mid R\rangle$ is a diagram where every face has a boundary word in $R$. The following is a version of van Kampen's Lemma \cite[Section V.1]{LS}. Given $w\in M(S)$, a singular disk diagram \emph{for $w$} is a singular disk diagram whose boundary label is $w$.

\begin{thm} Let $G$ be given by the presentation $\langle S\mid R\rangle$. Then $w\in M(S)$ satisfies $w=1$ in $G$ if and only if there exists a singular disk diagram over $\langle S\mid R\rangle$ for $w$.
\end{thm}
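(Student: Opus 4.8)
The plan is to prove the two implications separately. For ``a diagram exists $\Rightarrow w=1$ in $G$'' I would argue by induction, peeling faces off the given diagram. For ``$w=1$ in $G$ $\Rightarrow$ a diagram exists'' I would assemble a diagram by hand from an expression of $w$ as a product of conjugates of relators and then adjust its boundary label.

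So suppose first that $D$ is a singular disk diagram over $\langle S\mid R\rangle$ with boundary label $w$, and induct on the number of faces of $D$. If $D$ has no faces it is a finite simply connected graph, hence a tree; then $\partial D$ runs over every edge once in each direction, so $w\equiv\ell(\partial D)$ freely reduces to the empty word and $w=1$ already in $F(S)$. If $D$ has a face, use the standard planarity fact that some face $\Pi$ of $D$ shares an edge with $\partial D$. Take the basepoint of $D$ to be the initial vertex of a maximal arc $\gamma$ along which $\partial\Pi$ and $\partial D$ agree, and delete the open face $\Pi$ together with the interior of $\gamma$; this yields a singular disk diagram $D'$ over $\langle S\mid R\rangle$ with one fewer face and with $\ell(\partial D)=\tilde r^{\pm1}\,\ell(\partial D')$ in $F(S)$, where $\tilde r$ is a cyclic conjugate of $\ell(\partial\Pi)\in R$. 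Since $\tilde r^{\pm1}=1$ in $G$ and $\ell(\partial D')=1$ in $G$ by induction, $w=\ell(\partial D)=1$ in $G$.

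Conversely, suppose $w=1$ in $G$, so that $w=\prod_{i=1}^{n}g_i r_i^{\varepsilon_i}g_i^{-1}$ in $F(S)$ for suitable $g_i\in M(S)$, $r_i\in R$ and $\varepsilon_i\in\{+1,-1\}$. I would first build the ``lollipop diagram'' $D_0$: at a common basepoint $x_0$ wedge, in a fixed cyclic order, subdiagrams $L_1,\dots,L_n$, where $L_i$ is a path (``stick'') labelled $g_i$ whose far endpoint carries a $2$-cell (``candy'') with boundary label $r_i$, the $2$-cell oriented so that traversing $\partial L_i$ reads $g_i r_i^{\varepsilon_i}g_i^{-1}$. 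The candy is a legitimate face because its boundary label is $r_i\in R$ --- note it is immaterial whether $r_i$ is reduced --- and $D_0$ is a singular disk diagram over $\langle S\mid R\rangle$ whose boundary label equals, \emph{letter by letter}, $\prod_{i=1}^{n}g_i r_i^{\varepsilon_i}g_i^{-1}$. This word equals $w$ in $F(S)$, hence is joined to $w$ by a finite chain of elementary free expansions and reductions, and it remains to realize each step by a diagram modification keeping the boundary label in step: an expansion is realized by attaching a spur, and a reduction $\alpha s s^{-1}\beta\to\alpha\beta$ (with $s\in S\sqcup S^{-1}$) is realized by folding together the two consecutive boundary edges carrying the cancelling pair, deleting the resulting spur, and pruning any subdiagram thereby cut off from the boundary --- operations that each keep the complex a singular disk diagram over $\langle S\mid R\rangle$, possibly lowering the number of faces. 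The end of the chain is the desired singular disk diagram for $w$.

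The hard part will not be any single computation but the handful of classical planarity lemmas both directions rest on: that a planar simply connected complex containing a $2$-cell has a $2$-cell meeting its boundary, and that an elementary free reduction of the boundary label is realized by fold-and-prune moves without destroying planarity or simple connectedness. These are routine but genuinely use the topology of the plane, and, together with the precise boundary-word bookkeeping indicated above, they are carried out in \cite[Section~V.1]{LS}; I would either quote them or include short self-contained proofs.
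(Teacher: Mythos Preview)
The paper does not prove this theorem; it is stated as the classical van Kampen Lemma with a citation to \cite[Section~V.1]{LS} and nothing more. Your sketch is correct and is precisely the standard argument given there---the lollipop construction followed by boundary folding for one direction, and face-peeling induction for the other---so there is no alternative approach in the paper to compare against.
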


The following is proven in \cite[Chapter 4, \S 11.6]{Ols89}. 

\begin{lem}\label{lem:reduction}
 Let $\langle S\mid R\rangle$ be a presentation, let $R'\subseteq R$, and let $D$ be a singular disk diagram over $\langle S\mid R\rangle$. Let $\Pi_1$ and $\Pi_2$ be faces in $D$ that share a vertex $v$. 
 \begin{itemize}
  \item If $\Pi_1=\Pi_2$, assume that, for some choice of orientation and basepoint, $\ell(\Pi_1)$ is freely equal to an element of the normal closure of $R'$ in $F(S)$. 
  \item If $\Pi_1\neq \Pi_2$, assume that $\ell(\Pi_1)\ell(\Pi_2)$, read from $v$ in counterclockwise direction, is freely equal to an element of the normal closure of $R'$ in $F(S)$.
 \end{itemize} 
 Then there exists a diagram $D'$ containing faces $f_1,f_2,\dots,f_k$, $k\geqslant 0$, such that each $f_i$ has a boundary word in $R'$, such that $D'$ has the same boundary word as $D$, and such that there exists an injection $f: \mathrm{faces}(D')\setminus\{f_1,f_2,\dots,f_k\} \to \mathrm{faces}(D)\setminus \{\Pi_1,\Pi_2\}$ such that the labels of $\partial\Pi$ and $f(\partial\Pi)$ coincide for each $\Pi\in\mathrm{faces}(D')\setminus\{f_1,f_2,\dots,f_k\}$.
\end{lem}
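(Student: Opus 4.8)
The plan is to deduce the lemma from van Kampen's Lemma by a surgery that excises $\Pi_1$ --- and, in the second case, also $\Pi_2$ --- from $D$ and fills the resulting hole by a diagram over $\langle S\mid R'\rangle$. First I would fix the word to be killed: set $w:=\ell(\Pi_1)$ if $\Pi_1=\Pi_2$, and $w:=\ell(\Pi_1)\ell(\Pi_2)$, read from $v$ in counterclockwise direction, if $\Pi_1\neq\Pi_2$. By hypothesis $w$ is freely equal to an element of the normal closure of $R'$ in $F(S)$; since that normal closure is invariant under conjugation, the same holds for every conjugate of $w$, so all of these represent $1$ in the group $\langle S\mid R'\rangle$.

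Next comes the surgery. If $\Pi_1=\Pi_2$, I would remove the open $2$-cell of $\Pi_1$ from $D$; what remains is a $2$-complex with a hole whose boundary is a closed path $\gamma$ with $\ell(\gamma)=w$. If $\Pi_1\neq\Pi_2$, I would first use Olshanskii's $0$-refinement near $v$ to reduce to the situation in which $\Pi_1$ and $\Pi_2$ share an edge $e$ at $v$ --- inserting, if necessary, a trivially labelled edge in the corner at $v$ between $\partial\Pi_1$ and $\partial\Pi_2$ picked out by the chosen reading, which changes neither the group, nor the genuine faces of $D$, nor its boundary word. I would then delete the open $2$-cells of $\Pi_1$ and $\Pi_2$ together with the open edge $e$, leaving a $2$-complex with a single hole bounded by a closed path $\gamma$; the label $\ell(\gamma)$ represents a conjugate of $w$ in $F(S)$, hence again represents $1$ in $\langle S\mid R'\rangle$. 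In either case, van Kampen's Lemma now provides a singular disk diagram $E$ over $\langle S\mid R'\rangle$ with boundary label $\ell(\gamma)$; glue $E$ into the hole by the label- and orientation-preserving identification of $\partial E$ with $\gamma$, and let $D'$ be the result. (When $\ell(\gamma)$ is freely trivial, $E$ is empty; in the two-face case this recovers the classical removal of a cancellable pair of faces.)

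It then remains to read off the conclusion. Since the surgery is local --- it replaces a topological disk region, bounded by $\gamma$, with a disk diagram having the same boundary label --- the complex $D'$ is again a finite, simply connected $2$-complex embedded in the plane (or sphere), with the same boundary word as $D$: simple connectivity is preserved because a hole has been filled by a simply connected piece. The faces of $D'$ are the faces $f_1,\dots,f_k$ of $E$, each with boundary word in $R'$, together with all faces of $D$ other than $\Pi_1$ and $\Pi_2$; taking $f$ to be the identity on the latter gives the required boundary-label-preserving injection, and every face of $D'$ then has boundary word in $R$, so $D'$ is a diagram over $\langle S\mid R\rangle$.

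The hard part --- and the reason one ultimately cites \cite[Chapter~4, \S11.6]{Ols89} instead of writing it all out --- is making the surgery legitimate in full generality. The hole left by deleting a face need not be an embedded disk, because face boundary words are permitted to be non-reduced and because $\Pi_1$ and $\Pi_2$ may share more than the vertex $v$ or may meet $\partial D$; and, crucially, in the two-face case the two holes must be amalgamated into a \emph{single} hole along an edge, since a naive wedge at the point $v$ would fail: only the \emph{product} $\ell(\Pi_1)\ell(\Pi_2)$, not either factor on its own, is known to die in $\langle S\mid R'\rangle$, so such a complex could retain a noncontractible loop. Managing precisely these degeneracies, uniformly, is exactly what Olshanskii's $0$-refinement technique is designed for, which is why we are content to invoke it.
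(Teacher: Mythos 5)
Your proposal is correct and follows essentially the same route as the paper: the paper simply cites \cite[Chapter 4, \S 11.6]{Ols89} for this lemma (devoting its own text only to removing the $0$-faces that Olshanskii's construction leaves behind), and your sketch of the excise-and-refill surgery, including the correct observation that the two faces must be amalgamated along an edge rather than wedged at $v$, is precisely the content delegated to that reference. Since you explicitly defer the degenerate cases to Olshanskii's $0$-refinement, your argument and the paper's rest on the same citation and contain no gap beyond what is acknowledged and outsourced.
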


In the particular case that $R'=\emptyset$, the above lemma states that adjacent faces with freely inverse labels can be removed. 

We remark to the interested reader that the proof in \cite{Ols89} uses certain 0-faces (we do not use the terminology ``0-cells'' to avoid confusion), and the diagram $D'$ in \cite{Ols89} contains such 0-faces. These 0-faces are faces with labels $xsys^{-1}$, where $s\in S$ and $x$ and $y$ are powers of a symbol 1 that does not lie in $S\sqcup S^{-1}$ and that denotes the identity in $F(S)$. Using the following steps, which are best interpreted as operations on a planar graph, we can iteratively remove all 0-faces to obtain the statement above. Note that each of the operations does not alter the boundary word of $D'$ or of any $R$-face (unless the face is removed), and it does not alter the fact that $D'$ is a singular disk diagram.
\begin{itemize}
 \item Contract to a point an edge $e$ with $\ell(e)=1$ and $\iota e\neq \tau e$.
 \item Remove an edge $e$ with $\ell(e)=1$ and $\iota e=\tau e$, and also remove any subdiagram enclosed by $e$. (We contract $e$ and everything inside $e$ to a point.)
 \item Replace a face with label $ss^{-1}$ by an edge with label $s$. (We homotope one side of the bigon and the enclosed face onto the other side.) 
\end{itemize}

The following formula for curvature in spherical 2-complexes will be useful in our proofs. It is analogous to formulas proven in \cite[Section V.3]{LS}.

\begin{lem}[Curvature formula]\label{lem:curvature}
 Let $\Sigma$ be a 2-complex tessellating a 2-sphere. Then:
 \begin{equation*}
  6=\sum_{v\in\mathrm{0-cells}(\Sigma)}(3-d(v))+\frac{1}{2}\sum_{\Pi\in\mathrm{2-cells}(\Sigma)}(6-d(\Pi)),
 \end{equation*}
 where $d(v)$ denotes the degree of a 0-cell $v$ and $d(\Pi)$ denotes the length of $\partial \Pi$, i.e. the number 1-cells in the boundary of $\Pi$ (counted with multiplicity).
\end{lem}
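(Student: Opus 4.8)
The plan is to derive the identity as a direct consequence of Euler's formula, in the same spirit as the curvature formulas of \cite[Section V.3]{LS}. Write $V$, $E$, $F$ for the numbers of $0$-cells, $1$-cells, and $2$-cells of $\Sigma$. Since $\Sigma$ tessellates the $2$-sphere, its Euler characteristic is $2$, so $V-E+F=2$.

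Next I would record the two incidence counts. Summing over the edges the contributions to the degrees of their (not necessarily distinct) endpoints gives $\sum_{v}d(v)=2E$; an edge that is a loop at $v$ contributes $2$ to $d(v)$, consistent with counting degree with multiplicity. Dually, summing over the edges the two sides along which they are traversed by the boundary cycles of $2$-cells gives $\sum_{\Pi}d(\Pi)=2E$; an edge both of whose sides abut the same face $\Pi$ contributes $2$ to $d(\Pi)$, matching the stated convention that $d(\Pi)$ is the length of $\partial\Pi$ counted with multiplicity. Then it is only algebra: $\sum_{v}(3-d(v))=3V-2E$ and $\tfrac12\sum_{\Pi}(6-d(\Pi))=3F-E$, so the right-hand side of the claimed equality equals $3V-2E+3F-E=3(V-E+F)=6$.

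The hard part, such as it is, is purely bookkeeping: one must make sure the two handshake-type identities hold under the multiplicity conventions in the statement, i.e. that loops and self-adjacent faces are counted correctly. But this is exactly how $d(v)$ and $d(\Pi)$ were defined, and the alternating sum $V-E+F$ is insensitive to whether $\Sigma$ is a regular CW complex, so no genuine obstacle arises and the lemma follows in one line from Euler's formula.
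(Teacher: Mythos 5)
Your proposal is correct and is essentially the paper's own proof: both use Euler's formula $V-E+F=2$ together with the handshake identities $\sum_v d(v)=2E=\sum_\Pi d(\Pi)$ and then rearrange to get $6=(3V-2E)+(3F-E)$. Your extra care about loops and self-adjacent faces is consistent with the multiplicity conventions in the statement and adds nothing beyond what the paper's computation already implicitly uses.
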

\begin{proof}
 Let $V$ denote the number of 0-cells, $E$ the number of 1-cells and $F$ the number of 2-cells of $\Sigma$. Then it is well-known that $2=V-E+F.$ Moreover note:
 \begin{equation*}
  E=\frac{1}{2}\sum_{v\in\mathrm{0-cells}(\Sigma)}d(v)
   =\frac{1}{2}\sum_{\Pi\in\mathrm{2-cells}(\Sigma)} d(\Pi).
 \end{equation*}
Thus:
\begin{align*}
 6&=(3V-2E)+(3F-E)\\
  &=\sum_{v\in\mathrm{0-cells}(\Sigma)}(3-d(v))+\frac{1}{2}\sum_{\Pi\in\mathrm{2-cells}(\Sigma)}(6-d(\Pi)).
\end{align*}
\end{proof}

From now on, we fix a $C(6)$-presentation $\langle S\mid R\rangle$ for a group $G$ as in Theorem~\ref{thm:c6}. We will interpret $R$ as a \emph{labelled graph}:

\begin{defi}[Labelled graph]\label{defi:labelling} Let $\Gamma$ be a graph. A \emph{labelling} of $\Gamma$ over a set $S$ is a choice of orientation on each edge and map assigning to each edge an element of $S$. We call a graph with a labelling a \emph{labelled graph}.\end{defi}

The \emph{label} of a path in $\Gamma$ is defined in the same way as the label of a path in a diagram. A \emph{label-preserving} graph homomorphism is a graph homomorphism that preserves the labelling (i.e. both the orientation and the map to the set of labels).

For $r\in R$, let $[r]$ denote the class of all cyclic conjugates and their inverses of $r$. Each $[r]\subseteq R$ gives rise to a labelled cycle graph $\gamma_{[r]}$ of length $|r|$ that is labelled by the word $r$. For simplicity, we write $\gamma_r$ for $\gamma_{[r]}$. We denote $\Gamma_R:=\sqcup_{[r]\subseteq R}\gamma_r$. Thus we can consider subwords of relators as reduced paths in $\Gamma_R$. A piece then corresponds to a reduced labelled path $p$ which, considered as labelled line graph, admits two distinct label-preserving maps to $\Gamma_R$ such that there does not exist a label-preserving automorphism $\phi$ of $\Gamma_R$ making the following diagram commute:
\begin{center}
\begin{tikzpicture}
  \matrix (m) [matrix of math nodes,row sep=0em,column sep=2em,minimum width=2em] {
     \,& \Gamma_R \\
     p &\,\\
     \,& \Gamma_R \\};
  \path[-stealth]
    (m-2-1) edge (m-1-2)
            edge (m-3-2)
    (m-1-2) edge node [right] {$\phi$} (m-3-2);
\end{tikzpicture}
\end{center}

\subsection{Proof of Theorem~\ref{thm:c6}} The strategy of proof of Theorem~\ref{thm:c6} is the following: We define group elements $\alpha_1,\alpha_2$ as suitable products of subwords of relators. Then we prove that $\alpha_1$ and $\alpha_2$ freely generate a free subgroup $F$ of $G$ that has the CEP. This is achieved by translating the problem into spherical diagrams and applying the curvature formula (Lemma \ref{lem:curvature}). The $\alpha_i$ will be products of ``halves of relators'' in the sense of piece distance (Definition \ref{defi:piece_distance}), as defined in \cite{Gru}. We use the notion of \emph{support} (see Definition~\ref{defi:support}) to ensure that no free cancellation occurs when forming the products.

\begin{defi}[Piece distance]\label{defi:piece_distance} Let $r\in R$, and let $x$ and $y$ be vertices in $\gamma_r$. The \emph{piece distance} of $x$ and $y$, denoted $d_p(x,y)$, is the least number of pieces whose concatenation is a path in $\gamma_r$ from $x$ to $y$. If there is no such path, set $d_p(x,y)=\infty$. 
\end{defi}

\begin{defi}[Support of a vertex]\label{defi:support} Let $r\in R$, and let $v$ be a vertex in $\gamma_r$. The \emph{support of $v$}, denoted $\supp(v)$, is the set of labels of paths of length 1 starting at $v$. 
\end{defi}

The elements of $\supp(v)$ are in $S\sqcup S^{-1}$, and $|\supp(v)|=2$ since words in $R$ are cyclically reduced. Since $S$ is finite and $R$ is infinite, we may choose an infinite subset $R_0\subseteq R$ such that each $r\in R_0$ is a product of pieces. 

The following is immediate from the $C(6)$-condition. For a proof, see \cite[Lemma~3.6]{Gru}.
\begin{lem}\label{lem:piece_distance} Let $r\in R_0$, and let $x$ be a vertex in $\gamma_r$. Then there exists a vertex $y\in \gamma_r$ with $d_p(x,y)\geqslant 3$. 
\end{lem}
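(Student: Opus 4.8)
\emph{Plan of proof.} I would argue by contradiction: assume $d_p(x,y)\leqslant 2$ for \emph{every} vertex $y$ of $\gamma_r$, and extract from this a cyclic conjugate of $r$ that is a product of at most $5$ pieces; since $R$ is symmetrized this conjugate lies in $R$, contradicting the $C(6)$-condition. The point that needs care in the setup is to work with a piece-decomposition of $\gamma_r$ that is \emph{minimal among those based at $x$}. As $r\in R_0$ is a product of pieces and subwords of pieces are again pieces, the cyclic conjugate $\hat r$ of $r$ read off $\gamma_r$ starting at $x$ is a product of pieces; let $k_x$ be the least $k$ with $\hat r$ a product of $k$ pieces, so the $C(6)$-condition applied to $\hat r\in R$ gives $k_x\geqslant 6$. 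Fix pieces $\hat P_1,\dots,\hat P_{k_x}$ with $\hat r\equiv\hat P_1\hat P_2\cdots\hat P_{k_x}$, regarded as consecutive arcs of $\gamma_r$ starting at $x$, with endpoints $\hat v_0=x,\hat v_1,\dots,\hat v_{k_x-1}$ (so $\hat P_i$ is the counterclockwise arc $\hat v_{i-1}\to\hat v_i$), and set $y:=\hat v_3$; since $k_x\geqslant 6$ these endpoints are distinct, so $y\neq x$.

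Given this $y$, suppose $d_p(x,y)\leqslant 2$ and let $P$ be a path from $x$ to $y$ in $\gamma_r$ whose label is a product of at most two pieces. Each piece is a reduced word, so freely reducing the label of $P$ only cancels letters at the junction between the two pieces, leaving subwords of them, which are still pieces; hence the reduced walk from $x$ to $y$ obtained from $P$ again has this property, and I may assume $P$ is reduced. A reduced walk from $x$ to $y$ in the cycle $\gamma_r$ is monotone, hence of exactly one of three types: the counterclockwise arc $x\to y$; the clockwise arc $x\to y$; or a monotone walk going all the way around $\gamma_r$ at least once. In the last type the label of $P$ contains $\hat r$ (or $\hat r^{-1}$) as a subword, and a subword of a product of at most two pieces is a product of at most two pieces, so some element of $R$ would be a product of $2<6$ pieces, a contradiction. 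If $P$ is the counterclockwise arc $x\to y$, then as a path it equals $\hat P_1\hat P_2\hat P_3$; writing it instead as a product of at most two pieces and concatenating with the counterclockwise arc $y\to x$, namely $\hat P_4\cdots\hat P_{k_x}$, exhibits $\hat r$ as a product of at most $2+(k_x-3)=k_x-1$ pieces, contradicting minimality of $k_x$. If $P$ is the clockwise arc $x\to y$, then its reverse, the counterclockwise arc $y\to x$, is also a product of at most two pieces (inverses of pieces are pieces), and concatenating this with $\hat P_1\hat P_2\hat P_3$ exhibits $\hat r$ as a product of at most $3+2=5<6$ pieces, a contradiction. Hence $d_p(x,y)\geqslant 3$.

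The only genuinely delicate point is the one flagged above: the decomposition of $\gamma_r$ must be chosen minimal \emph{among decompositions based at $x$}. If one instead re-roots a fixed decomposition of $r$ at $x$, an extra piece appears whenever $x$ is not already one of its endpoints, and then the counting in the two ``arc'' cases no longer forces a contradiction. Everything else used --- that subwords and inverses of pieces are pieces, that a subword of a product of $n$ pieces is a product of at most $n$ pieces, and that cyclic conjugates of relators lie in $R$ by symmetrization --- is immediate from the definitions, which is why the statement is ``immediate from the $C(6)$-condition''. One could alternatively avoid corners altogether by recording, in each rotational direction, how far a single piece reaches from $x$ and then from the far end of that piece, and picking $y$ beyond both reaches; but the minimal-decomposition argument above is shorter.
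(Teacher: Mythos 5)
Your argument is correct. Note, however, that the paper does not prove this lemma at all: it simply cites \cite[Lemma~3.6]{Gru}, and the closest in-paper analogue is the proof of Lemma~\ref{lem:graphial_support}, which runs along a different line. There one takes the union of \emph{all} paths starting at $x$ that are concatenations of at most two pieces, observes that this subgraph can contain no nontrivial closed path (such a path would decompose into at most four pieces, violating the $(6)$-condition), hence is a proper subgraph, and chooses $y$ outside it; that phrasing transfers directly to the graphical and free-product settings. Your route instead exploits that $\gamma_r$ is a cycle: you fix a decomposition of the cyclic conjugate $\hat r$ based at $x$ into a \emph{minimal} number $k_x\geqslant 6$ of pieces, set $y=\hat v_3$, and rule out a $\leqslant 2$-piece path from $x$ to $y$ by the three-case analysis (wrapping path, counterclockwise arc, clockwise arc), using that subwords and inverses of pieces are pieces and that cyclic conjugates lie in the symmetrized set $R$. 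All steps check out, including the reduction to monotone paths and the use of minimality of $k_x$ in the counterclockwise case (where a direct appeal to $C(6)$ would not suffice if $k_x>6$); the only implicit point is that all pieces in a minimal decomposition are nonempty, which is what makes $\hat v_0,\dots,\hat v_3$ distinct, and that is immediate. The trade-off is that your argument is tailored to the classical cycle-graph situation, whereas the covering-subgraph argument of \cite{Gru} and Lemma~\ref{lem:graphial_support} is the one that generalizes painlessly to $Gr(6)$- and $Gr_*(6)$-labelled graphs; your closing remark about tracking how far pieces reach in each rotational direction is essentially a hybrid of the two.
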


\begin{lem}\label{lem:find} There exist relators $r_1,...,r_{16}$ in $R_0$ and vertices $x_n,y_n\in \gamma_{r_n}$ with:
\begin{itemize}
 \item $[r_n]=[r_m]\Leftrightarrow n=m$,
 \item $d_p(x_n,y_n)\geqslant 3$,
 \item $\supp(y_n)\cap\supp(x_{n+1})=\emptyset$ for $n\in\{1,...,16\}\setminus\{8,16\}$.
\end{itemize}
\end{lem}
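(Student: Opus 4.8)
The plan is to build the $r_n$ and the vertices $x_n,y_n$ one at a time, using the infinitude of $R_0$ to always have fresh relators available. First I would fix some enumeration of $R_0$ and proceed greedily: having already chosen $r_1,\dots,r_{n-1}$ (with their vertices), I pick $r_n\in R_0$ to be any relator with $[r_n]\neq[r_m]$ for all $m<n$; this is possible since $R_0$ is infinite and each $[r_m]$ is finite. By Lemma~\ref{lem:piece_distance}, inside $\gamma_{r_n}$ there is a pair of vertices at piece distance $\geqslant 3$, but I need more: I need to be able to \emph{choose} the starting vertex $x_n$ of this pair so that its support avoids $\supp(y_{n-1})$, and simultaneously keep freedom for $y_n$.

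The key observation is that the support condition is only a constraint on two of the sixteen ``gluing points'' (it is vacuous at $n=8$ and $n=16$, matching the way the $\alpha_i$ will be two separate products of halves), and that at each vertex $v$ of $\gamma_{r_n}$ the set $\supp(v)\subseteq S\sqcup S^{-1}$ has size exactly $2$. So for a given forbidden size-$2$ set $T=\supp(y_{n-1})$, I want a vertex $x_n$ with $\supp(x_n)\cap T=\emptyset$ that is still the start of a piece-distance-$\geqslant 3$ pair. Here I would argue as follows: as we traverse the cycle $\gamma_{r_n}$, consecutive vertices share exactly one label of their supports (the label of the edge between them), so the supports cannot all be forced to meet a fixed $2$-element set unless the relator is extremely short — and relators in $R_0$, being products of $\geqslant 6$ pieces, are long enough and have enough ``turning'' that some vertex has support disjoint from $T$. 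More carefully, I would note that Lemma~\ref{lem:piece_distance} actually gives, for \emph{every} vertex $x$, a vertex $y$ with $d_p(x,y)\geqslant 3$; so I only need to find \emph{some} vertex $x_n$ with $\supp(x_n)\cap\supp(y_{n-1})=\emptyset$, and then set $y_n$ to be the vertex the lemma provides for this $x_n$. Finding such an $x_n$: if every vertex of $\gamma_{r_n}$ had support meeting the fixed $2$-set $T=\{a,b\}$, then every edge of $\gamma_{r_n}$ would have a label in $\{a,b,a^{-1},b^{-1}\}$, so $r_n$ would be a word in a single pair of generators; such a relator cannot be a product of $6$ pieces while being cyclically reduced once its length exceeds a small bound, and in any case we are free to discard the finitely many ``bad'' relators from $R_0$ at the outset and assume no relator in $R_0$ uses only two letters — $R_0$ still remains infinite since $S$ is finite so there are only finitely many such excluded relators up to the relevant data, or more simply we just intersect $R_0$ with the relators involving at least three letters, which is still infinite as $S$ is finite and infinitely many relators must use more than two letters.

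Carrying this out: I would first replace $R_0$ by the infinite subset $R_0'\subseteq R_0$ of relators whose letters (in $S\sqcup S^{-1}$) do not all lie in a single $\{s,s^{-1},t,t^{-1}\}$; this is infinite because there are only finitely many subsets of $S$ of size $\leqslant 2$ and for each of them only finitely many cyclically reduced words that are products of pieces can — actually this needs the observation that $C(6)$ forces such two-letter relators to be short, which follows since a cyclically reduced word in two letters has at most, say, a bounded piece-decomposition structure; alternatively and cleanly, simply note that if infinitely many relators used only two letters we could restrict attention to those, but then we may instead work inside that sub-family and the argument dualizes — so without loss of generality $R_0'$ is infinite. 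Then I pick $r_1,\dots,r_{16}$ from $R_0'$ with distinct classes $[r_n]$, and for $n\geqslant 2$ with $n\notin\{9\}$... — more precisely, for $n\in\{2,\dots,16\}\setminus\{9\}$ I choose $x_n$ in $\gamma_{r_n}$ with $\supp(x_n)\cap\supp(y_{n-1})=\emptyset$, which exists since $r_n$ uses at least three letters so not all its vertex-supports can be contained in the union of the two forbidden letters; for $n\in\{1,9\}$ I choose $x_n$ arbitrarily; and in all cases I take $y_n$ to be a vertex with $d_p(x_n,y_n)\geqslant 3$ furnished by Lemma~\ref{lem:piece_distance}. The conditions $[r_n]=[r_m]\Leftrightarrow n=m$, $d_p(x_n,y_n)\geqslant 3$, and $\supp(y_n)\cap\supp(x_{n+1})=\emptyset$ for $n\notin\{8,16\}$ then all hold by construction (the last because $x_{n+1}$ was chosen to avoid $\supp(y_n)$).

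The main obstacle is the support-avoidance step: one must be sure that a relator in $R_0$ genuinely has a vertex whose support misses a prescribed $2$-element set, and the cleanest way to guarantee this is the preliminary reduction to relators that are not words in two letters — so the real content is the (easy but needed) remark that the $C(6)$-condition prevents long cyclically reduced two-letter words from being relators in a minimal sense, or, failing a slick argument, simply passing to whichever of the two infinite sub-families (``uses $\geqslant 3$ letters'' or ``uses exactly $2$ letters'') one prefers and noting the construction goes through in either, since in the two-letter case consecutive supports still differ and one can alternate. Everything else is bookkeeping with the greedy choice over an infinite set.
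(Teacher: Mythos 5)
There is a genuine gap, and it sits exactly where you suspected: the support-avoidance step. Your reduction rests on the claim that if every vertex of $\gamma_{r_n}$ has support meeting a fixed two-element set $T$, then every edge of $\gamma_{r_n}$ is labelled by one of the (at most) two letters involved in $T$. This is false: $\supp(v)\cap T\neq\emptyset$ only constrains \emph{one} of the two length-$1$ paths at $v$, so the other incident edge may carry any letter. For instance, if $T=\{a,a^{-1}\}$, every relator of the form $as_1as_2a\dots s_l$ (every second letter equal to $a$, the $s_i$ arbitrary) has all vertex supports meeting $T$ while using arbitrarily many letters; this is precisely case 2 of the paper's proof, so it cannot be legislated away. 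Your auxiliary claims are also wrong: the $C(6)$-condition does not force two-letter relators to be short, and there may be \emph{infinitely} many of them (the classical infinitely presented examples $ab^{n_1}ab^{n_2}\cdots$, e.g.\ Bowditch's, are two-letter $C'(\frac16)$-relators), so you cannot discard them as a finite set. Finally, in the genuine two-letter situation your fallback (``one can alternate'') is not carried out, and it is exactly where the difficulty lies: if all remaining relators have the form $a^{k_1}b^{-k_2}a^{k_3}\cdots b^{-k_l}$ and $T=\{a^{-1},b\}$, then \emph{every} vertex of \emph{every} remaining relator meets $T$, so no admissible $x_n$ exists at all.

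This shows the structural weakness of your greedy scheme: you fix $y_{n-1}$ once and for all (as whatever Lemma~\ref{lem:piece_distance} returns) and then search for $x_n$; but in the bad configurations above the search is empty, and the only way out is to \emph{re-choose} $y_{n-1}$ — e.g.\ at the junction of an $a$-block and a $b$-block, or at an endpoint of an $a$-block — so that its support becomes one you can avoid on the next relator. That is why the paper's inductive step selects $y_k$ \emph{jointly} with $r_{k+1}$ and $x_{k+1}$ (quantifying the failure over all admissible $y_K$), and why its proof then needs the block analysis: it shows that a $d_p\geqslant 3$ partner can always be relocated to a block boundary because otherwise the relator would be a product of at most $5$ pieces. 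Your proposal contains the correct skeleton (greedy choice over an infinite family, using Lemma~\ref{lem:piece_distance} and the disjoint-support bookkeeping), but without the freedom to re-choose the $y$'s and without the block-junction argument it does not go through in the two cases that are the actual content of Lemma~\ref{lem:find}.
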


The following proof aims to use the least number of relators possible in the construction. This will be used in Remark~\ref{remark:finite_presentation} for the case that $R_0$ is finite. If, as assumed now, $R_0$ is infinite, many technicalities, such as keeping track of $L$ and $L'$, can be skipped.

\begin{proof}[Proof of Lemma~\ref{lem:find}.]

Take $R_1$ to be a set of representatives of $[\cdot]$-classes in $R_0$.
By Lemma \ref{lem:piece_distance}, for every $r\in R_1$, for every vertex $x\in \gamma_r$ there exists a vertex $y\in \gamma_r$ with $d_p(x,y)\geqslant 3$.
We construct the $r_k$ in $R_1$ inductively: Given $r_k$ and $x_k$, we pick $y_k$ with $d_p(x_k,y_k)\geqslant 3$ such that there exists $r_{k+1}$ distinct from all $r_i$, $i\leqslant k$, and a vertex $x_{k+1}$ in $\gamma_{k+1}$ such that $\supp(y_k)\cap \supp(x_{k+1})=\emptyset$. If this is possible for every $k<16$, the claimed sequence exists.
 
Now suppose there exists some $K<16$ such that for every choice of $y_K$ with $d_p(x_K,y_K)\geqslant3$, every vertex $x$ in every $\gamma_r$ with $r\in R_2:=R_1\setminus\{r_1,r_2,\dots,r_K\}$ satisfies $\supp(y_K)\cap \supp(x)\neq\emptyset$. Choose $y_K$ with $d_p(x_K,y_K)\geqslant 3$ in $\gamma_{r_K}$. 
There are two cases to consider:
\begin{itemize}
 \item[1)] $\supp(y_K)=\{a^{-1},b\}$ for $a\neq b$, $a,b\in S\sqcup S^{-1}$. Then every $r\in R_2$ has the form (up to inversion and cyclic conjugation) $r=a^{k_1}b^{-k_2}a^{k_3}...b^{-k_l}$ where all $k_i>0$.
 \item[2)] $\supp(y_K)=\{a^{-1},a\}$ for $a\in S$. Then every $r\in R_2$ has the form (up to cyclic conjugation) $r=a^{k_1}s_1a^{k_2}s_2a^{k_3}...s_l$, where $k_i\in \Z\setminus\{0\}$, $s_i\in S\sqcup S^{-1}$.
\end{itemize}

If $K<8$, set $L:=0$. If $K\geqslant 8$, set $L:=8$. We continue by choosing anew the relators $r_{L+1},...,r_{16}$ in $R_2$ (keeping the chosen $r_1,...,r_8$ and $x_1,...,x_8$ and $y_1,...,y_8$ if $L=8$). 

Suppose we are in case 1. On all but at most two elements of $R_2$, all subwords of the form $a^l$ or $b^l$ (called $a$-\emph{blocks} respectively $b$-\emph{blocks}) are pieces. The only (up to two) relators where this may not be the case are relators where maximal powers of $a$, respectively $b$, occur. Let $R_3$ denote the subset of $R_2$ where all $a$-blocks and all $b$-blocks are pieces.

Choose $r_{L+1}$ arbitrary in $R_3$ and $x_{L+1}$ arbitrary in $\gamma_{r_{L+1}}$. We claim: There exists $y_{L+1}$ in $\gamma_{r_{L+1}}$ with $d_p(x_{L+1},y_{L+1})\geqslant 3$ such that $y_{L+1}$ lies in the intersection of an $a$-block and a $b$-block. By Lemma \ref{lem:piece_distance}, there exists a vertex $y$ in $\gamma_{r_{L+1}}$ with $d_p(x_{L+1},y)\geqslant 3$. Suppose it lies inside an $a$-block or $b$-block $\beta$. Suppose both endpoints of $\beta$, denoted $\iota\beta$ and $\tau\beta$, have piece-distance $\leqslant2$ from $x_{L+2}$. Then there are edge-disjoint paths $x_{L+1}\to\iota\beta$ and $x_{L+1}\to\tau\beta$ on $\gamma_{r_{L+1}}$ each consisting of at most $2$ pieces. Since $\beta$ is a piece, $r_{L+1}$ is made up of at most 5 pieces, a contradiction. Thus we can choose $y_{L+1}$ as claimed. Since we are in case 1, we can now choose any $r_{L+2}\in R_3\setminus \{r_{L+1}\}$, and there exists $x_{L+2}$ in $\gamma_{r_{L+2}}$ with $\supp(y_{L+1})\cap\supp(x_{L+2})=\emptyset$. Since $r_{L+1}$ 
and $x_{L+1}$ were arbitrary, we can proceed inductively to complete the proof.

Suppose we are in case 2. Then $y_K$ is in the interior of an $a$-block $\beta$ (which may be the entire relator $r_K$). If $\beta$ is a piece, we can use the same argument as above to replace $y_K$ by a boundary vertex of $\beta$, reducing the problem to case 1. If $\beta$ is not a piece, then it is the maximal power of $a$ occurring in any relator, and all $a$-blocks occurring in other relators are pieces.

By the same argument as above, whenever we have a vertex $x$ in $\gamma_r$ for $r\in R_2$, we can find a vertex $y$ with $d_p(x,y)\geqslant 3$ that is endpoint of an $a$-block. Now we apply our initial naive algorithm to the set $R_2$, i.e. inductively try to construct the set of relators $r_{L+1},...,r_{16}$ and corresponding vertices. If this algorithm fails to choose $x_{K'+1}$ on a new relator, where $K'<16$, we choose $y_{K'}$ with $d_p(x_{K'},y_{K'})\geqslant 3$ as endpoint of an $a$-block. Since the algorithm fails, we are in case 1 (for the set of relators $R_3:=R_2\setminus \{r_{L+1},...,r_{L+K'}\}$) with the additional property that all $a$-blocks are pieces (because we have already excluded maximal powers of $a$ by excluding $r_K$), and all $b$-blocks are pieces (because they have length 1).

If $K'<8$, let $L':=0$. If $K'\geqslant 8$, let $L':=8$. We choose anew the relators $r_{L+L'+1},...,r_{16}$ and corresponding vertices as in case 1, not having to exclude the maximal powers of $a$ and $b$ by the additional property that $a$-blocks and $b$-blocks are pieces. 
\end{proof}

\begin{remark}\label{remark:finite_presentation} We show that Lemma~\ref{lem:find} also applies large enough \emph{finite} presentations. Proposition~\ref{prop:main_classical} will show that also in these cases, the defined group has property $F(2)$.

Let $\langle X\mid Y\rangle$ be a presentation. For each $r\in Y$, denote by $[r]$ the set of all cyclic conjugates of $r$ and their inverses. A \emph{concise refinement} is a presentation $\langle X\mid \tilde Y\rangle$, where $\tilde Y$ is a set of representatives for the $[\cdot]$-classes in $Y$ (i.e. for each $[r]\in \{[\rho]:\rho\in Y\}$, we choose exactly one element of $[r]$). 

A \emph{Tietze-reduction} of a presentation is obtained as follows: We call a relator $r$ \emph{redundant in} $\langle X\mid Y\rangle$ if it contains a subword $s^\epsilon$, $s\in X,\epsilon\in\{\pm1\}$, where $s^\epsilon$ occurs exactly once in $r$ and in no other relator, and $s^{-\epsilon}$ occurs in no relator. We call the generator $s$ \emph{redundant} in $\langle X\mid Y\rangle$ if it occurs in such a way in a redundant relator. The Tietze-reduction of $\langle X\mid Y\rangle$ is obtained as follows: Simultaneously remove from $Y$ all redundant relators, and for each of the reduntant relators $r$, remove from $X$ one redundant generator in $r$. 
This operation is a Tietze-transformation, i.e. the resulting presentation defines the same group. (This is not an iterative process, i.e. the Tietze-reduction of a presentation may again contain redundant relators and generators.) 

Suppose $\langle X\mid Y\rangle$ is symmetrized. Note that if a relator $r$ in the Tietze reduction $\langle X'\mid Y'\rangle$ of a concise refinement of $\langle X\mid Y\rangle$ is not a product of pieces with respect to $Y$, then it is a proper power, and for any vertex $x$ in $\gamma_r$ there exists $y$ in $\gamma_r$ with $d_p(x,y)=\infty$. Therefore, we can apply Lemma~\ref{lem:find} to $\langle X'\mid Y'\rangle$. Analyzing the proof shows that if $|Y'|\geqslant 30$, then the conclusion of the lemma holds. 
\end{remark}

\begin{remark}\label{remark:no_simple_groups} By \cite{AJ,ACES}, any group given by a $C(6)$-presentation $\langle X\mid  Y\rangle$ with $|Y|<\infty$ is either cyclic, infinite dihedral, or SQ-universal. (The generating set $X$ may be infinite.) Thus, by Remark~\ref{remark:finite_presentation}, \emph{every} group given by a $C(6)$-presentation (with no restrictions on the cardinalites of the sets of generators and relators) is either cyclic, infinite dihedral, or SQ-universal. By \cite{BW}, no $C(6)$-group can contain $F_2\times F_2$ as a subgroup. Thus, every infinite $C(6)$-group must have a nontrivial proper quotient, i.e. there does not exist an infinite simple $C(6)$-group.
\end{remark}

\noindent {\bf Definition of the free subgroup with the CEP.} For each $k\in\{1,...,16\}$, we denote $\gamma_k:=\gamma_{r_k}$. Let $\alpha_1$ and $\alpha_2$ be symbols not in $S$, and denote for $i\in\{1,2\}$:
$$W_i:=\{\alpha_i^{-1}\ell(p_{8i-7})\ell(p_{8i-6})\dots\ell(p_{8i})\mid p_k\text{ a simple path from }x_k\text{ to }y_k\}.$$

Note that for each $p_k$, there are exactly two choices, i.e. $|W_i|=2^8$. Let $\alpha:=\{\alpha_1,\alpha_2\}$ and $W:=W_1\cup W_2$. As groups, we have $\langle S\mid R\rangle\cong \langle S,\alpha\mid W,R\rangle$. Using the presentation on the right-hand side, $\alpha$ admits a map to $G$. We claim that the image of $\alpha$ in $G$ generates a rank 2 free subgroup with the CEP. Showing this claim will complete our proof of Theorem~\ref{thm:c6}.
 
We call a subword of an element of $W$ that is one of the $\ell(p_k)$ a \emph{block}. The blocks adjacent to $\alpha_i$ (where the word is considered cyclic) are called \emph{boundary blocks}, all other blocks are called \emph{interior blocks}. Each block is identified with a path in $\gamma_k$ for some $k$. We call this identification \emph{lift}. If $\Pi$ is a face in a diagram and the label of $\Pi$ lies in $R$, then $\partial \Pi$ admits a label-preserving map to $\Gamma_R$. We call any such map a \emph{lift}.

\begin{prop}\label{prop:main_classical}
The set $\alpha$ injects into $G$, the image of $\alpha$ freely generates a free subgroup $F$ of $G$, and $F$ has the CEP.
\end{prop}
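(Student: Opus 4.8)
The plan is to prove the three assertions of Proposition~\ref{prop:main_classical} simultaneously via a single diagrammatic argument. To show that $\alpha$ injects, that its image freely generates $F=\langle\alpha_1,\alpha_2\rangle$, and that $F$ has the CEP, it suffices to prove the following uniform statement: if $w$ is a freely reduced word in the free group $F(\alpha_1,\alpha_2)$ which lies in the normal closure of some $N\trianglelefteq F(\alpha_1,\alpha_2)$ after passing to $G$---i.e.\ $w\in\langle\langle N\cup R\cup W\rangle\rangle$ in $F(S,\alpha)$---then $w$ already lies in $\langle\langle N\rangle\rangle$ in $F(\alpha_1,\alpha_2)$. Taking $N=1$ yields injectivity of $\alpha$ and freeness of $F$ (no freely reduced nonempty $w$ dies in $G$); the general case is precisely the CEP. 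So fix such a $w$ and, using van Kampen's Lemma together with Lemma~\ref{lem:reduction} applied with $R'$ the defining relators of $N$ expressed in the $W$- and $R$-generators, take a singular disk diagram $D$ over $\langle S,\alpha\mid W,R,N\rangle$ for $w$ which is \emph{reduced} in the strong sense of Lemma~\ref{lem:reduction}: no two faces sharing a vertex have labels that are freely trivial or (for the $W$- and $N$-faces) that multiply into the normal closure we are quotienting by. The $N$-faces and $W$-faces have boundary words involving $\alpha_1^{\pm1},\alpha_2^{\pm1}$, while $R$-faces have boundary words in $M(S)$; the $\alpha$-edges form a subgraph, and cutting $D$ along it (and collapsing the $N$-faces, which we want to end up on the boundary) should exhibit the boundary word of $w$ as the appropriate product in $F(\alpha_1,\alpha_2)$.

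The key step is a curvature estimate in the $S$-part of $D$. After removing the $\alpha$-edges and the $N$- and $W$-faces, one is left with a (possibly singular) disk diagram over $\langle S\mid R\rangle$, each of whose faces $\Pi$ satisfies the $C(6)$-condition, hence interior faces have $d(\Pi)\geqslant 6$ along their "$R$-boundary". To run Lemma~\ref{lem:curvature} one doubles this diagram along its boundary, or rather caps it off, to obtain a tessellation $\Sigma$ of the $2$-sphere; the formula then forces either a vertex of degree $\leqslant 2$ (a spur or a valence-two vertex, which reducedness and the free-reduction normalization of Lemma~\ref{lem:reduction} rule out in the interior) or a face of $\partial$-degree $\leqslant 5$ touching the boundary. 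Here the choices made in Lemma~\ref{lem:find} enter decisively. Each boundary block $\ell(p_k)$ is a simple path from $x_k$ to $y_k$ of piece-distance $\geqslant 3$, so an $R$-face whose lift to $\Gamma_R$ traverses a sub-block of such a $p_k$ cannot have its entire boundary covered by that block together with at most $4$ other pieces coming from neighbouring faces---that would contradict $d_p(x_k,y_k)\geqslant 3$ combined with $C(6)$. And the support condition $\supp(y_n)\cap\supp(x_{n+1})=\emptyset$ guarantees that when two consecutive blocks $\ell(p_n)\ell(p_{n+1})$ are concatenated in forming a $W$-relator, \emph{no free cancellation occurs at the junction}, so a single $R$-face cannot "straddle" a junction and thereby pretend to be short. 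The upshot should be that $\Sigma$ admits no positively-curved configuration at all unless $D$ has no $R$-faces, forcing $D$ to be a diagram purely over $\langle\alpha\mid W,N\rangle$, i.e.\ over $\langle\alpha\mid N\rangle$ once the $W$-faces are recognized as consequences of free equalities among blocks---which they are not, again by the support condition, so in fact $D$ has no $W$-faces either and lives over $\langle\alpha\mid N\rangle$, giving $w\in\langle\langle N\rangle\rangle$ as desired.

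More carefully, I would organize the argument as: (1) set up $D$ and the normalization via Lemma~\ref{lem:reduction}; (2) show every interior $S$-edge lies on the boundary of at most\ldots\ and classify the ways a face can meet the $\alpha$-subgraph, using that boundary blocks have length $\geqslant 3$ in piece-distance and interior blocks are honest halves; (3) build the sphere $\Sigma$ by taking the double of $D$ across $\partial D$ (so boundary faces get paired and every resulting face has degree $\geqslant 6$ in the doubled complex precisely when it was an $R$-face not meeting $\alpha$ in the prescribed good way), then apply Lemma~\ref{lem:curvature} to conclude $6 = \sum(3-d(v)) + \tfrac12\sum(6-d(\Pi)) \leqslant 0$, a contradiction, unless no $R$- or $W$-faces are present; (4) conclude. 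The main obstacle I anticipate is step (3): making the doubling (or capping) construction interact correctly with the $\alpha$-edges and the $N$- and $W$-faces so that the curvature formula genuinely gives a contradiction---one must be sure the $W$- and $N$-faces end up contributing non-positively (so they should be pushed to the boundary and paired with their mirror images, where the length-$\geqslant 6$ count still holds because each block has piece-length $\geqslant 3$ and there are $8$ blocks per $W$-relator) and that the degree-$\leqslant 2$ vertices really are excluded by reducedness. Getting the bookkeeping of boundary vs.\ interior blocks exactly right, so that the $16$ relators and the specific support disjointnesses of Lemma~\ref{lem:find} are each used where needed, is where the real work lies; the rest is the standard small-cancellation-via-curvature machinery.
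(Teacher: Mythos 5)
Your overall strategy is the same as the paper's: take a diagram for $w$ over $\langle S,\alpha\mid L,W,R\rangle$ (your $N$-faces are the paper's $L$-faces) that is minimal in a suitable sense, pass to a spherical diagram, and contradict the curvature formula of Lemma~\ref{lem:curvature} using the piece-distance and support conditions built into Lemma~\ref{lem:find}. But there is a genuine gap precisely at the point you yourself flag as the ``main obstacle'': you never supply the construction that makes the curvature count work. Neither doubling along $\partial D$ nor simply capping $D$ with a face labelled $w$ suffices. The capped face and the $L$-faces have boundary words in $\alpha$, so they carry no small cancellation property; after merging each such face with the surrounding $W$-faces, the vertices at junctions between consecutive blocks can have degree $2$, and removing them merges two arcs of a single adjacent $R$-face into one arc that is a product of two pieces but not a piece, so the count ``every $R$-face has at least $6$ sides'' fails and positive curvature cannot be excluded. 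The paper's proof handles exactly this by replacing each $\alpha$-face together with its surrounding $W$-faces by a \emph{wheel}: delete the $\alpha$-edges, add an apex vertex, join every block endpoint to the apex by a cone-edge, and merge cones whose blocks meet in non-pieces into stars. The cone-edges force degree $\geqslant 3$ at block endpoints, the support disjointness of Lemma~\ref{lem:find} forces degree $\geqslant 3$ at interior-block junctions, and one then counts: cones and stars coming from interior blocks have $\geqslant 5$ sides, those from boundary blocks $\geqslant 4$, and the apex has degree $k\geqslant 8$ (this is where having $8$ blocks per $W$-word, hence $16$ relators, is used), so each wheel contributes at most $3-\tfrac{3k}{8}\leqslant 0$ while $R$-faces contribute nonpositively, contradicting Lemma~\ref{lem:curvature}. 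Nothing in your sketch produces these degree bounds, and your fallback suggestion of ``removing the $\alpha$-edges and the $N$- and $W$-faces'' to get a diagram over $\langle S\mid R\rangle$ leaves holes whose boundaries you do not control.

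Two further points. First, before any curvature argument one needs the normalization claims that the paper derives from $(L,W,R)$-lexicographic minimality rather than from Lemma~\ref{lem:reduction} alone: no $L$-face meets $\partial D$ and every $L$-face is surrounded by $W$-faces (this uses normality of $N$ in $F$); two $W$-faces never share an $\alpha$-edge and shared block-arcs never have coinciding lifts, so they are pieces; arcs in $R$--$R$ and block--$R$ intersections are pieces; and all $R$- and $L$-faces are simply connected. Second, your endgame is off: the argument does not show that $D$ has no $R$- or $W$-faces and then read off $w\in\langle\langle N\rangle\rangle$; it shows that a minimal counterexample diagram (which has at least one face since $w$ is freely nontrivial) cannot exist at all, and injectivity and freeness are obtained by a separate, easier run of the same wheel argument with $L=\emptyset$.
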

\begin{proof} 
We first show the CEP. Let $N\leqslant F$ be normal in $F$. Suppose there exists $g\in (\langle N\rangle ^G\cap F)\setminus  N$. Let $L$ be the set of words in $\alpha\sqcup\alpha^{-1}$ representing elements of $N$, and consider the presentation $\langle S,\alpha\mid L,W,R\rangle$. Then there exists a word $w$ in $\alpha\sqcup\alpha^{-1}$ representing $g$ and a diagram $D$ over $\langle S,\alpha\mid L,W,R\rangle$ for $w$. Assume $g$, $w$, and $D$ are chosen such that the $(L,W,R)$-lexicographic area of $D$ is minimal for all possible choices (i.e. we first minimize the number of faces labelled by elements of $L$, then the number of faces labelled by elements of $W$ and then the number of faces labelled by elements of $R$), and among these choices, the number of edges of $D$ is minimal. Note that by assumption, $w$ is freely nontrivial, i.e. $D$ has at least one face. We will construct from $D$ a 2-complex $\Sigma'$ tessellating a 2-sphere that violates the curvature formula (Lemma \ref{lem:curvature}), 
whence $w$ does not exist and our claim holds.

\vspace{10pt}

\noindent{\bf Claim 1.} $D$ has the following properties:
\begin{itemize}
 \item [a)] $D$ is a simple disk diagram, and $w$ is cyclically reduced.
 \item [b)] No $L$-face intersects $\partial D$. Therefore, every edge of $\partial D$ is contained in a $W$-face.
 \item [c)] Every $L$-face is simply connected, and no two $L$-faces intersect. Therefore, every $L$-face shares all its boundary edges with $W$-faces. We say it is \emph {surrounded} by $W$-faces.
 \item [d)] The intersection of two $W$-faces does not contain an $\alpha$-edge. For a path $a$ in the intersection of two blocks $\beta$ and $\beta'$ of two $W$-faces, the lifts $a\to\beta\to \Gamma_R$ and $a\to\beta'\to \Gamma_R$ do not coincide.  
 \item [e)] A path in the intersection of two $R$-faces is a piece. For a path $a$ in the intersection of a block $\beta$ of a $W$-face and an $R$-face $\Pi$ the lift $a\to\beta\to\Gamma_R$ does not coincide with any lift $a\to\partial\Pi\to\Gamma_R$ and, therefore, is a piece.
 \item [f)] Every $R$-face is simply connected.
 
\end{itemize}

a) $\partial D$ is a product of conjugates of the boundary labels of its simple disk components. At least one of these components must have label not in $L$, and, by minimality, equals $D$. If the boundary word of $D$ is not cyclically reduced, we can fold together inverse edges in its boundary as in Figure~\ref{figure:folding} to reduce the number of edges of $D$, contradicting minimality. Thus, $w$ is cyclically reduced.
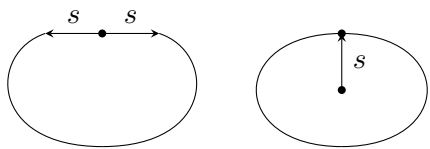
\begin{figure}\label{figure:folding}
 \begin{tikzpicture}[>=stealth,scale=.5]
\draw[-] (1.5,0) .. controls (3,-.5) and (3,-3) .. (0,-3);
\draw[-] (-1.5,0) .. controls (-3,-.5) and (-3,-3) .. (0,-3);
\fill (0,0) circle (3pt);
\draw [->] (0,0) to node [above] {\small $s$} (1.5,0);
\draw [->] (0,0) to node [above] {\small $s$} (-1.5,0);
\end{tikzpicture}
\begin{tikzpicture}[>=stealth,scale=.5]
\draw[-] (0,0) .. controls (3,0) and (3,-3) .. (0,-3);
\draw[-] (0,0) .. controls (-3,0) and (-3,-3) .. (0,-3);
\fill (0,0) circle (3pt);
\fill (0,-1.5) circle (3pt);
\draw [->] (0,-1.5) to node [right] {\small $s$} (0,0);
\end{tikzpicture}
\caption{Folding together inverse edges $e_1$ and $e_2$ with the same label $s$.}
\end{figure}

b) Suppose an $L$-face $\Pi$ intersects $\partial D$ in a vertex $v$. Let $h$ be the initial subpath of $\partial D$ terminating at $v$, and $h'$ the terminal subpath of $\partial D$ starting at $v$. 
Consider $\partial \Pi$ with basepoint $v$ and with the same orientation as $\partial\Pi$. Then we have (in $F(S\cup\alpha)$) $w':=\ell(h)\ell(\partial \Pi)^{-1}\ell(h')=\ell(\partial D)\ell(h')^{-1}\ell(\partial \Pi)^{-1}\ell(h')$, whence $w'$ represents an element of $(\langle N \rangle^G\cap F) \setminus N$. We can remove $\Pi$ and ``cut up'' the resulting annulus as in Figure \ref{figure:boundary_face} to obtain a diagram for $w'$ that has fewer $L$-faces than $D$, contradicting minimality.
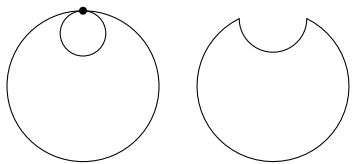
\begin{figure}\label{figure:boundary_face}
\begin{tikzpicture}[line cap=round,line join=round,x=1cm,y=1cm,scale=.5]
\draw(2,3) circle (2cm);
\draw(2,4.4) circle (0.6cm);
\draw [shift={(7,3)}] plot[domain=-1.57:1.11,variable=\t]({1*2*cos(\t r)+0*2*sin(\t r)},{0*2*cos(\t r)+1*2*sin(\t r)});
\draw [shift={(7,3)}] plot[domain=-1.57:1.11,variable=\t]({-1*2*cos(\t r)+0*2*sin(\t r)},{0*2*cos(\t r)+1*2*sin(\t r)});
\draw [shift={(7,4.79)}] plot[domain=-3.14:0,variable=\t]({1*0.89*cos(\t r)+0*0.89*sin(\t r)},{0*0.89*cos(\t r)+1*0.89*sin(\t r)});
\begin{scriptsize}
\fill [color=black] (2,5) circle (3pt);
\end{scriptsize}
\end{tikzpicture}
 \caption{If $\Pi$ intersects the boundary in a vertex $v$, we remove $\Pi$ and ``cut up'' the resulting annulus to obtain a singular disk diagram.}
\end{figure}

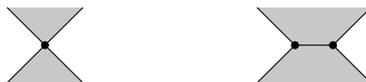
\begin{figure}\label{figure:blowup}
\begin{tikzpicture}[scale=.5]
\fill[mygrey] (0,1) to (1,0) to (2,1);
\fill[mygrey] (0,-1) to (1,0) to (2,-1);
\draw[-] (0,1) to (1,0) to (2,1);
\draw[-] (0,-1) to (1,0) to (2,-1);
\fill (1,0) circle (3pt);
\end{tikzpicture}
\hspace{2cm}
\begin{tikzpicture}[scale=.5]
\fill[mygrey] (0,1) to (1,0) to (2,0) to (3,1);
\fill[mygrey] (0,-1) to (1,0) to (2,0) to (3,-1);
\draw[-] (0,1) to (1,0);\draw[-] (2,0) to (3,1);
\draw[-] (0,-1) to (1,0);\draw[-] (2,0) to (3,-1);
\draw[-] (1,0) to (2,0);
\fill (1,0) circle (3pt);
\fill (2,0) circle (3pt);
\end{tikzpicture}
\caption{Blowing up a vertex in the intersection of two (grey) faces by inserting an edge.}
\end{figure}
c) If two distinct $L$-faces intersect in a vertex and if their labels are not freely inverse, we can merge them into one $L$-face because $N$ is normal in $F$. (Formally, we blow up the vertex in their intersection as in Figure~\ref{figure:blowup} and then remove the inserted edge.) This contradicts minimality. If they are inverse, we use Lemma~\ref{lem:reduction} to remove them, again contradicting minimality. If an $L$-face $\Pi$ is non-simply connected, it encloses some subdiagram $\Delta$. Then $\ell(\partial\Delta)\in L$ by the minimality assumption, and $\Pi$ and $\Delta$ can be merged into one $L$-face; this contradicts minimality. 

d) If two $W$-faces $\Pi$ and $\Pi'$ intersect in their $\alpha$-edges, then, by construction, $\ell(\Pi)\ell(\Pi')$ (read from a vertex in the intersection and with counterclockwise orientation) lies in the normal subgroup of $F(S\cup\alpha)$ generated by $R$. Thus, by Lemma~\ref{lem:reduction}, we can replace $\Pi$ and $\Pi'$ by $R$-faces, contradicting $(L,W,R)$-minimality.

Let $a$ be a path in the intersection of two blocks $\beta$ and $\beta'$ of two $W$-faces $\Pi$ and $\Pi'$. Suppose the maps $a\to\beta\to \Gamma_R$ and $a\to\beta'\to \Gamma_R$ coincide. Then $\ell(\Pi)\ell(\Pi')$, read from the initial vertex of the $\alpha_i$-edge of $\Pi$ reduces to a word:
$\alpha_iw\alpha_i^{-1}w'$ where $w$ and $w'$ lie in the normal subgroup of $F(S)$ generated by $R$. Thus, $\ell(\Pi)\ell(\Pi')$ lies in the normal subgroup of $F(S\cup\alpha)$ generated by $R$, and we can replace $\Pi$ and $\Pi'$ by $R$-faces, contradicting $(L,W,R)$-minimality.

e) An arc in the intersection of two $R$-faces is a piece, for otherwise, the faces are inverse and can be removed, contradicting minimality. Suppose for a path $a$ in the intersection of a block $\beta$ of a $W$-face $\Pi$ and an $R$-face $\Pi'$, the lift $a\to\beta\to\Gamma_R$ and a lift $a\to\partial\Pi'\to\Gamma_R$ coincide. We can glue onto $\beta$ an $R$-face $\Pi''$ whose label is inverse to that of $\Pi'$. This gluing occurs inside $\Pi$, i.e. we replace the $W$-face $\Pi$ by the union of $\Pi''$ and a new $W$-face $\tilde\Pi$ such that the exterior boundary of $\tilde\Pi\cup\Pi''$ equals the boundary of $\Pi$. Then we can remove $\Pi'$ and $\Pi''$ by Lemma~\ref{lem:reduction}, thus in total reducing the $(L,W,R)$-area, a contradiction.

f) For a contradiction, assume $\Pi$ is an innermost non-simply connected $R$-face, i.e. $\Pi$ encloses some simple disk diagram $\Delta$ in which every $R$-face is simply connected. Consider $\Delta$ on its own and glue on a face with label $\ell(\partial\Delta)$ to obtain a simple spherical diagram $\Delta'$. The proof of claim 2 will show that $\Delta'$ violates the curvature formula (Lemma~\ref{lem:curvature}) using the fact that every $R$-face of $\Delta$ is simply connected. We recommend first considering the proof of claim 2 and then going back to the following paragraph.

We only need to make one adaption in the proof of claim 2 when considering $\Delta'$: When deleting vertices of degree 2 in $\Delta'$, we do not delete the vertex $v$ of (possibly) degree 2 that, in $D$, lies in the intersection of $\Delta$ and $\Pi$ and is traversed twice in $\partial \Pi$. This will be necessary because an arc in $\partial \Delta$ in the diagram $\Delta$ containing $v$ in its interior may not be a piece. All other arcs in $\partial \Delta$ are pieces by claim 1e). The resulting curvature for $\Delta'$ is at most $(3-2)+\frac{1}{2} (6-1)<6$: The contribution $(3-2)$ comes from the possible degree 2 vertex, and the contribution $\frac{1}{2} (6-1)$ comes from the degree at least 1 face that we glued on.

\vspace{10pt}

\noindent{\bf Claim 2.} The existence of $D$ contradicts the curvature formula.

We construct a spherical diagram out of $D$: First we glue a new face with boundary label $w$ onto $D$ to obtain a simple spherical diagram $\Sigma$.

In $\Sigma$, each face whose boundary is a word in $\alpha\sqcup\alpha^{-1}$ is surrounded by $W$-faces by claims 1a) and 1b). For each $\alpha$-face $\Pi$, we add in a new vertex in the interior of $\Pi$, the \emph{apex}. Then we remove all boundary edges of $\Pi$ to obtain $\Pi'$, a face whose boundary is made up of the blocks that were contained in the boundary of the $W$-faces sharing edges with $\Pi$. Now we connect each vertex that lies at the end of a block to the apex by gluing in an edge, a so-called \emph{cone-edge}. (See Figure \ref{figure:wheel}.) 
We call the subdiagram of faces incident at the apex a \emph{wheel}, and each face in the wheel a \emph{cone}. Each cone has a boundary made up of two consecutive cone-edges and a block.

\begin{figure}\label{figure:wheel}
\begin{tikzpicture}[line cap=round,line join=round,x=.3cm,y=.3cm,scale=.5]
\draw (0,-5)-- (0,-8);
\draw (0,-8)-- (4.59,-6.55);
\draw (4.59,-6.55)-- (3.54,-3.54);
\draw (3.54,-3.54)-- (6.55,-4.59);
\draw (6.55,-4.59)-- (7.88,-1.39);
\draw (7.88,-1.39)-- (5,0);
\draw (5,0)-- (7.88,1.39);
\draw (7.88,1.39)-- (6.55,4.59);
\draw (6.55,4.59)-- (3.54,3.54);
\draw (3.54,3.54)-- (4.59,6.55);
\draw (4.59,6.55)-- (1.39,7.88);
\draw (1.39,7.88)-- (0,5);
\draw (0,5)-- (-1.39,7.88);
\draw (-1.39,7.88)-- (-4.59,6.55);
\draw (-4.59,6.55)-- (-3.54,3.54);
\draw (-3.54,3.54)-- (-6.55,4.59);
\draw (-6.55,4.59)-- (-7.88,1.39);
\draw (-7.88,1.39)-- (-5,0);
\draw (-5,0)-- (-7.88,-1.39);
\draw (-7.88,-1.39)-- (-6.55,-4.59);
\draw (-6.55,-4.59)-- (-3.54,-3.54);
\draw (-3.54,-3.54)-- (-4.59,-6.55);
\draw (-4.59,-6.55)-- (0,-8);
\draw(0,-2) circle (.9cm);
\fill   (0,-5) circle (1.5pt);
\fill   (3.54,-3.54) circle (1.5pt);
\fill   (5,0) circle (1.5pt);
\fill   (3.54,3.54) circle (1.5pt);
\fill   (0,5) circle (1.5pt);
\fill   (-3.54,3.54) circle (1.5pt);
\fill   (-5,0) circle (1.5pt);
\fill   (-3.54,-3.54) circle (1.5pt);
\fill   (0,-8) circle (1.5pt);
\fill   (4.59,-6.55) circle (1.5pt);
\fill   (6.55,-4.59) circle (1.5pt);
\fill   (7.88,-1.39) circle (1.5pt);
\fill   (7.88,1.39) circle (1.5pt);
\fill   (6.55,4.59) circle (1.5pt);
\fill   (4.59,6.55) circle (1.5pt);
\fill   (1.39,7.88) circle (1.5pt);
\fill   (-1.39,7.88) circle (1.5pt);
\fill   (-4.59,6.55) circle (1.5pt);
\fill   (-6.55,4.59) circle (1.5pt);
\fill   (-7.88,1.39) circle (1.5pt);
\fill   (-7.88,-1.39) circle (1.5pt);
\fill   (-6.55,-4.59) circle (1.5pt);
\fill   (-4.59,-6.55) circle (1.5pt);
\end{tikzpicture}
\hspace{2cm}
\begin{tikzpicture}[line cap=round,line join=round,x=.3cm,y=.3cm,scale=.5]
\draw (0,-5)-- (0,-8);
\draw (0,-8)-- (4.59,-6.55);
\draw (4.59,-6.55)-- (3.54,-3.54);
\draw (3.54,-3.54)-- (6.55,-4.59);
\draw (6.55,-4.59)-- (7.88,-1.39);
\draw (7.88,-1.39)-- (5,0);
\draw (5,0)-- (7.88,1.39);
\draw (7.88,1.39)-- (6.55,4.59);
\draw (6.55,4.59)-- (3.54,3.54);
\draw (3.54,3.54)-- (4.59,6.55);
\draw (4.59,6.55)-- (1.39,7.88);
\draw (1.39,7.88)-- (0,5);
\draw (0,5)-- (-1.39,7.88);
\draw (-1.39,7.88)-- (-4.59,6.55);
\draw (-4.59,6.55)-- (-3.54,3.54);
\draw (-3.54,3.54)-- (-6.55,4.59);
\draw (-6.55,4.59)-- (-7.88,1.39);
\draw (-7.88,1.39)-- (-5,0);
\draw (-5,0)-- (-7.88,-1.39);
\draw (-7.88,-1.39)-- (-6.55,-4.59);
\draw (-6.55,-4.59)-- (-3.54,-3.54);
\draw (-3.54,-3.54)-- (-4.59,-6.55);
\draw (-4.59,-6.55)-- (0,-8);
\draw (-3.54,-3.54)-- (3.54,3.54);
\draw (-5,0)-- (5,0);
\draw (-3.54,3.54)-- (3.54,-3.54);
\draw (0,5)-- (0,-5);
\fill  (0,-5) circle (1.5pt);
\fill   (3.54,-3.54) circle (1.5pt);
\fill   (5,0) circle (1.5pt);
\fill   (3.54,3.54) circle (1.5pt);
\fill   (0,5) circle (1.5pt);
\fill   (-3.54,3.54) circle (1.5pt);
\fill   (-5,0) circle (1.5pt);
\fill   (-3.54,-3.54) circle (1.5pt);
\fill   (0,-8) circle (1.5pt);
\fill   (4.59,-6.55) circle (1.5pt);
\fill   (6.55,-4.59) circle (1.5pt);
\fill   (7.88,-1.39) circle (1.5pt);
\fill   (7.88,1.39) circle (1.5pt);
\fill   (6.55,4.59) circle (1.5pt);
\fill   (4.59,6.55) circle (1.5pt);
\fill   (1.39,7.88) circle (1.5pt);
\fill   (-1.39,7.88) circle (1.5pt);
\fill   (-4.59,6.55) circle (1.5pt);
\fill   (-6.55,4.59) circle (1.5pt);
\fill   (-7.88,1.39) circle (1.5pt);
\fill   (-7.88,-1.39) circle (1.5pt);
\fill   (-6.55,-4.59) circle (1.5pt);
\fill   (-4.59,-6.55) circle (1.5pt);
\fill   (0,0) circle (1.5pt);
\end{tikzpicture}
\caption{Left: An $\alpha$-face corresponding to a word in $\alpha\sqcup\alpha^{-1}$ of length 1 is surrounded by 1 $W$-face. The exterior boundary of the $W$-face decomposes into 8 blocks. Right: The $\alpha$-face and $W$-face are replaced by a wheel. The vertex in the wheel coming from the intersection of the two boundary blocks may have degree 2. The vertices coming from the endpoints of interior blocks must have degree at least 3, since they come from gluing together vertices with disjoint support.}
 \end{figure}
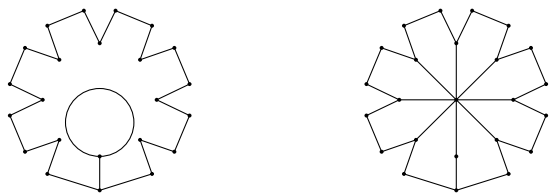

Suppose there are cones $\Pi_1$ and $\Pi_2$ with corresponding blocks $\beta_1$ and $\beta_2$ such that an arc $a$ in $\beta_1\cap\beta_2$ is not a piece, and suppose $\Pi_1\neq\Pi_2$. Then both $\beta_1$ and $\beta_2$ lift to the same $\gamma_r$. We now remove the arc $a$, turning $\Pi_1$ and $\Pi_2$ into a new face $\Pi$. The labelled subpaths of the boundary of $\Pi$ admit label-preserving maps to $\gamma_r$, i.e. they are labelled by subwords of a relator. (To be more precise, every reduced subword is a subword of a relator.) Hence, by Claim 1d), any arc in the intersection of an $R$-face with $\Pi$ is a piece. By Claim 1e), $\Pi$ has no more than two consecutive cone-edges, i.e. any adjacent pair of cone-edges in $\Pi$ is separated from any other pair by non-empty paths.

We call a face that arises from merging multiple cones a \emph{star}. We iterate the above procedure as follows: Whenever an arc in the intersection of two distinct cones, or in the intersection of a cone and a star is not a piece, remove that arc. Note that, by claims 1d) and 1e), still, an arc in the intersection of an $R$-face with a star is a piece, and any adjacent pair of cone-edges in a star is separated from any other pair by non-trivial paths.
In the end, the resulting spherical diagram has the following properties:
\begin{itemize}
 \item Each arc in the boundary of an $R$-face is a piece, i.e. its boundary consists of no fewer than 6 arcs.
 \item Each cone has a boundary made up of 2 cone-edges and a block $\beta$. If $\beta$ does not self-intersect nontrivially in an arc, then every arc contained in $\beta$ is a piece, i.e. $\beta$ consists of no fewer than 3 arcs. If $\beta$ does self-intersect in an arc, then it consists of no fewer than 3 arcs in any case, since we are counting arcs with multiplicity.
\end{itemize}

By our assumptions on the supports of vertices, every vertex that is endpoint of an interior block has degree at least 3. We deduce for the boundaries of cones:
\begin{itemize}
 \item A cone coming from an interior block has at least 5 arcs.
 \item A star coming from interior blocks has at least 6 arcs.
 \item A cone coming from a boundary block has at least 4 arcs.
 \item A star coming from boundary blocks has at least 4 arcs.
\end{itemize}

We now iteratively remove all vertices of degree 2, thus replacing each arc by a single edge. Denote the resulting spherical 2-complex by $\Sigma'$ and consider the curvature formula (Lemma \ref{lem:curvature}). 

All (images of) $R$-faces have degree at least $6$ by the $C(6)$-assumption and thus contribute nonpositively to curvature. Any face that is not an $R$-face is a cone or a star and thus is incident at an apex. Consider an apex $a$. Each face incident at $a$ that comes from a boundary block has degree at least 4. Each face incident at $a$ that comes from an interior block has degree at least 5. By construction, $k:=d(a)\geqslant 8$. The number of faces incident at $a$ that come from boundary blocks is at most $\frac{k}{4}$, and the number of faces that come from interior blocks is at most $\frac{3k}{4}$. Thus the subdiagram incident at $a$ contributes at most $3-k+\frac{3k}{4}\frac{1}{2}(6-5)+\frac{k}{4}\frac{1}{2}(6-4)=3-\frac{3k}{8}\leqslant 0$ to the right-hand side of the curvature formula. We now sum over all apices (leaving out faces that have already been counted, which does not change the fact that the contribution to the right-hand side is nonpositive), to get:
$$0\geqslant \sum_{v\in\Sigma^{'(0)}}(3-d(v))+\frac{1}{2}\sum_{\Pi\in\Sigma^{'(2)}}(6-d(\Pi)).$$
This is a contradiction to the curvature formula, whence $N=\langle N\rangle^G\cap F$. 

\vspace{12pt}

To prove that $F$ is free and freely generated by the injective image of the set $\alpha$, let $w$ be a cyclically reduced freely nontrivial word in $\alpha\sqcup \alpha^{-1}$ with a diagram $D$ over $\langle S,\alpha\mid W,R\rangle$ for $w$ of minimal $(W,R)$-area and minimal number of edges (as above). Then $D$ is a simple disk diagram. We can again glue on a face $\Pi$ whose label is $w$ to obtain a simple spherical diagram. Replacing $\Pi$ by a wheel as above again gives a contradiction to the curvature formula.
\end{proof}
\section{SQ-universality of graphical $Gr_*(6)$-groups}\label{section:graphical}

In this section, we extend Theorem~\ref{thm:c6} to graphical small cancellation presentations over free products.

\begin{thm}\label{thm:gr6} 
 Let $\Gamma$ be a $Gr_*(6)$-labelled graph over a free product of infinite groups that has at least 16 pairwise non-isomorphic finite components with nontrivial fundamental groups. Then $G(\Gamma)$ is SQ-universal.
\end{thm}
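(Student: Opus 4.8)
The plan is to show that $G(\Gamma)$ has property $F(2)$ and then invoke the Proposition of \cite{Ols}: I will exhibit inside $G(\Gamma)$ a free subgroup of rank $2$ with the congruence extension property, following the proof of Theorem~\ref{thm:c6} almost line by line. The preparatory move is the promised new viewpoint on small cancellation over free products. Writing the free product as $\freeproduct_{i\in I} G_i$ with fixed generating sets $S_i$, replace $\Gamma$ by the disjoint union $\hat\Gamma:=\Gamma\sqcup\bigsqcup_{i\in I}\Cay(G_i,S_i)$, viewed as a graph labelled over $\bigsqcup_i (S_i\sqcup S_i^{-1})$. Since the closed paths read on $\Cay(G_i,S_i)$ are exactly the relations of $G_i$, there is a canonical isomorphism $G(\hat\Gamma)\cong G(\Gamma)$ of quotients of the free group on $\bigsqcup_i S_i$, and the $Gr_*(6)$-condition on $\Gamma$ over the free product becomes a graphical $C(6)$-type condition on $\hat\Gamma$, the only novelty being that diagrams over $\hat\Gamma$ carry, besides ordinary \emph{$\Gamma$-faces}, also \emph{$G_i$-faces} whose boundary reads a closed path in $\Cay(G_i,S_i)$. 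With this dictionary, van Kampen's Lemma, the reduction Lemma~\ref{lem:reduction}, and the curvature formula Lemma~\ref{lem:curvature} all apply to diagrams over $\hat\Gamma$, and throughout Section~\ref{section:classical} one simply reads ``a finite component $\Gamma_n$ of $\Gamma$'' for ``$\gamma_r$''.

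Next I would prove the analogue of Lemma~\ref{lem:find}. Fix $16$ pairwise non-isomorphic finite components $\Gamma_1,\dots,\Gamma_{16}$ of $\Gamma$ with nontrivial fundamental group, and in each choose a nontrivial reduced closed path $c_n$. Because these components are finite and pairwise non-isomorphic (so that non-isomorphism plays the role that distinctness of relators plays classically, while, as in Remark~\ref{remark:finite_presentation}, if the cycles of some $\Gamma_n$ fail to be products of pieces then $\Gamma_n$ already contains vertices at piece-distance $\infty$), the analogue of Lemma~\ref{lem:piece_distance} holds: for every $x\in\Gamma_n$ there is $y\in\Gamma_n$ with $d_p(x,y)\geq 3$, piece-distance now being computed in $\hat\Gamma$. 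I redefine the \emph{support} of a vertex $v$ to be the set of free factors $G_i$ for which an $S_i$-labelled edge is incident at $v$; then the condition $\supp(y_n)\cap\supp(x_{n+1})=\emptyset$ ensures that concatenating halves of distinct components produces a word that is reduced in the free product at the seam, so that no syllable collapse occurs there. Running the inductive selection exactly as in the proof of Lemma~\ref{lem:find} (with the case analysis on $\supp(y_K)$ replaced by an analysis of which free factor(s) the exceptional ``blocks'' live in, and with the infiniteness of the $G_i$, which as in the classical free-product theory also prevents $G(\Gamma)$ from being virtually cyclic, supplying the necessary room) yields relators $r_1,\dots,r_{16}$, i.e.\ chosen cycles $c_n$, together with vertices $x_n,y_n\in\Gamma_n$.

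Now define $\alpha_1,\alpha_2$ exactly as in Section~\ref{section:classical}, via
$W_i:=\{\alpha_i^{-1}\ell(p_{8i-7})\cdots\ell(p_{8i})\mid p_k\text{ a simple path from }x_k\text{ to }y_k\}$,
and claim that the image of $\alpha=\{\alpha_1,\alpha_2\}$ in $G(\Gamma)$ freely generates a free subgroup $F$ with the congruence extension property; proving this claim completes the theorem. The argument copies the proof of Proposition~\ref{prop:main_classical}: take a minimal $(L,W,\mathcal R)$-area diagram $D$ witnessing the hypothetical failure, where $\mathcal R$-faces are now the $\Gamma$-faces and the $G_i$-faces; establish the analogue of Claim~1(a)--(f); glue on a face to obtain a simple spherical diagram; replace each $\alpha$-face by a wheel of cones; merge cones across non-pieces into stars; delete vertices of degree $2$; and derive a contradiction to the curvature formula exactly as in Claim~2, the arithmetic being unchanged ($\Gamma$-faces have degree $\geq 6$; cones from interior blocks have $\geq 5$ sides and from boundary blocks $\geq 4$; apices have degree $k\geq 8$; each apex contributes $\leq 3-\tfrac{3k}{8}\leq 0$ to the curvature formula).

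The step I expect to be the main obstacle is fitting the $G_i$-faces into the curvature count, since such a face may have short boundary (as short as length $3$) and would then contribute a positive term $\tfrac12(6-d(\Pi))$. This is handled by two structural facts, each of which must be proved with some care. First, in a reduced diagram no two $G_i$-faces are adjacent, because the concatenation of two relations of $G_i$ is again a relation of $G_i$ and Lemma~\ref{lem:reduction} merges them; hence every interior edge of a $G_i$-face lies on a $\Gamma$-face. Second, under the $Gr_*(6)$-condition a maximal common subpath of a $\Gamma$-face and a $G_i$-face is a single $S_i$-syllable, hence a piece. Consequently each $G_i$-face, together with the ring of $\Gamma$-faces surrounding it, can be absorbed into those $\Gamma$-faces (equivalently, the $G_i$-face can be grouped with its neighbours and the group shown to contribute nonpositively), exactly as cones are merged into stars and as the proof of Proposition~\ref{prop:main_classical} treats an $R$-face that shares a sub-piece with a $W$-face. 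Making this absorption precise, and checking that the selection argument above still terminates when the exceptional ``maximal block'' phenomenon is distributed over several free factors, is where the genuine work lies; everything else transfers mechanically from Section~\ref{section:classical}.
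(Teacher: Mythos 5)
Your top-level strategy (property $F(2)$ via the Section~\ref{section:classical} machinery, sixteen components, the words $W_i$, the spherical-diagram/curvature argument) is the same as the paper's, but the way you encode the free product structure is different and it is precisely there that the argument has a genuine gap. The paper's ``new viewpoint'' is the \emph{completion} $\overline\Gamma$ of Definition~\ref{defi:completion}: the Cayley graphs $\Cay(G_i,S_i)$ are attached \emph{along the edges of} $\Gamma$, the relator set $R$ is taken to be the labels of \emph{all} closed paths in $\overline\Gamma$, and the blocks are labels of \emph{all} paths in $\overline\Gamma$ from $x_k$ to $y_k$. With these choices, any face whose boundary label is trivial in $\freeproduct_{i\in I}G_i$ can first be split into faces lifting into attached Cayley graphs (Lemma~\ref{lem:reduction}) and then \emph{merged into a neighbouring $R$- or $W$-face}, because the result of splicing a $\Cay(G_i,S_i)$-loop into a closed path of $\overline\Gamma$ is again a closed path of $\overline\Gamma$; minimality then eliminates such faces entirely, so every surviving face has a label nontrivial in the free product and hence at least $6$ essential pieces on its boundary, and interior arcs are essential pieces because arcs that essentially originate from $\Gamma$ can be removed. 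In your model $\hat\Gamma=\Gamma\sqcup\bigsqcup_i\Cay(G_i,S_i)$ this merging is unavailable: splicing a $G_i$-relation into a closed path of $\Gamma$ does not give a closed path of $\hat\Gamma$, so a $G_i$-face cannot be absorbed into a neighbouring $\Gamma$-face without enlarging the relator set --- which is exactly what the completion does. Your proposed substitute, the ``absorption'' of each $G_i$-face together with its ring of neighbours, is exactly the unresolved hard step (you say so yourself), and the structural fact you offer in its support does not do the job: a maximal common arc of a $\Gamma$-face and a $G_i$-face is a path labelled in a single factor and may be long; calling it ``a piece'' of $\hat\Gamma$ is true only in a naive sense that the $Gr_*(6)$-hypothesis does not control, since that hypothesis bounds \emph{essential pieces of $\overline\Gamma$}, not pieces of the disjoint union. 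In particular a short $G_i$-face (boundary length $3$, say) contributes positively to the curvature formula, and nothing you prove forces the surrounding $\Gamma$-faces to have the excess degree needed to offset it.

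A second, smaller gap of the same nature: your plan to rerun the selection argument of Lemma~\ref{lem:find} ``with piece-distance computed in $\hat\Gamma$'' uses the wrong notion of piece, and the elaborate case analysis there is not what the paper does in the graphical setting. Instead, the paper proves Lemma~\ref{lem:graphial_support}: because each component of $\overline\Gamma$ is finite (finitely many non-interior vertices) while each attached $\Cay(G_i,S_i)$ is infinite, every edge of such a component is an \emph{essential} piece, and for every $x$ one finds $y$ at essential-piece-distance at least $3$ lying in the \emph{interior of an attached Cayley graph}; disjointness of supports of $y_n$ and $x_{n+1}$ (needed so that wheel vertices have degree at least $3$ in Claim~2) is then immediate by choosing these interior vertices in Cayley graphs of different factors. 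Your hypotheses (infiniteness of the $G_i$, finiteness of the components) enter through this lemma, not through a rerun of the classical block analysis; without the completion you have neither the correct piece notion for this step nor the ``all paths from $x_k$ to $y_k$'' flexibility in $W$ that the later merging arguments (the analogues of Claims~1d)--1e)) require.
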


We say two components are non-isomorphic if their completions (see Definition~\ref{defi:completion}) are non-isomorphic as labelled graphs. We say a component $\Gamma_0$ has non-trivial fundamental group if the set of labels of closed paths in $\Gamma_0$ is nontrivial in the free product.

We first recall definitions from graphical small cancellation theory presented in \cite{Gru} and then extend them to presentations over free products.

\subsection{Graphical small cancellation conditions}\label{subsection:definitions_graphical}
Recall Definition~\ref{defi:labelling} of a labelling of a graph $\Gamma$. A labelling is \emph{reduced} if the labels of reduced paths are freely reduced words. The \emph{group defined by} $\Gamma$ is given by the following presentation:
$$G(\Gamma):=\langle S\mid\text{labels of simple closed paths in }\Gamma\rangle.$$

\begin{defi}[Essential] Let $p$ and $\Gamma$ be labelled graphs, and let $\phi_1:p\to\Gamma$ and $\phi_2:p\to\Gamma$ be label-preserving graph homomorphisms. We say $\phi_1$ and $\phi_2$ are \emph{essentially equal} (or \emph{essentially coincide}) if there exists a label-preserving automorphism $\psi:\Gamma\to\Gamma$ such that $\phi_2=\psi\circ\phi_1$. Otherwise, $\phi_1$ and $\phi_2$ are \emph{essentially distinct}. 
\end{defi}

We will use the following notion of \emph{piece}. Note that a piece in the classical sense (as used in Section~\ref{section:classical}) is an essential piece in the graphical sense.

\begin{defi}[Piece]\label{defi:piece} Let $\Gamma$ be a labelled graph. 
\begin{itemize}
 \item A \emph{piece} is a labelled path $p$ (considered as labelled graph) for which there exist two distinct label-preserving graph homomorphisms $p\to\Gamma$. 
 \item An \emph{essential piece} is a labelled path $p$ for which there exist two essentially distinct label-preserving graph homomorphisms $p\to\Gamma$.
\end{itemize}
\end{defi}

We give the graphical small cancellation conditions stated in \cite{Gru}. A path is non-trivial if it is not 0-homotopic. 
\begin{defi} Let $n\in\N$ and $\lambda>0$. Let $\Gamma$ be a labelled graph. We say $\Gamma$ satisfies
\begin{itemize}
 \item the \emph{graphical $C(n)$-condition} (respectively \emph{graphical $Gr(n)$-condition}) if the labelling is reduced and no nontrivial closed path is concatenation of fewer than $n$ (essential) pieces,
 \item the \emph{graphical $C'(\lambda)$-condition} (respectively \emph{graphical $Gr'(\lambda)$-condition}) if the labelling is reduced and every (essential) piece $p$ that is a subpath of a nontrivial simple closed path $\gamma$ satisfies $|p|<\lambda|\gamma|$.  
\end{itemize}
\end{defi}

A \emph{diagram over $\Gamma$} is a diagram over the presentation $\langle S\mid \text{labels of simple closed}$ $\text{paths in }\Gamma\rangle$. Let $p$ be path in a diagram over $\Gamma$ that lies in the intersection of faces $\Pi$ and $\Pi'$. There exist \emph{lifts} $p\to\partial\Pi\to\Gamma$ and $p\to\partial\Pi'\to\Gamma$, which are essentially unique if $\Gamma$ is $Gr(2)$-labelled. We say $p$ \emph{essentially originates from $\Gamma$} if these lifts essentially coincide. Note that if an interior arc does not essentially originate from $\Gamma$, then it is an essential piece. The following is a version of van Kampen's Lemma stated in \cite[Lemma 2.13]{Gru}:

\begin{lem}\label{lem:graphical_basic} Let $\Gamma$ be a $Gr(6)$-labelled graph, and let $w$ be a word in $M(S)$. Then $w$ represents the identity in $G(\Gamma)$ if and only if there exists a diagram $D$ over $\Gamma$ such that no interior edge of $D$ essentially originates from $\Gamma$ and such that $D$ has boundary word $w$.
\end{lem}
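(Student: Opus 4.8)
The plan is to prove the two implications separately. The ``if'' direction is immediate: if $D$ is a diagram over $\Gamma$ with boundary word $w$, then every face of $D$ has boundary word a label of a simple closed path in $\Gamma$, i.e.\ a defining relator of $G(\Gamma)$, so the version of van Kampen's Lemma recalled above gives $w=1$ in $G(\Gamma)$; the condition on interior edges is not used here. All of the content is in the ``only if'' direction.

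For the ``only if'' direction I would first enlarge the relator set. Decomposing a closed path of $\Gamma$ at a repeated vertex writes its label as a product of a conjugate of a shorter closed path's label with another closed path's label; iterating reaches simple closed paths (whose labels are nontrivial reduced words, as the labelling is reduced), so the normal closure in $F(S)$ of the labels of the simple closed paths of $\Gamma$ equals the normal closure of the set $\mathcal{R}$ of labels of \emph{all} closed paths of $\Gamma$, whence $G(\Gamma)=\langle S\mid\mathcal{R}\rangle$. By the version of van Kampen's Lemma above there is a singular disk diagram over $\langle S\mid\mathcal{R}\rangle$ with boundary word $w$ (the case of a freely trivial $w$ being vacuous). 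Among all such diagrams I would fix one, $D$, of minimal complexity, where complexity is measured first by the number of faces and then by the number of edges.

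The heart of the matter is to show that in this minimal $D$ no interior edge essentially originates from $\Gamma$; granting this, a concluding untangling step turns $D$ into the diagram required by the Lemma. So suppose, for a contradiction, that $e$ is an interior edge in the intersection of faces $\Pi$ and $\Pi'$ whose lifts $e\to\partial\Pi\to\Gamma$ and $e\to\partial\Pi'\to\Gamma$ essentially coincide (these lifts are essentially unique because $\Gamma$ is $Gr(2)$-labelled, which follows from the $Gr(6)$-hypothesis). Post-composing the lift through $\Pi$ with a label-preserving automorphism of $\Gamma$ realising this coincidence, I may assume the two lifts of $e$ agree as paths in $\Gamma$; then $\ell(\partial\Pi)\ell(\partial\Pi')$ read from a vertex on $e$ is the label of a single closed path in $\Gamma$, hence lies in $\mathcal{R}$. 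If $\Pi\neq\Pi'$ I collapse the maximal arc through $e$ along which the two lifts agree and fill the resulting region by one $\mathcal{R}$-face (cf.\ Lemma~\ref{lem:reduction}); this yields a diagram with boundary word $w$ but one fewer face (an incidental pinch, if $\Pi$ and $\Pi'$ meet elsewhere, is removed first by a standard topological reduction), contradicting minimality. The case $\Pi=\Pi'$, where $\Pi$ meets $e$ on both sides, is handled by an analogous cutting move, using that $\ell(\partial\Pi)$, suitably based, again lies in $\mathcal{R}$; registering that this lowers the complexity may require refining the complexity measure, and it is here that the technical care concentrates. With the claim in hand I pass to a diagram over $\Gamma$: for each face whose boundary label is the label of some non-simple closed path $c$, I pick a repeated vertex $v$ of $c$, write $c=c_1c_2$ with $c_1,c_2$ closed at $v$, and replace that face by two $2$-cells with boundary labels $c_1$ and $c_2$ meeting only at a single vertex. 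Each such step strictly shortens the closed paths involved, so after finitely many steps every face is labelled by a simple closed path; no new edges are created, so every interior edge of the resulting diagram was already an interior edge of $D$ between sub-faces of the same pair of faces and therefore still does not essentially originate from $\Gamma$, while the boundary word remains $w$.

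The main obstacle is this reduction step: one needs a complexity measure on diagrams over $\langle S\mid\mathcal{R}\rangle$ that strictly decreases under every move eliminating an interior edge which essentially originates from $\Gamma$, and one must verify that such a move genuinely exists in every configuration---in particular for a face meeting an interior edge on both sides, and in the presence of pinches or low-degree vertices, where collapsing an arc has to be combined with topological simplifications. Passing first to the equivalent presentation $\langle S\mid\mathcal{R}\rangle$ by the labels of all closed paths, rather than only the simple ones, is exactly what keeps the merged faces legitimate during the reduction, so that the return to simple closed paths can be safely deferred to the final, edge-preserving untangling step.
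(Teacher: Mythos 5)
You should first be aware that the paper does not prove this statement at all: it is quoted verbatim from \cite[Lemma 2.13]{Gru}, so there is no in-paper argument to match your proposal against. Measured against the standard reduction argument (which is what the cited proof is, and what you reproduce), your "if'' direction, the passage to the presentation whose relators are labels of \emph{all} closed paths, the minimal counterexample, the merge of two \emph{distinct} faces across an interior edge whose lifts essentially coincide (the merged boundary word is literally the label of a closed path of $\Gamma$, so Lemma~\ref{lem:reduction} with that single relator applies), and the final subdivision of non-simple faces using essential uniqueness of lifts (from $Gr(2)$) are all sound in outline, modulo routine care: when $\overline\Pi\cap\overline\Pi'$ is disconnected the merged region is not a disk, so you need the full Olshanskii surgery and must check the number of replacement faces can be taken $\leqslant 1$ for your face count to drop, and the final splitting can leave faces labelled by backtracking closed paths (freely trivial, not simple), which have to be removed without creating new essentially originating edges.

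The genuine gap is exactly the case you defer, $\Pi=\Pi'$, i.e.\ an interior edge $e$ both of whose sides lie on the same face, and it cannot be waved through as "an analogous cutting move.'' First, your normalisation step fails there: the two lifts of $e$ are both restrictions of the single, essentially unique lift of $\partial\Pi$, so post-composing "the lift through $\Pi$'' with an automorphism moves both occurrences simultaneously; what you actually have is $\partial\Pi=e\,a\,e^{-1}b$ with an automorphism $\psi$ of $\Gamma$ carrying the lifted image of the first occurrence to that of the second. Second, deleting $e$ does not merge two disk faces into one $\mathcal{R}$-face: it turns $\Pi$ into an \emph{annular} region whose two boundary circles are the closed paths $a$ and $b$ of the diagram, and neither $\ell(a)$ nor $\ell(b)$ need be the label of a closed path in $\Gamma$ (their lifts run between a vertex and its $\psi$-image), so there is no evident refilling; the only word you know lies in $\mathcal{R}$ is $\ell(\partial\Pi)$ itself, and refilling with it just restores the face you removed. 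Hence no complexity-decreasing move has been exhibited, and "refining the complexity measure'' is precisely the unsolved problem, not a technicality. Note also that this case cannot be defined away: the paper's later use of the lemma needs every interior arc that does not essentially originate from $\Gamma$ to be an essential piece, including arcs along which a face meets itself; and it is presumably here that the $Gr(6)$ hypothesis — which your argument never uses beyond $Gr(2)$ — has to enter, since a face adjacent to itself across an edge is a close cousin of the non-simply-connected faces that the paper itself can only exclude by a curvature argument (claim 1f of Proposition~\ref{prop:main_classical}).
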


\subsection{Graphical small cancellation over free products}\label{subsection:definitions_products}

Given groups $G_i,i\in I$ with generating sets $S_i,i\in I$, let $\Gamma$ be a graph labelled by the set $\sqcup_{i\in I}S_i$. In this situation, we define $G(\Gamma)$ to be the quotient of $\freeproduct_{i\in I}G_i$ by the normal subgroup generated by all labels of closed paths in $\Gamma$. 

Graphical small cancellation groups over free products were first studied in \cite{Ste}. The definitions we present here are slightly more general and provide a convenient way to skip notions such as ``reduced forms'' and ``semi-reduced forms'' used in standard definitions of small cancellation conditions over free products. These notions are used due to the fact that when writing an element of $G_i$ in the alphabet $S_i$, one has to \emph{choose} a word. While in a free group, every group element admits a single canonical representative, in an arbitrary group this is not the case. When constructing a graphical presentation over a free product, these choices give rise to distinct labelled graphs, as illustrated in Figure~\ref{figure:choices}. This is discussed in detail in \cite{Ste} where the terminology ``AO-move'' and a notion of equivalence of graphs are used. Our definition gives a single canonical object that contains all the possible choices.

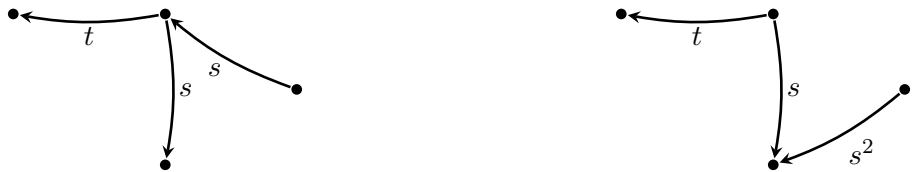
\begin{figure}\label{figure:choices}
 \begin{tikzpicture}[>=stealth,shorten <=2.5pt, shorten >=2.5pt,line width=1pt]
\coordinate (A) at (-9,1);
\coordinate (B) at (-7,1);
\coordinate (C) at (-7,-1);
\coordinate (BC) at (-5.27,0);
\fill (A) circle (2pt);
\fill (B) circle (2pt);
\fill (C) circle (2pt);
\fill (BC) circle (2pt);

\draw[->] (B) to [bend left=10] node [below=-.08cm] {\small $t$} (A);

\draw[->] (BC) to [bend left=10] node [below left=-.08cm] {\small $s$} (B);


\draw[->] (B) to [bend left=10] node [right=-.08cm] {\small $s$} (C);

\coordinate (A) at (-1,1);
\coordinate (B) at (1,1);
\coordinate (C) at (1,-1);
\coordinate (BC) at (2.73,0);
\fill (A) circle (2pt);
\fill (B) circle (2pt);
\fill (C) circle (2pt);
\fill (BC) circle (2pt);

\draw[->] (B) to [bend left=10] node [below=-.08cm] {\small $t$} (A);


\draw[->] (BC) to [bend left=10] node [below right=-.08cm] {\small $s^2$} (C);

\draw[->] (B) to [bend left=10] node [right=-.08cm] {\small $s$} (C);

\end{tikzpicture}
\caption{The group elements represented by the labels of paths between any two vertices are the same in both graphs.}
\end{figure}

\begin{defi}\label{defi:completion} Let $\Gamma$ be a graph labelled over $\sqcup_{i\in I}S_i$, where $S_i$ are generating sets of groups $G_i$. Denote by $\overline \Gamma$ the \emph{completion} of $\Gamma$ obtained as follows: Onto every edge labelled by $s_i\in S_i$, \emph{attach} a copy of $\Cay(G_i,S_i)$ along an edge of $\Cay(G_i,S_i)$ labelled by $s_i$. $\overline \Gamma$ is defined as the quotient of the resulting graph by the following equivalence relation: For edges $e$ and $e'$, we define $e\sim e'$ if $e$ and $e'$ have the same label and if there exists a path from $\iota e$ to $\iota e'$ whose label is trivial in $\freeproduct_{i\in I}G_i$. 
\end{defi}

As a motivation for the definition, one may consider a graph labelled over a free group as a graph with an immersion into a $K(G,1)$-space of a free group, i.e. a wedge of circles. Similarly, we consider a graph labelled over a free product as a graph with an immersion into a $K(G,1)$-space of a free product, i.e. a wedge of presentation complexes of the $G_i$.

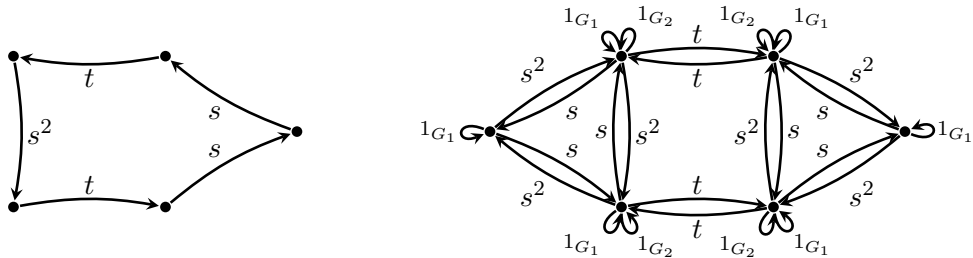
\begin{figure}
\begin{tikzpicture}[>=stealth,shorten <=2.5pt, shorten >=2.5pt,line width=1pt]
\coordinate (A) at (-9,1);
\coordinate (B) at (-7,1);
\coordinate (C) at (-7,-1);
\coordinate (D) at (-9,-1);
\coordinate (AD) at (-10.73,0);
\coordinate (BC) at (-5.27,0);
\fill (A) circle (2pt);
\fill (B) circle (2pt);
\fill (C) circle (2pt);
\fill (D) circle (2pt);
\fill (BC) circle (2pt);

\draw[->] (B) to [bend left=10] node [below=-.08cm] {\small $t$} (A);

\draw[->] (BC) to [bend left=10] node [below left=-.08cm] {\small $s$} (B);

\draw[->] (C) to [bend left=10] node [above left=-.08cm] {\small $s$} (BC);

\draw[->] (A) to [bend left=10] node [right=-.08cm] {\small $s^2$} (D);

\draw[->] (D) to [bend left=10] node [above=-.08cm] {\small $t$} (C);

\coordinate (A) at (-1,1);
\coordinate (B) at (1,1);
\coordinate (C) at (1,-1);
\coordinate (D) at (-1,-1);
\coordinate (AD) at (-2.73,0);
\coordinate (BC) at (2.73,0);
\fill (A) circle (2pt);
\fill (B) circle (2pt);
\fill (C) circle (2pt);
\fill (D) circle (2pt);
\fill (BC) circle (2pt);
\fill (AD) circle (2pt);

\draw[->] (A) to [bend left=10] node [above=-.08cm] {\small $t$} (B);
\draw[->] (B) to [bend left=10] node [below=-.08cm] {\small $t$} (A);

\draw[->] (B) to [bend left=10] node [above right=-.08cm] {\small $s^2$} (BC);
\draw[->] (BC) to [bend left=10] node [below left=-.08cm] {\small $s$} (B);

\draw[->] (BC) to [bend left=10] node [below right=-.08cm] {\small $s^2$} (C);
\draw[->] (C) to [bend left=10] node [above left=-.08cm] {\small $s$} (BC);

\draw[->] (C) to [bend left=10] node [left=-.08cm] {\small $s^2$} (B);
\draw[->] (B) to [bend left=10] node [right=-.08cm] {\small $s$} (C);

\draw[->] (D) to [bend left=10] node [below left=-.08cm] {\small $s^2$} (AD);
\draw[->] (AD) to [bend left=10] node [above right=-.08cm] {\small $s$} (D);

\draw[->] (AD) to [bend left=10] node [above left=-.08cm] {\small $s^2$} (A);
\draw[->] (A) to [bend left=10] node [below right=-.08cm] {\small $s$} (AD);

\draw[->] (A) to [bend left=10] node [right=-.08cm] {\small $s^2$} (D);
\draw[->] (D) to [bend left=10] node [left=-.08cm] {\small $s$} (A);

\draw[->] (C) to [bend left=10] node [below=-.08cm] {\small $t$} (D);
\draw[->] (D) to [bend left=10] node [above=-.08cm] {\small $t$} (C);

\draw[->] (A) to [out=100,in=145,loop] node [above left=-.08cm] {\tiny $1_{G_1}$} (A);
\draw[->] (A) to [out=45,in=90,loop] node [above right=-.08cm] {\tiny $1_{G_2}$} (A);

\draw[->] (B) to [out=90,in=135,loop] node [above left=-.08cm] {\tiny $1_{G_2}$} (B);
\draw[->] (B) to [out=35,in=80,loop] node [above right=-.08cm] {\tiny $1_{G_1}$} (B);

\draw[->] (C) to [out=280,in=325,loop] node [below right=-.08cm] {\tiny $1_{G_1}$} (C);
\draw[->] (C) to [out=225,in=270,loop] node [below left=-.08cm] {\tiny $1_{G_2}$} (C);

\draw[->] (D) to [out=270,in=315,loop] node [below right=-.08cm] {\tiny $1_{G_2}$} (D);
\draw[->] (D) to [out=215,in=260,loop] node [below left=-.08cm] {\tiny $1_{G_1}$} (D);

\draw[->] (AD) to [out=157.5,in=202.5,loop] node [left=-.08cm] {\tiny $1_{G_1}$} (AD);
\draw[->] (BC) to [out=337.5,in=22.5,loop] node [right=-.08cm] {\tiny $1_{G_1}$} (BC);

\end{tikzpicture}

\caption{An example of $\Gamma$ and $\overline \Gamma$:  $G_1=\Z/3\Z$ and $S_1=G_1=\{1_{G_1},s,s^2\}$, and $G_2=\Z/2\Z$ and $S_2=G_2=\{1_{G_2},t\}$. A presentation for $G(\Gamma)$ is given by $\langle s,t\mid s^3,t^2,(st)^2\rangle$.
}
\label{figure:example_free_product}
\end{figure}

See Figure~\ref{figure:example_free_product} for an example. Note that $\overline \Gamma$ is a reduced labelled graph by definition. We use the same notion of (essential) piece as above. A path $p$ in $\overline \Gamma$ is \emph{locally geodesic} if every subpath of $p$ that is contained in one of the attached $\Cay(G_i,S_i)$ is geodesic. More generally, a labelled path is locally geodesic if it is the isomorphic image of a locally geodesic path in $\overline\Gamma$.

\begin{defi} Let $n\in\N$ and $\lambda>0$. Let $\Gamma$ be labelled over $\sqcup_{i\in I}S_i$, where $S_i$ are generating sets of groups $G_i$. We say $\Gamma$ satisfies
\begin{itemize}
 \item the \emph{graphical $C_*(n)$-condition} (respectively \emph{graphical $Gr_*(n)$-condition}) if every attached $\Cay(G_i,S_i)$ in $\overline \Gamma$ is an embedded copy of $\Cay(G_i,S_i)$ and in $\overline \Gamma$ no path whose label is nontrivial in $\freeproduct_{i\in I}G_i$ is concatenation of fewer than $n$ (essential) pieces,
 \item the \emph{graphical $C_*'(\lambda)$-condition} (respectively \emph{graphical $Gr_*'(\lambda)$-condition}) if every attached $\Cay(G_i,S_i)$ in $\overline \Gamma$ is an embedded copy of $\Cay(G_i,S_i)$ and in $\overline \Gamma$ every (essential) piece $p$ that is locally geodesic and that is a subpath of a simple closed path $\gamma$ such that the label of $\gamma$ is nontrivial in $\freeproduct_{i\in I}G_i$ satisfies $|p|<\lambda|\gamma|$.  
\end{itemize}
\end{defi}

The first parts of the conditions correspond to a notion of ``reducedness'' of the labelling: The label of a nontrivial closed path is either trivial in $\freeproduct_{i\in I}G_i$ or not contained in one of the $G_i$. Classical free product small cancellation \cite[Chapter V]{LS} considers the case that $\Gamma$ is a disjoint union of cycle graphs and $S_i=G_i$. Graphical free product small cancellation with respect to arbitrary length functions was first considered in \cite{Ste}. If the groups $G_i$ are infinite cyclic and each $S_i$ is a set containing a single generator of $G_i$, we recover stronger versions of the graphical small cancellation conditions from Section~\ref{subsection:definitions_graphical} as discussed in Remark~\ref{remark:differences}. Note that for the graphical $C_*(n)$- and $Gr_*(n)$-conditions, the choice of generating set $S_i$ used to construct $\overline\Gamma$ is irrelevant, i.e. we may take $S_i=G_i$.

The analogy of Lemma \ref{lem:graphical_basic} follows from the proofs of \cite[Lemma 2.13]{Gru} and \cite[Theorem 1.11]{Ste}. A \emph{diagram over $\overline \Gamma$} is a diagram where every face $\Pi$ either bears the label of a simple closed path in $\overline\Gamma$ that is nontrivial in $\freeproduct_{i\in I}G_i$, or $\Pi$ bears the label of a simple closed path in some $\Cay(G_i,S_i)$ and has no interior edge.

\begin{lem}\label{lem:graphical_product_basic}
 Let $\Gamma$ be a $Gr_*(6)$-labelled graph over $S=\sqcup_{i\in I} S_i$, where $S_i$ are generating sets of groups $G_i$, and let $w$ be a word in $M(S)$. Then $w$ represents the identity in $G(\Gamma)$ if and only if there exists a diagram $D$ over $\overline\Gamma$ such that no interior edge of $D$ essentially originates from $\Gamma$, such that every interior arc is locally geodesic, and such that $D$ has boundary word $w$.
\end{lem}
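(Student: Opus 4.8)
The plan is to follow the reduction scheme in the proof of \cite[Lemma~2.13]{Gru}, carried out in $\overline\Gamma$ in place of a graph labelled over a free group, while importing from \cite[Theorem~1.11]{Ste} the treatment of the attached Cayley graphs. The implication ``$\Leftarrow$'' is the routine half. If $D$ is a diagram over $\overline\Gamma$ with boundary word $w$, then reading $\partial D$ off along the faces of $D$ expresses $w$, in $F(S)$, as a product of conjugates of the boundary words of the faces. A $G_i$-face contributes a word that is trivial in $G_i\leqslant\freeproduct_{i\in I}G_i$, hence trivial in $G(\Gamma)$. A relator face contributes the label of a simple closed path in $\overline\Gamma$; since $\overline\Gamma$ is obtained from $\Gamma$ by attaching Cayley graphs, all of whose loops carry labels trivial in $\freeproduct_{i\in I}G_i$, and by identifying edges only along paths with such labels, every such label lies in the normal closure in $\freeproduct_{i\in I}G_i$ of the labels of closed paths in $\Gamma$, hence is trivial in $G(\Gamma)$ (compare \cite{Ste}). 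Thus $w=1$ in $G(\Gamma)$.

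For ``$\Rightarrow$'' I would start from the presentation $\langle\sqcup_{i\in I}S_i\mid\{\text{relators of }G_i\}_{i\in I},\,\{\text{labels of simple closed paths in }\Gamma\}\rangle$ of $G(\Gamma)$, to which van Kampen's Lemma associates a singular disk diagram $D_0$ for $w$ whose faces are either \emph{$\Gamma$-faces} (boundary word the label of a simple closed path in $\Gamma$) or \emph{$G_i$-relator faces} (boundary word a relator of some $G_i$). I would then convert $D_0$ into a diagram over $\overline\Gamma$: a $\Gamma$-face whose boundary label is trivial in $\freeproduct_{i\in I}G_i$ is re-filled by $G_i$-relator faces, and the boundary path of every surviving $\Gamma$-face is lifted to $\overline\Gamma$ and, if non-simple, cut at a repeated vertex into simple closed paths (filling any trivial-in-$\freeproduct_{i\in I}G_i$ subloop with $G_i$-material), so that it becomes a relator face. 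The interior $G_i$-relator faces are then removed: since each $s_i$-edge of $\overline\Gamma$ carries an \emph{embedded} copy of $\Cay(G_i,S_i)$ (the first clause of the $Gr_*(6)$-condition), any interior configuration of $G_i$-relator faces can be absorbed by re-routing the incident arcs through those Cayley graphs, exactly as in \cite[Theorem~1.11]{Ste}; $G_i$-faces survive only along $\partial D$, where they record the deviation of $w$ from a locally geodesic word. Among all diagrams over $\overline\Gamma$ for $w$ produced in this way, I would then pick one, $D$, of minimal complexity, with complexity ordered lexicographically by (number of relator faces, number of $G_i$-faces, number of edges).

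It remains to see that a minimal such $D$ automatically has no interior edge essentially originating from $\Gamma$ and only locally geodesic interior arcs. An interior edge of a diagram over $\overline\Gamma$ lies between two relator faces. If such an edge essentially originated from $\Gamma$, then --- exactly as in \cite[Lemma~2.13]{Gru} --- the two incident relator faces could be merged along a maximal arc of edges essentially originating from $\Gamma$, and the resulting face re-expressed through simple closed paths of $\overline\Gamma$, producing a diagram of strictly smaller complexity, contradicting minimality. If an interior arc failed to be locally geodesic, a maximal subpath of it lying in an attached $\Cay(G_i,S_i)$ would not be a geodesic there; re-routing that subpath along a geodesic between the same endpoints, and re-decomposing the two incident relator faces over $\overline\Gamma$, strictly decreases the number of edges without increasing the higher-priority coordinates, again contradicting minimality; this is the analogue of the reduction in \cite[Theorem~1.11]{Ste}. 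Since the complexity order is well-founded, a diagram with all three properties exists. The main obstacle is the verification in this last step: one must check that each reduction move strictly decreases the chosen lexicographic complexity and yields again a diagram over $\overline\Gamma$ --- in particular that re-decomposing a merged boundary label into \emph{simple} closed paths of $\overline\Gamma$, and handling the $G_i$-material created by shortening arcs, never inflates the number of relator faces --- and this is exactly where the embeddedness of the $\Cay(G_i,S_i)$ and the reducedness of $\overline\Gamma$ built into the $Gr_*(6)$-condition enter.
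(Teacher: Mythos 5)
Your proposal takes essentially the same route as the paper, whose entire proof is the remark that the lemma ``follows from the proofs of \cite[Lemma 2.13]{Gru} and \cite[Theorem 1.11]{Ste}'': convert a van Kampen diagram over the free-product presentation into a diagram over $\overline\Gamma$ and use a minimality argument to kill interior edges essentially originating from $\Gamma$ and non-locally-geodesic interior arcs, exactly as in those two proofs. The only caveat is the one you flag yourself: in \cite{Gru} the reduction is driven by the number of edges rather than by the face count, precisely so that re-decomposing a merged boundary path into simple closed paths of $\overline\Gamma$ cannot inflate the complexity, so your lexicographic order should be adjusted accordingly (or replaced by the edge count) when the details are written out.
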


\begin{remark}[Graphical $Gr$ vs. $Gr_*$ conditions]\label{remark:differences}
Suppose $\Gamma$ is a graph labelled over the generating set $S$ of the free group $F(S)$ considered as the free product $*_{s\in S}F(s)$, and suppose $\Gamma$ does not have a vertex of degree 1. (Not having a vertex of degree 1 ensures that $\Gamma$ does not have unnecessary edges that restrict its automorphism group.) See Figure~\ref{figure:differences} for an illustration.

If $p$ and $q$ are essentially distinct reduced paths with the same label on $\Gamma$ and the terminal vertices of $p$ and $q$ are both incident at edges with label $s$, then $ps$ and $qs$, and also $ps^{-1}$ and $qs^{-1}$ are essential pieces in $\overline\Gamma$, and the same holds for the initial vertices. Conversely, essentially distinct paths $p'$ and $q'$ in $\overline \Gamma$ with label $s_1^lws_2^k$, $k,l\in \Z\setminus\{0\}$, where the initial letter of $w$ is not $s_1^{\pm1}$ and the terminal letter is not $s_2^{\pm1}$, contain subpaths $p$ and $q$ with label $w$ that also exist in $\Gamma$ and are essentially distinct in $\Gamma$. Thus, a sufficient for a condition for a $Gr(n)$-labelled graph to satisfy the $Gr_*(n)$-condition is the following: No nontrivial closed path can be written as $p_1q_1p_2q_2\dots p_{n-1}q_{n-1}$, where each $p_i$ is an essential piece 
in $\Gamma$ and each $q_i$ is labelled by a product of at most two powers of generators.

If $\Gamma$ satisfies the $Gr_*(n)$-condition over a free product of free groups (with respect to free generating sets), then $\overline \Gamma$ satisfies the $Gr(n)$-condition since there are no nontrival closed paths in any attached $\Cay(G_i,S_i)$.
\end{remark}

\begin{figure}\label{figure:differences}
\begin{tikzpicture}[>=stealth,
shorten <=2.5pt, shorten >=2.5pt,
line width=1pt,x=.5cm,y=.5cm, scale=.8]
\coordinate (A) at (-12,0);
\coordinate (B) at (-10,0);
\coordinate (C) at (-8.59,1.41);
\coordinate (D) at (-8.59,3.41);
\coordinate (E) at (-10.59,3.41);
\coordinate (F) at (-12,2);

\fill (A) circle (2pt);
\fill (B) circle (2pt);
\fill (C) circle (2pt);
\fill (D) circle (2pt);
\fill (E) circle (2pt);
\fill (F) circle (2pt);

\draw[->-] (A) to (B);
\draw[->-] (B) to (C);
\draw[->>-] (C) to (D);
\draw[-<-] (D) to (E);
\draw[-<<-] (E) to (F);
\draw[-<<-] (F) to (A);

\fill (0,0) circle (2pt);
\fill (2,0) circle (2pt);
\fill (3.41,1.41) circle (2pt);
\fill (3.41,3.41) circle (2pt);
\fill (1.41,3.41) circle (2pt);
\fill (0,2) circle (2pt);

\draw[->-] (0,0) to (2,0);
\draw[->-] (2,0) to (3.41,1.41);
\draw[->>-] (3.41,1.41) to (3.41,3.41);
\draw[-<-] (3.41,3.41) to (1.41,3.41);
\draw[-<<-] (1.41,3.41) to (0,2);
\draw[-<<-] (0,2) to (0,0);

\fill (-2,0) circle (2pt); \draw[->-] (-2,0) to (0,0); \draw[->-] (-4,0) to (-2,0); \node at (-4.5,0) {$\dots$};
\fill (0,-2) circle (2pt); \draw[->>-] (0,-2) to (0,0); \draw[->>-] (0,-4) to (0,-2); \node at (0,-4.3) {$\vdots$};

\fill (5.41,1.41) circle (2pt); \draw[->-] (3.41,1.41) to (5.41,1.41);\draw[->-] (5.41,1.41) to (7.41,1.41); \node at (7.91,1.41) {$\dots$};
\fill (5.41,3.41) circle (2pt); \draw[->-] (3.41,3.41) to (5.41,3.41);\draw[->-] (5.41,3.41) to (7.41,3.41); \node at (7.91,3.41) {$\dots$};
\fill (-.59,3.41) circle (2pt); \draw[->-] (-.59,3.41) to (1.41,3.41);\draw[->-] (-2.59,3.41) to (-.59,3.41); \node at (-3.09,3.41) {$\dots$};

\fill (1.41,5.41) circle (2pt); \draw[->>-] (1.41,3.41) to (1.41,5.41); \draw[->>-] (1.41,5.41) to (1.41,7.41); \node at (1.41,7.91) {$\vdots$}; 
\fill (3.41,5.41) circle (2pt); \draw[->>-] (3.41,3.41) to (3.41,5.41); \draw[->>-] (3.41,5.41) to (3.41,7.41); \node at (3.41,7.91) {$\vdots$}; 
\fill (3.41,-.59) circle (2pt); \draw[->>-] (3.41,-.59) to (3.41,1.41); \draw[->>-] (3.41,-2.59) to (3.41,-.59); \node at (3.41,-2.89) {$\vdots$}; 
\end{tikzpicture}
\caption[]{An example of $\Gamma$ (left) and $\overline \Gamma$ (right) over the free product $G_1*G_2$, where $G_1$ is the infinite cyclic group generated by $S_1=\{a\}$ and $G_2$ is the infinite cyclic group generated by $S_2=\{b\}$. In the picture, $a$ is represented by \tikz[>=stealth,shorten <=2.5pt, shorten >=2.5pt,line width=1pt]{\fill (.5,-.1) circle (0pt); \draw[->-] (0,0) to (1,0);} and $b$ is represented by \tikz[>=stealth,shorten <=2.5pt, shorten >=2.5pt,line width=1pt]{\fill (.5,-.1) circle (0pt); \draw[->>-] (0,0) to (1,0);}. Note that $\Gamma$ satisfies the graphical $Gr(6)$-condition, since every essential piece has length at most 1. On the other hand, $\Gamma$ does not satisfy the graphical $Gr_*(6)$-condition: For example, in $\overline\Gamma$ there exist paths labelled $a^2b$ and $a^{-1}b^{-2}$ that are essential pieces and whose concatenation is a closed path in $\overline\Gamma$ with nontrivial label in $G_1*G_2$. 
}
\end{figure}

\subsection{Proof of Theorem~\ref{thm:gr6}}

An \emph{interior vertex} in an attached Cayley graph is a vertex that is not contained in any other attached Cayley graph. We say a component of $\overline\Gamma$ is \emph{finite} if it has only finitely many vertices that are not interior vertices of attached Cayley graphs, and we say it has \emph{non-trivial fundamental group} if the set of labels of closed paths is nontrivial in the free product.

\begin{lem}\label{lem:graphial_support} Let $\overline\Gamma$ be a $Gr_*(6)$-labelled graph over a free product of infinite groups. Let $x$ be a vertex in a finite component $\overline\Gamma_0$ of $\overline \Gamma$ with nontrivial fundamental group. Then there exists a vertex $y$ in $\overline\Gamma_0$ such that:
\begin{itemize}
 \item No path from $x$ to $y$ is concatenation of at most two essential pieces, and
 \item $y$ lies in the interior of an attached Cayley graph.
\end{itemize}
\end{lem}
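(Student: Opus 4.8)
The statement is the free-product analogue of Lemma~\ref{lem:piece_distance}, so the plan is to mimic that argument while tracking the extra structure coming from the attached Cayley graphs. The key point is that $\overline\Gamma_0$ being a finite component with nontrivial fundamental group means it carries a nontrivial simple closed path $\gamma$ whose label is nontrivial in the free product. I would fix such a $\gamma$, and observe that $\gamma$ has positive length and passes through at least one vertex of some attached Cayley graph $\Cay(G_i,S_i)$ that is \emph{not} among the non-interior vertices of $\overline\Gamma_0$: since each $G_i$ is infinite, any attached copy $\Cay(G_i,S_i)$ has infinitely many interior vertices, so as soon as $\gamma$ enters such a Cayley graph, there is room to reroute it through an interior vertex.

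\textbf{Main argument.} First I would establish the ``reachability'' half: for the given $x$, I want a vertex $y\in\overline\Gamma_0$ with $d_p(x,y)\geqslant 3$ in the essential-piece sense, i.e. no path from $x$ to $y$ is a concatenation of at most two essential pieces. Suppose for contradiction that every vertex $y$ of $\overline\Gamma_0$ is reachable from $x$ by at most two essential pieces. Fix the simple closed path $\gamma$ (nontrivial label) and base it at a vertex $v$ on it. Walking around $\gamma$ from $v$, pick the ``midpoint'' vertex $z$ (so that $\gamma$ decomposes as two arcs $v\to z$ and $z\to v$, each a simple path). By hypothesis both $v$ and $z$ are reachable from $x$ by at most two essential pieces each. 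Concatenating a $v$-to-$x$ path and an $x$-to-$z$ path along one side, and doing the same on the other side, exhibits $\gamma$ (or a nontrivial closed path with the same label class) as a concatenation of at most $2+2+2+2=8$ essential pieces — this is already too crude. The correct bookkeeping, exactly as in the classical case, is: the two sides of $\gamma$ together form $\gamma$; if each side $v\to z$ equals (an $x\to v$ path reversed) followed by (an $x\to z$ path), then $\gamma$ is a concatenation of at most $4$ essential pieces, contradicting $Gr_*(6)$ provided the label of $\gamma$ is nontrivial in the free product (which is exactly where ``nontrivial fundamental group'' is used). So $d_p(x,\cdot)\geqslant 3$ somewhere on $\overline\Gamma_0$.

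\textbf{Placing $y$ on an interior vertex.} The remaining issue is to arrange that the far vertex $y$ can be taken \emph{inside} an attached Cayley graph. Here I would use the same local rerouting trick that appears in the proof of Lemma~\ref{lem:find}: if a vertex $y$ with $d_p(x,y)\geqslant 3$ lies on the boundary of (or outside) every attached Cayley graph, I replace it by a nearby interior vertex $y'$ of an attached $\Cay(G_i,S_i)$ that $\gamma$ (or the path realizing the piece-distance) passes through — since $G_i$ is infinite, such a $y'$ exists, and a geodesic inside $\Cay(G_i,S_i)$ from $y$ to $y'$ contributes at most one additional essential piece, so one checks that either $d_p(x,y')\geqslant 3$ still holds, or else, combining the short $y\to y'$ detour with the two-piece path from $x$ to $y'$, one again writes a nontrivial closed path as fewer than $6$ essential pieces, contradicting $Gr_*(6)$. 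Concretely: if $d_p(x,y')\leqslant 2$ and $y$ is reached from $y'$ inside $\Cay(G_i,S_i)$ by a geodesic that is a single essential piece (a subpath of an embedded Cayley graph), then $x\to y$ needs at most $3$ pieces; but by assumption $d_p(x,y)\geqslant3$ exactly, so equality is forced, and then going $x\to y'$ in two pieces, $y'\to z$ along $\gamma$ in at most two pieces, $z\to x$ in at most two pieces recreates $\gamma$ as at most six pieces — a contradiction unless the detour genuinely preserves $d_p(x,y')\geqslant3$.

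\textbf{Anticipated obstacle.} The delicate part is \emph{not} the counting of pieces but making sure the ``short detour inside $\Cay(G_i,S_i)$'' is genuinely a single essential piece and that the vertex one lands on is a genuine interior vertex of that copy, not a vertex shared with another attached Cayley graph or with the finite skeleton of $\overline\Gamma_0$. This is exactly where the $Gr_*(6)$-hypothesis ``every attached $\Cay(G_i,S_i)$ in $\overline\Gamma$ is an embedded copy'' does the work: it guarantees that geodesic segments inside an attached Cayley graph are honest paths in $\overline\Gamma$ and that moving finitely many steps into the Cayley graph, away from the (finitely many) non-interior vertices, lands on an interior vertex — and, since $G_i$ is infinite, one can always move far enough. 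So the proof structure is: (1) pick $\gamma$ from nontriviality of $\pi_1(\overline\Gamma_0)$; (2) run the classical midpoint argument to get a vertex at piece-distance $\geqslant3$ from $x$; (3) reroute that vertex into the interior of an attached Cayley graph using infiniteness of the free factor, invoking $Gr_*(6)$ to control the extra pieces; and the main obstacle is step (3), the careful accounting that the reroute neither destroys the piece-distance lower bound nor lands outside the interior.
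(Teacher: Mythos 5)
Your proposal has genuine gaps at both of its main steps. For the ``reachability'' half, the hypothesis you assume for contradiction only says that every vertex of $\overline\Gamma_0$ can be joined to $x$ by \emph{some} path that is a concatenation of at most two essential pieces; these connecting paths have nothing to do with the fixed nontrivial closed path $\gamma$, so concatenating them does not ``exhibit $\gamma$'' (nor anything in its conjugacy class) as a product of few pieces. The sentence ``if each side $v\to z$ equals (an $x\to v$ path reversed) followed by (an $x\to z$ path)'' is exactly the unjustified point: there is no reason an arc of $\gamma$ decomposes this way, and without it your count of $4$ (or $8$) pieces applies to a closed path whose label may well be trivial, so no contradiction with $Gr_*(6)$ results. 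This is also why the classical midpoint argument of Lemma~\ref{lem:piece_distance} does not transfer: in a cycle graph every path between two vertices is one of the two arcs, whereas in $\overline\Gamma_0$ the two-piece paths can wander arbitrarily. The ``placing $y$'' half is worse: your final disjunction (``a contradiction unless the detour genuinely preserves $d_p(x,y')\geqslant3$'') is circular, the closed path you build has uncontrolled and possibly trivial label, the claim that $y'\to z$ along $\gamma$ costs at most two pieces is unsupported, and in any case ``at most six pieces'' does not contradict a condition forbidding \emph{fewer than} six. You also never establish the fact your counts silently use, namely that edges (and paths inside attached Cayley graphs) are essential pieces; this does not come from embeddedness of the attached Cayley graphs, but from the finiteness of the component together with the infiniteness of the free factors, which prevents the label-preserving automorphism group of $\overline\Gamma_0$ from acting transitively on any attached $\Cay(G_i,S_i)$.

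The paper's proof avoids both problems by a structural argument rather than a vertex-by-vertex reroute: first, every edge of $\overline\Gamma_0$ is an essential piece (by the non-transitivity observation above); then one takes $\overline\Gamma_0'$ to be the union of \emph{all} paths from $x$ that are concatenations of at most two essential pieces and shows, using $Gr_*(6)$, that every closed path in $\overline\Gamma_0'$ has trivial label (each edge of such a closed path, closed up through $x$ by two-piece paths to its endpoints, yields a closed path made of at most $2+1+2=5$ essential pieces). Since $\overline\Gamma_0$ has nontrivial fundamental group, $\overline\Gamma_0'$ is proper, and the edges outside $\overline\Gamma_0'$ form a nonempty union of attached, hence infinite, Cayley graphs; any interior vertex of one of these serves as $y$, with both conclusions holding simultaneously and no rerouting needed. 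If you want to salvage your outline, the missing ideas are precisely these two: the essential-piece status of edges coming from finiteness versus infiniteness, and replacing the attempted ``write $\gamma$ as few pieces'' step by the argument that the two-piece-reachable subgraph supports only trivial labels.
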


\begin{proof}
 Since $\overline\Gamma_0$ is finite and every attached $\Cay(G_i,S_i)$ is infinite, the group of label-preserving automorphisms of $\overline\Gamma_0$ cannot operate transitively on any attached $\Cay(G_i,S_i)$. Therefore, every edge of $\overline\Gamma_0$ is an essential piece.
 
 Let $x$ be a vertex, and let $\overline\Gamma_0'$ the subgraph of $\overline\Gamma_0$ that is the union of all paths starting at $x$ that are concatenations of at most two essential pieces. Then $\overline\Gamma_0'$ has trivial fundamental group by the $Gr_*(6)$-assumption and, therefore, is a proper subgraph. By construction, $\overline\Gamma_0'$ is a union of attached Cayley graphs, and the subgraph $\overline\Gamma_0''$ of $\overline\Gamma_0$ whose edges are the edges not contained in $\overline\Gamma_0'$ is a union of attached Cayley graphs. Choosing $y$ in the interior of an attached Cayley graph in $\overline\Gamma_0''$ yields the claim.
\end{proof}

\begin{proof}[Proof of Theorem~\ref{thm:gr6}.] There exist at 16 least pairwise non-isomorphic finite components of $\overline\Gamma$ with non-trivial fundamental groups. By Lemma~\ref{lem:graphial_support}, we may choose vertices $x_i,y_i$, $i\in\{1,2,\dots,16\}$ in each component with $d_p(x_i,y_i)\geqslant3$ such that $y_i$ and $x_{i+1}$ never lie in attached Cayley graphs corresponding to the same $G_i$. Thus $y_i$ and $x_{i+1}$ have disjoint support. 

We make definitions as those leading up to Proposition~\ref{prop:main_classical}. When defining $W_i$, we take the $p_k$ to be \emph{all} paths in $\overline \Gamma$ from $x_k$ to $y_k$. We take $R$ to be the set of \emph{all} words read on closed paths in $\overline \Gamma$. We carry out the proof of Proposition~\ref{prop:main_classical} with only the following additional observations: 

When considering $D$, we can assume that each $R$-face has a boundary word that is nontrivial in $*_{i\in I}G_i$. If this is not the case for a face $\Pi$ then, by Lemma~\ref{lem:reduction}, we can replace $\Pi$ by a diagram made up of faces $\Pi_1,\dots \Pi_l$ such that each $\partial\Pi_i$ lifts to a closed path contained in one of the attached $\Cay(G_i,S_i)$. Observe that if such a face  $\Pi_i$ intersects another face $\tilde\Pi$ in an edge, by our definitions, we can merge $\Pi_i$ into $\tilde\Pi$. Thus, we can merge all faces $\Pi_i$ into other faces, and, hence, the existence of $\Pi$ contradicts minimality. Therefore every $R$-face lifts to a nontrivial closed path and, hence, has a boundary path made up of no fewer than $6$ essential pieces.

When considering an arc $a$ in the intersection of two $R$-faces, we can assume that it does not essentially originate from $\Gamma$, for else, we could remove the arc $a$ to obtain a single $R$-face, contradicting minimality. Therefore $a$ is an essential piece. The rest of claims 1 and 2 follows with the same proofs, replacing the word ``piece'' by ``essential piece''.
\end{proof}

\section{Cyclic subgroups of graphical $Gr'(\frac{1}{6})$-groups are undistorted}

In this section, we show that every cyclic subgroup of a graphical $Gr'(\frac{1}{6})$-group is undistorted. We use this result to construct for every $p$ uncountably many classical $C(p)$-groups that do not admit any graphical $Gr'(\frac{1}{6})$-presentation.

\begin{defi}
 Let $G$ be a group generated by a finite set $S$, and let $H$ be a subgroup. We say $H$ is \emph{undistorted} in $G$ if $H$ is finitely generated and the inclusion $H\to G$ is a quasi-isometric embedding with respect to the corresponding word-metrics. We say $H$ is \emph{quasi-convex} in $G$ with respect to $S$ if there exists $C>0$ such that every geodesic in $\Cay(G,S)$ connecting two elements of $H$ is contained in the $C$-neighborhood of $H$.  
\end{defi}

\begin{thm}\label{thm:undistorted}
 Suppose the set of labels $S$ is finite. Let $\Gamma$ be a $C'(\frac{1}{6})$-labelled graph, or let $\Gamma$ be a $Gr'(\frac{1}{6})$-labelled graph whose components are finite. Then every cyclic subgroup of $G(\Gamma)$ is undistorted and conjugate to a cyclic subgroup that is quasi-convex with respect to $S$.
\end{thm}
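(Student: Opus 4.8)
The plan is to reduce the statement to the finitely presented, hyperbolic case. First I would dispose of finite‑order elements (a finite cyclic subgroup is automatically undistorted and quasi‑convex) and, for $g$ of infinite order, fix a word $w$ of minimal length $\ell:=|w|$ among all words representing a conjugate of $g$, calling $g_0$ the element of $G(\Gamma)$ it represents. Since every cyclic permutation of $w$ again represents a conjugate of $g$ of length $\ell$, the word $w$ is cyclically reduced; hence $w^n$ is a reduced word for every $n$, every subword of $w^n$ of length at most $\ell$ is geodesic (being a prefix of a cyclic conjugate of $w$), every conjugate of $g$ has word length at least $\ell$, and $w^k\ne1$ in $G(\Gamma)$ for $k\ge1$.

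The crux will be the following: there is a constant $N=N(\ell)$ such that for every $n$ and every geodesic word $u$ for $g_0^n$, the word $w^nu^{-1}$ is already trivial in the group $\bar G$ presented by the (finitely many, since $S$ is finite) relators of $\Gamma$ of length less than $N$ — equivalently, $w^n$ can be carried to $u$ using only free reductions and relators of bounded length. Granting this, $\bar G$ is a finitely presented graphical $C'(\frac{1}{6})$‑group, hence hyperbolic \cite{Oll}, and discarding the remaining relators gives a surjection $\bar G\twoheadrightarrow G(\Gamma)$ carrying the image $\bar g_0$ of $w$ to $g_0$, so $\bar g_0$ has infinite order in $\bar G$. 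Then $|\bar g_0^n|_{\bar G}\le|u|=|g_0^n|_{G(\Gamma)}\le|\bar g_0^n|_{\bar G}$ (the last inequality because $G(\Gamma)$ is a quotient of $\bar G$), so these are equal; as $\bar g_0$ has positive stable translation length $\tau$ in the hyperbolic group $\bar G$, we get $|g_0^n|_{G(\Gamma)}\ge\tau|n|$, and since $|g_0^n|_{G(\Gamma)}\le\ell|n|$ is clear, $\langle g_0\rangle$, hence the conjugate subgroup $\langle g\rangle$, is undistorted. For quasi‑convexity of $\langle g_0\rangle$: a geodesic word between two powers of $g_0$ in $\Cay(G(\Gamma),S)$ is, by the displayed equalities, also geodesic in $\Cay(\bar G,S)$; infinite cyclic subgroups of hyperbolic groups are quasi‑convex, so the corresponding geodesic in $\Cay(\bar G,S)$ stays in a bounded neighbourhood of $\langle\bar g_0\rangle$, and pushing forward along the $1$‑Lipschitz map $\Cay(\bar G,S)\to\Cay(G(\Gamma),S)$ shows that it stays in a bounded neighbourhood of $\langle g_0\rangle$.

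To prove the crux I would take a reduced diagram $D$ over $\Gamma$ for $w^nu^{-1}$ (Lemma~\ref{lem:graphical_basic}), with boundary $p\cdot q^{-1}$, $p$ reading $w^n$ and $q$ reading $u$. A Greendlinger‑type argument via the curvature formula (Lemma~\ref{lem:curvature}), along the lines of the proof of Proposition~\ref{prop:main_classical}, produces a face $\Pi$ with an arc $a\subseteq\partial D\cap\partial\Pi$ of length more than $\frac{1}{2}|\partial\Pi|$. Writing $\partial\Pi=ab$ and using that the label of $\partial\Pi$ (a simple closed path of $\Gamma$) is trivial in $G(\Gamma)$ gives $|a|_{G(\Gamma)}\le|b|<|a|$, so $a$ is not geodesic; hence $a$ cannot be a subword of $q$, and — modulo the routine case where $a$ straddles a corner of $\partial D$ — it is a subword of $w^n$. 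If $|a|\ge2\ell$, then $\partial\Pi$ contains a power $w'^k$ of a cyclic conjugate $w'$ of $w$ with $k\ge|a|/\ell-2\ge2$, and shifting by one period exhibits two occurrences of $w'^{k-1}$ inside $\partial\Pi$: if they are essentially distinct then $w'^{k-1}$ is an essential piece that is a subpath of the simple closed path $\partial\Pi$, so the metric condition gives $(k-1)\ell<\frac{1}{6}|\partial\Pi|$, which with $|\partial\Pi|<2|a|<2(k+2)\ell$ forces $k\le2$ and hence $|\partial\Pi|<8\ell$; if they are essentially equal then a label‑preserving automorphism — which has finite order on the finite component carrying $\partial\Pi$ in the $Gr'(\frac{1}{6})$‑case, and does not arise in the $C'(\frac{1}{6})$‑case where ``piece'' already means ``distinct label‑preserving homomorphism'' — would force the label of $\partial\Pi$ to be a proper power, contradicting $w^k\ne1$. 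Thus $\Pi$ is ``short'', $|\partial\Pi|<8\ell$ (trivially so when $|a|<2\ell$). Removing $\Pi$, replacing $a$ by the complementary arc of $\partial\Pi$, and re‑reducing yields a diagram with strictly fewer faces in which $w^n$ has been changed only by one short relator; iterating carries $w^n$ to $u$ using only short relators.

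The hard part will be the last step: after several shell removals the outer arc of the shell is a subword not of $w^n$ but of $w^n$ with several short relator‑arcs substituted in, and one must still bound its boundary length by a function of $\ell$ alone. This is exactly where the metric small cancellation hypothesis is indispensable — a relator cannot contain a long power of a fixed cyclically geodesic word, and a subpath shared by a long relator and a short one is a piece, hence short — and it is where the assumption that the components are finite enters in the $Gr'(\frac{1}{6})$‑case, ruling out a relator being a proper power. The remaining points — extracting the shell from the curvature formula, the arcs straddling the two corners of $\partial D$, and checking that a finite collection of $\Gamma$'s relators (realised as a finite subgraph) still satisfies $C'(\frac{1}{6})$ — are routine.
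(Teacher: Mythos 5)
Your reduction stands or falls with the ``crux'' claim that some $N=N(\ell)$ (or even some finite $N$ uniform in $n$) makes every relation $w^nu^{-1}=1$ a consequence of relators of length less than $N$, and this is exactly the point you have not proved. The shelling argument you sketch only works for the first shell: once a face is removed, the outer boundary is no longer a subword of $\overline w$, so the mechanism ``a long boundary arc contains a long power of a cyclic conjugate of $w$, hence a long essential piece, hence the face is short'' is no longer available, and you concede this yourself. The corner faces are not routine either: a face meeting both the $w^n$-side and the $u$-side can have, say, almost half its boundary on the geodesic side, an arc of length $<\ell+\frac{|\partial\Pi|}{6}$ on the $w^n$-side and two interior pieces, which is consistent with $|\partial\Pi|$ being arbitrarily large; so long relators can genuinely occur in the bigon diagrams, and nothing in your argument rules out that they are needed. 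Note that the paper never proves (and does not use) anything like your uniform bound: its quantitative control is in terms of a constant $C_0$ depending on $\Gamma$ and $w$ (the maximal length of a path in $\Gamma$ labelled by a subword of $\overline w$), and there is a whole regime --- the paper's Case 1, where no such $C_0$ exists because arbitrarily long segments of $\overline w$ occur in $\Gamma$ (e.g.\ relators $r_m$ containing $w^m$, which is perfectly compatible with $C'(\frac16)$ when $|r_m|$ grows) --- which your scheme does not address at all and where a bound depending only on $\ell$ is implausible; the paper handles it by a completely different argument (such a long segment is a concatenation of at most two essential pieces, hence a convex geodesic in $\Gamma$, whose image in $\Cay(G(\Gamma),S)$ is a convex geodesic by \cite[Lemma 4.21]{GS}).

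By contrast, the paper does not pass to a finitely presented hyperbolic quotient at all: it fixes the diagrams $B_n$ with boundary $w^ng_n^{-1}$ and analyses them directly, using Strebel's classification of $(3,7)$-diagrams (shape $I_1$) together with the path $\sigma$ and the analysis of special and very special faces to show that every face has boundary length less than $6C_0$ and meets the geodesic side, from which both the linear lower bound on $|g_n|$ and the bounded Hausdorff distance (hence quasi-convexity) follow. Smaller points in your write-up also need care --- the arithmetic $k\geq|a|/\ell-2\geq2$ requires $|a|\geq4\ell$, not $2\ell$, and the contradiction in the ``essentially equal'' case should be that $w$ would represent an element conjugate into a finite-order element (a period in a finite component), not merely that the relator is a proper power --- but the decisive issue is the unproved crux: as it stands the proposal establishes the conclusion only for the first shell and does not yield the theorem.
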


\begin{thm}\label{thm:undistorted_product}
 Suppose $I$ is finite. Let $\Gamma$ be a $C_*'(\frac{1}{6})$-labelled graph over a free product $*_{i\in I}G_i$ with respect to finite generating sets $S_i$, or let $\Gamma$ be a $Gr_*'(\frac{1}{6})$-labelled graph with finite components over a free product $*_{i\in I}G_i$ with respect to finite generating sets $S_i$. Then: 
 \begin{itemize}
  \item Every cyclic subgroup of $G(\Gamma)$ is undistorted if and only if for every $i$, every cyclic subgroup of $G_i$ is undistorted.
  \item Every cyclic subgroup of $G(\Gamma)$ is conjugate to a cyclic subgroup with that is quasi-convex with respect to $\sqcup_{i\in I} S_i$ if and only if for every $i$, every cyclic subgroup of $G_i$ is conjugate to a quasi-convex cyclic subgroup with respect to $S_i$.
 \end{itemize}
 \end{thm}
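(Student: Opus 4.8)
The plan is to separate the cyclic subgroups of $G(\Gamma)$ into those conjugate into a free factor $G_i$ and those that are not, and to build on two standard consequences of free-product small cancellation that come out of the diagram machinery of Sections~\ref{section:classical} and~\ref{section:graphical}: that each $G_i$ embeds into $G(\Gamma)$, and that, with respect to $\sqcup_{i\in I}S_i$, the subgroup $G_i$ is quasi-convex (hence undistorted) in $G(\Gamma)$. The first is Greendlinger's Lemma over free products; for the second, given a geodesic word $u$ over $\sqcup_iS_i$ representing an element of $G_i$ and an $S_i$-geodesic word $v$ in $G_i$ for the same element, take a minimal diagram over $\overline\Gamma$ for $uv^{-1}$ as in Lemma~\ref{lem:graphical_product_basic}, all of whose faces carry at least $6$ essential pieces on their boundary, and run a Greendlinger/curvature count against the $C_*'(\tfrac16)$-condition to force $u$ into a bounded neighbourhood of the coset $G_i$; finiteness of $I$, of the $S_i$, and of the components of $\Gamma$ keeps all the constants uniform. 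Throughout, the reductions to $g$ of infinite order and to $g$ not a proper power (replacing $g=g_0^k$ by $g_0$, which changes $\langle g\rangle$ only by a finite-index subgroup, and noting $g_0$ is conjugate into a factor iff $g$ is) are routine.

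For the forward implications, assume every cyclic subgroup of $G(\Gamma)$ is undistorted, respectively conjugate to a quasi-convex cyclic subgroup. Fix $i$ and an infinite-order $a\in G_i$. Since $S_i\subseteq\sqcup_jS_j$, an $S_i$-word for $a^n$ is also an $S$-word for $a^n$ in $G(\Gamma)$, so $|a^n|_{S_i}\geq|a^n|_{\sqcup_jS_j}$, and the right-hand side grows linearly in $n$ because $\langle a\rangle$ is undistorted in $G(\Gamma)$; hence $\langle a\rangle$ is undistorted in $G_i$. For quasi-convexity: a hypothesised quasi-convex cyclic subgroup of $G(\Gamma)$ that is $G(\Gamma)$-conjugate to $\langle a\rangle$ is, by the conjugacy theorem for small cancellation over free products, in fact $G_i$-conjugate to $\langle a\rangle$, and pulling it back through the quasi-isometric embedding $G_i\hookrightarrow G(\Gamma)$ (using quasi-convexity of $G_i$ in $G(\Gamma)$) exhibits a quasi-convex conjugate of $\langle a\rangle$ inside $G_i$; I would phrase this with the elementary distance estimates for quasi-convex subgroups rather than any hyperbolicity, since $G(\Gamma)$ need not be hyperbolic.

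For the backward implications, assume every cyclic subgroup of each $G_i$ is undistorted, respectively conjugate to a quasi-convex cyclic subgroup with respect to $S_i$, and let $g\in G(\Gamma)$ have infinite order. If $g=ha_ih^{-1}$ with $a_i\in G_i$, then $n\mapsto g^n$ is a translate of $n\mapsto a_i^n$ by $h$ in $\Cay(G(\Gamma),\sqcup_jS_j)$, so it suffices to treat $\langle a_i\rangle$, and $\langle a_i\rangle\hookrightarrow G_i\hookrightarrow G(\Gamma)$ is a composition of quasi-isometric embeddings — the first by hypothesis, the second by quasi-convexity of $G_i$ — hence is one itself; and a quasi-convex conjugate of $\langle a_i\rangle$ in $G_i$ stays quasi-convex in $G(\Gamma)$, again using quasi-convexity of $G_i$. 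If $g$ is conjugate into no $G_i$, conjugate it to a representative $w$ that is cyclically reduced in the free-product sense (so in a diagram over $\overline\Gamma$ it lifts to a locally geodesic path and $(w^{-1})^n$ to a periodic one), let $u$ be a shortest word over $\sqcup_iS_i$ with $u=_{G(\Gamma)}w^n$, and take a minimal diagram $D$ over $\overline\Gamma$ with boundary word $u\,(w^{-1})^n$ as in Lemma~\ref{lem:graphical_product_basic}, with all interior arcs locally geodesic and not essentially originating from $\Gamma$. As in the proof of Theorem~\ref{thm:undistorted}: a Greendlinger-type face $\Pi$ whose boundary arc $\bar a$ on $\partial D$ covers more than half of $\partial\Pi$ cannot have $\bar a$ on the $u$-side (else $u$ is not geodesic) and cannot be large with $\bar a$ on the $(w^{-1})^n$-side (a shift of $\bar a$ by one period of $w$ would be an essential piece long enough to violate $C_*'(\tfrac16)$, using that $w$ is $*$-cyclically reduced and not a proper power); so every such face is small and lies on the $(w^{-1})^n$-side, and the curvature/Greendlinger bookkeeping yields $|u|\geq cn-c'$, so $\langle g\rangle$ is undistorted with $\langle w\rangle$ the desired quasi-convex conjugate.

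The genuinely delicate part is expected to be the loxodromic case together with the quasi-convexity of $G_i$ in $G(\Gamma)$: both amount to re-running the curvature estimates of Sections~\ref{section:classical} and~\ref{section:graphical} over the \emph{infinite} completion $\overline\Gamma$, controlling arcs that travel inside the attached Cayley graphs, and verifying that the periodicity of $(w^{-1})^n$ is genuinely incompatible with the $C_*'(\tfrac16)$-condition once $w$ is $*$-cyclically reduced and not a proper power. This is exactly where finiteness of $I$, of the $S_i$, and of the components of $\Gamma$ is used (and, in the elliptic case, the hypothesis on the factors), and where "piece" and "geodesic" must be replaced systematically by "essential piece" and "locally geodesic" as in Lemma~\ref{lem:graphical_product_basic}.
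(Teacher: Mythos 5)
Your overall architecture does match the paper's, which simply reruns the proof of Theorem~\ref{thm:undistorted} over $\overline\Gamma$ and adds one observation: when $\overline w$ is contained in an attached $\Cay(G_i,S_i)$, undistortion and quasi-convexity of $\langle g\rangle$ transfer between $\Cay(G(\Gamma),S)$ and $\Cay(G_i,S_i)$ because each infinite attached Cayley graph is isometrically embedded and convex (\cite[Remark 4.23]{GS}); this is where the hypothesis on the factors enters and where the ``if and only if'' comes from. The genuine gap is in your loxodromic case. You claim that a face $\Pi$ whose boundary arc $\bar a$ on the $w^n$-side exceeds $\frac{|\partial\Pi|}{2}$ is impossible because a shift of $\bar a$ by one period of $w$ gives an essential piece violating $C_*'(\frac{1}{6})$. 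The period-shift argument only yields $|\bar a|<|w|+\frac{|\partial\Pi|}{6}$ (this is the corresponding inequality in Lemma~\ref{lem:inequalities}), which is perfectly compatible with $|\bar a|>\frac{|\partial\Pi|}{2}$ whenever $2|w|\leqslant|\partial\Pi|<3|w|$. Such faces do occur; they are exactly the ``special'' and ``very special'' faces of Case 2b, and neutralizing them is the bulk of the proof of Theorem~\ref{thm:undistorted}: Lemmas~\ref{lem:inequalities} and~\ref{lem:sigma}, Lemma~\ref{lem:consecutive}, Corollary~\ref{cor:interior_degree3}, Lemma~\ref{lem:connected}, the removal of very special faces, and Strebel's classification (Lemma~\ref{lem:strebel}) applied to the remaining diagram together with geodesicity of $g_n$. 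Your sketch in effect asserts that Case 2a always holds. Two further points feed into this: the argument needs $w$ to be shortest among all words representing \emph{any} element some power of which is conjugate to $h$ (this is how $2|w|\leqslant|\gamma|$ in Lemma~\ref{lem:inequalities} is obtained); your weaker normalization ``cyclically reduced and not a proper power'' does not give that bound. And you do not treat the analogue of Case 1, where there is no uniform bound $C_0$ on lengths of paths in $\overline\Gamma$ labelled by subwords of $\overline w$; this can happen even for loxodromic $w$ since $\Gamma$ may have infinitely many components, and the paper handles it by the two-essential-pieces/convex-geodesic argument.

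On the elliptic case and the forward implications: the distortion direction via $|a^n|_{S_i}\geqslant|a^n|_{\sqcup_j S_j}$ is fine, but your route through quasi-convexity of $G_i$ in $G(\Gamma)$ (which you propose to prove by a separate Greendlinger count) and through a conjugacy theorem over free products introduces two unproved ingredients that the paper does not need. The paper's reduction is sharper: the proof itself produces, for each infinite-order $h$, a specific conjugate $\langle g\rangle$ with $\overline w$ either loxodromic or contained in one attached $\Cay(G_i,S_i)$, and in the latter case the isometric embedding and convexity of that Cayley graph give the equivalence directly, with no conjugation step. This matters because $G(\Gamma)$ is in general not hyperbolic, so quasi-convexity is not conjugation-invariant, and transporting a quasi-convex conjugate of $\langle a\rangle$ back into $G_i$ as you propose requires an argument you have not supplied.
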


 
 We postpone the proofs of Theorems~\ref{thm:undistorted} and \ref{thm:undistorted_product} and first give two consequences. Since every classical $C'(\frac{1}{6})$-presentation corresponds to a $Gr'(\frac{1}{6})$-labelled graph $\Gamma$ where every component is a finite cycle graph, Theorem~\ref{thm:undistorted} implies:

\begin{cor}
 Let $G$ be a group admitting a classical $C'(\frac{1}{6})$-presentation with finite generating set. Then every cyclic subgroup is undistorted.
\end{cor}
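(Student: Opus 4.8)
The plan is to reduce the statement to Theorem~\ref{thm:undistorted} by translating the classical presentation into a graphical one, so the proof is essentially a dictionary check.

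\textbf{Step 1: Build the associated labelled graph.} Given a classical $C'(\frac{1}{6})$-presentation $\langle S\mid R\rangle$ with $S$ finite, I would pass to the symmetrized closure of $R$ and form the labelled graph $\Gamma:=\Gamma_R=\bigsqcup_{[r]}\gamma_r$, where $\gamma_r$ is the cycle graph of length $|r|$ carrying the cyclic word $r$ (one copy per $[\cdot]$-class), exactly as in Section~\ref{section:classical}. Every component is a finite cycle graph, hence finite, and the label set $S$ is finite. The simple closed paths in $\gamma_r$ are precisely (up to basepoint) the cyclic conjugates of $r^{\pm1}$, so $G(\Gamma)=\langle S\mid\text{labels of simple closed paths in }\Gamma\rangle$ is exactly $G$.

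\textbf{Step 2: Check the graphical condition.} The labelling of $\Gamma$ is reduced because the relators are cyclically reduced. By the remark preceding Definition~\ref{defi:piece}, a classical piece is the same thing as an essential piece of $\Gamma$: a path $p$ admitting two essentially distinct label-preserving maps to $\Gamma$ is exactly a word $u$ occurring in two ``genuinely different'' places among the symmetrized relators. Hence the classical inequality ``$|p|<\frac{|r|}{6}$ for every piece $p$ that is a subword of a relator $r$'' is literally the graphical $Gr'(\frac{1}{6})$-condition ``every essential piece that is a subpath of a nontrivial simple closed path $\gamma$ satisfies $|p|<\frac{|\gamma|}{6}$''. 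The one point requiring a word of care is that if some relator is a proper power $u^k$, then $\gamma_r$ has a nontrivial label-preserving automorphism group; this is precisely why the target condition is the $Gr'$-version rather than the $C'$-version, and with that choice the translation goes through unchanged.

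\textbf{Step 3: Apply Theorem~\ref{thm:undistorted}.} Now $\Gamma$ is a $Gr'(\frac{1}{6})$-labelled graph with finite components and finite label set, so Theorem~\ref{thm:undistorted} gives that every cyclic subgroup of $G(\Gamma)=G$ is undistorted (indeed conjugate to one that is quasi-convex with respect to $S$). I expect no genuine obstacle here: the content is entirely the correspondence between the classical and graphical frameworks, and the only subtlety is the accommodation of proper-power relators, which is exactly why the $Gr'(\frac{1}{6})$-case of Theorem~\ref{thm:undistorted}, rather than the $C'(\frac{1}{6})$-case, is invoked.
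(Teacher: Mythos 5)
Your proposal is correct and is essentially the paper's own argument: the paper derives the corollary in one line by observing that a classical $C'(\frac{1}{6})$-presentation corresponds to a $Gr'(\frac{1}{6})$-labelled graph whose components are finite cycle graphs (classical pieces matching essential pieces, with the $Gr'$-condition absorbing proper-power relators) and then invoking Theorem~\ref{thm:undistorted}. Your extra care in Steps 1--2 just spells out that same dictionary.
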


This corollary can also be deduced from the facts that every infinitely presented classical $C'(\frac{1}{6})$-group acts properly on a CAT(0) cube complex \cite{AO} and that every group that acts properly on a CAT(0) cube complex has no distorted cyclic subgroups \cite{Hag}.

We now construct classical $C(p)$-groups with distorted cyclic subgroups.

\begin{example}\label{example:distorted}
 Consider (the symmetrized closure of) the presentation $P=\langle a,b\mid r_n,n\in\N\rangle$, where
 $$r_n:=a b^{2np+1}a b^{2np+3}a\dots ab^{2np+2p-1}ab^{2^{n}}.$$
 This is a classical $C(p)$-presentation: Every piece is a subword of a word of the form $b^ka^{\pm 1}b^l$ with $k,l\in \Z$ and, hence, contains at most one copy of the letter $a$. Since every $r_n$ contains $p+1$ copies of the letter $a$, the classical $C(p)$-condition is satisfied.
 
Denote by $G$ the group defined by $P$. The cyclic subgroup of $G$ generated by $b$ is distorted: Since cycle graphs labelled by $r_n$ embed into $\Cay(G,\{a,b\})$ by \cite[Lemma 4.1]{Gru}, it is infinite. The group element represented by $b^{2^n}$ can be represented by a word of length $O(n)=o(2^n)$.
 
 If $p>8$, then $P$ satisfies the $Gr'_*(\frac{1}{6})$-condition with respect to $\langle a\rangle *\langle b \rangle$ with \emph{infinite} generating sets $\langle a\rangle $ and $\langle b\rangle$. (Here we consider $P$ as a disjoint union of labelled cycle graphs.) Thus, the restriction in Theorem~\ref{thm:undistorted_product} that the generating sets are finite is necessary. 

 If $p>8$, $P$ satisfies the \emph{classical} $C'_*(\frac{1}{6})$-condition over the free product $\langle a\rangle *\langle b\rangle$ as in \cite[Chapter V]{LS}. By the aforementioned results of Haglund \cite{Hag}, $G$ cannot act properly on a CAT(0) cube complex. A recent result of Martin and Steenbock shows that every finitely presented classical $C_*'(\frac{1}{6})$-group acts properly cocompactly on a CAT(0) cube complex if every generating free factor does. Our example shows that this does not extend in any way to infinite presentations. This contrasts the situation for classical $C'(\frac{1}{6})$-groups: By \cite{Wise}, every finitely presented classical $C'(\frac{1}{6})$-group acts properly cocompactly on CAT(0) cube complex, and by \cite{AO} every infinitely presented classical $C'(\frac{1}{6})$-group acts properly on a CAT(0) cube complex. 
\end{example}

 Combining a construction of Bowditch \cite{Bow} with Example~\ref{example:distorted}, we use Theorem~\ref{thm:undistorted} to show:

\begin{thm}\label{thm:uncountable}
 Given $p\in\N$, there exist uncountably many pairwise non-quasi-isometric finitely generated groups $(G_i)_{i\in I}$ such that:
 \begin{itemize}
  \item Every $G_i$ admits a classical $C(p)$-presentation with a finite generating set.
  \item No $G_i$ is isomorphic to any group defined by a $C'(\frac{1}{6})$-labelled graph with a finite set of labels.
  \item No $G_i$ is isomorphic to any group defined by $Gr'(\frac{1}{6})$-labelled graph whose components are finite with a finite set of labels.
 \end{itemize}
\end{thm}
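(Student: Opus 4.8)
The plan is to combine three ingredients: Bowditch's construction of uncountably many non-quasi-isometric two-generated groups distinguished by which words can be realized as lengths of embedded loops, the explicit distortion mechanism of Example~\ref{example:distorted}, and Theorem~\ref{thm:undistorted} as the obstruction that rules out $Gr'(\frac{1}{6})$-presentations with finite label set. Concretely, for a fixed $p$ I would fix a set $A\subseteq\N$ of ``gap exponents'' and build a presentation whose relators are modeled on the $r_n$ of Example~\ref{example:distorted}, but where the $b$-exponent blocks encode the membership function of $A$ in the style of Bowditch: roughly, $r_n$ is a product of $p+1$ letters $a$ separated by positive powers of $b$ whose exact values record $n\in A$ versus $n\notin A$ (for instance using exponents $2n p + (2i-1)$ between consecutive $a$'s to keep pieces short, and a final block $b^{f_A(n)}$ where $f_A$ grows exponentially iff $n\in A$). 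The same piece count as in Example~\ref{example:distorted} shows each of these is a classical $C(p)$-presentation with generating set $\{a,b\}$, giving the first bullet.

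\textbf{Next} I would verify the two distinguishing properties. For non-quasi-isometry of the uncountable family $(G_A)_{A}$, I would invoke Bowditch's theorem essentially verbatim: the subword structure of the relators that survives in $\Cay(G_A,\{a,b\})$ (using \cite[Lemma 4.1]{Gru} to know the relator cycles embed) lets one detect, quasi-isometrically, which ``gap values'' occur, so distinct $A$ up to a suitable equivalence yield non-quasi-isometric groups, and uncountably many inequivalent $A$ exist. For the remaining two bullets, the key observation is that each $G_A$ contains a \emph{distorted} cyclic subgroup: the argument of Example~\ref{example:distorted} shows $\langle b\rangle$ is infinite in $G_A$ (relator cycles embed, and $b$ has infinite order since no relator is a proper power of a word involving $b$ alone) while $b^{f_A(n)}$ is a product of $O(\log f_A(n))$ generators via the relator $r_n$, so the distortion function is super-linear whenever $A$ is infinite. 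One must take $A$ infinite, which still leaves uncountably many choices.

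\textbf{Then} I would apply Theorem~\ref{thm:undistorted} in its contrapositive form. If some $G_A$ were isomorphic to a group $G(\Gamma)$ for a $C'(\frac{1}{6})$-labelled graph $\Gamma$ with finite label set, or for a $Gr'(\frac{1}{6})$-labelled graph with finite components and finite label set, then by Theorem~\ref{thm:undistorted} every cyclic subgroup of $G(\Gamma)$ is undistorted; but isomorphism of finitely generated groups preserves distortion of cyclic subgroups (up to the usual coarse equivalence, using that any two finite generating sets give bi-Lipschitz word metrics), contradicting the existence of the distorted $\langle b\rangle$. This settles bullets two and three simultaneously. The only subtlety here is that ``undistorted'' as defined requires $H$ to be finitely generated; since $\langle b\rangle$ is cyclic this is automatic, and the conjugation clause of Theorem~\ref{thm:undistorted} does not interfere because conjugation is an isometry of the Cayley graph and hence preserves distortion.

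\textbf{The main obstacle} I anticipate is the careful design of the relators so that three constraints hold simultaneously: (i) the classical $C(p)$-condition, which caps piece overlaps — this forces the $b$-blocks between consecutive $a$'s to be pairwise ``almost distinct'' so that no two relators share a long subword, exactly as in Example~\ref{example:distorted}; (ii) the Bowditch-type coarse invariant must be genuinely detectable in the Cayley graph and genuinely depend on $A$ — this requires tuning the growth rates so the ``encoding'' blocks $b^{f_A(n)}$ are long enough to be seen but the presentation stays $C(p)$; and (iii) the order of $b$ stays infinite and the distortion is strictly super-linear. Balancing (i) against (ii) is the real work: one wants the relators long and varied enough to carry Bowditch's invariant, yet structured enough that every piece still contains at most one letter $a$. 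I expect this to go through by choosing the inter-$a$ exponents to be an injective fast-growing sequence in $n$ (e.g.\ involving $2^n$ or a tower function inside the encoding block) while keeping the ``spacer'' exponents in a controlled arithmetic progression, mirroring and lightly generalizing the computation already carried out in Example~\ref{example:distorted}; the verification is then routine but must be done with care.
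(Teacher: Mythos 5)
Your reduction of the second and third bullets is exactly right and matches the paper: produce groups with distorted cyclic subgroups and apply Theorem~\ref{thm:undistorted} in contrapositive (distortion of a cyclic subgroup is an isomorphism invariant, independent of finite generating set). The gap is in how you propose to get uncountably many pairwise non-quasi-isometric examples. You want a single two-generator family $G_A$ whose relators simultaneously carry the distortion mechanism of Example~\ref{example:distorted} and a Bowditch-type encoding of $A\subseteq\N$, and you say Bowditch's theorem can be invoked ``essentially verbatim''. It cannot: Bowditch's computation of his quasi-isometry invariant (the taut loop length spectrum) for his groups depends crucially on a metric small cancellation condition of $C'(\lambda)$ type, which guarantees that relator loops are taut and that nothing else at comparable scales is. The relators that create distortion are precisely those that destroy this: in $r_n=ab^{2np+1}a\cdots ab^{2np+2p-1}ab^{2^n}$ the block $b^{2^n}$ has length comparable to $|r_n|$ and its long subpowers are pieces (they occur in many places among the cyclic conjugates and the other relators), so the presentation is only $C(p)$, not $C'(\lambda)$ for any useful $\lambda$; moreover the very fact that $b^{2^n}$ equals a word of length $O(np)$ means the loop $r_n$ interacts with much shorter loops, so there is no a priori control over which loops are taut. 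Establishing that your invariant is still visible and quasi-isometry invariant for such presentations would require a genuinely new argument, which your proposal does not supply.

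The paper sidesteps this by decoupling the two requirements. It fixes one group $G$ with a distorted cyclic subgroup, namely the $C(p)$-group of Example~\ref{example:distorted} (which is one-ended by Stallings' theorem, being torsion-free and not free), takes Bowditch's uncountable family $\{H_i\}$ of pairwise non-quasi-isometric one-ended groups given by classical $C'(\frac{1}{p-1})$-presentations \cite{Bow}, and sets $G_i:=G*H_i$. The free-product presentation on disjoint generating sets is still a classical $C(p)$-presentation (pieces do not mix across factors), each $G_i$ inherits the distorted cyclic subgroup from the factor $G$, and pairwise non-quasi-isometry follows from Papasoglu--Whyte \cite{PW}: free products of one-ended groups $A*B$ and $A'*B'$ are quasi-isometric if and only if $\{[A],[B]\}=\{[A'],[B']\}$. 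If you want to salvage your approach, you would either need to prove a version of Bowditch's spectrum computation valid for these non-metric presentations, or adopt some device like the free-product trick that keeps the Bowditch factor honestly $C'(\frac16)$-like while the distortion lives elsewhere.
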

 
\begin{proof}[Proof of Theorem \ref{thm:uncountable}.]
Without loss of generality, let $p\geqslant 7$. Let $G$ be the classical $C(p)$-group with distorted cyclic subgroups defined in Example~\ref{example:distorted}. Note that $G$ is one-ended by Stallings' theorem, since it is torsion-free but not free. Let $\{H_i|i\in I\}$ be an uncountable set of pairwise non-quasi-isometric 1-ended groups, each given by a classical $C'(\frac{1}{p-1})$-presentation as in \cite{Bow}. Consider the groups $G_i:=G*H_i$. By \cite[Theorem 0.4]{PW}, two free products of 1-ended groups $A*B$ and $A'*B'$ are quasi-isometric if and only if $\{[A],[B]\}=\{[A'],[B']\}$, where $[\cdot]$ denotes the quasi-isometry class of a group. This shows that the $G_i$ are pairwise non-quasi-isometric.

Each $G_i$ contains distorted cyclic subgroups and admits a classical $C(p)$-presentation with a finite generating set. Theorem~\ref{thm:undistorted} yields the remaining claims.
\end{proof}

The rest of this subsection is devoted to the proof of Theorem~\ref{thm:undistorted}. The proof of Theorem~\ref{thm:undistorted_product} will follow in a very brief remark at the end. The two cases of Theorem~\ref{thm:undistorted} are:
\begin{itemize}
 \item the graphical $Gr'(\frac{1}{6})$-case: $\Gamma=\sqcup_{i\in I}\Gamma_i$, where the $\Gamma_i$ are the connected components of $\Gamma$, and all $\Gamma_i$ are finite,
 \item the graphical $C'(\frac{1}{6})$-case.
\end{itemize}
We discuss in detail the first case. The second case is obtained as a simplification by skipping all considerations involving non-trivial automorphisms of $\Gamma$. 

Let $\Gamma$ be a $Gr'(\frac{1}{6})$-labelled graph with finite components labelled over a finite set $S$. Let $h\in G(\Gamma)$ be of infinite order. Let $g$ be an element of minimal word-length with respect to $S$ in the set $\{g'\in G(\Gamma)\mid \exists k\in \Z: (g')^k\text{ is conjugate to } h\}$. We prove that a conjugate of $\langle g \rangle$ is undistorted and quasi-convex in $\Cay(G(\Gamma),S)$. 

Let $w$ be a word of minimal length representing $g$. Note that $w$ is cyclically reduced and not a proper power. Denote by $\overline w$ the bi-infinite ray in $\Cay(G(\Gamma),S)$ that is the union of all paths starting at $1\in\Cay(G(\Gamma),S)$ that are labelled by initial subwords of all words of the form $w^k,k\in \Z$. A \emph{subword of} $\overline w$ is the label of a finite reduced path contained in $\overline w$. 
We consider three cases:

\vspace{12pt}

\noindent {\bf Case 1.} There does not exist $C_0<\infty$ such that every path $p$ in $\Gamma$ that is labelled by a subword of $\overline w$ has length at most $C_0$.

\vspace{12pt}

A \emph{period} of $\Gamma$ is a path $p$ in $\Gamma$ with $\iota p\neq \tau p$ such that there exists a label-preserving automorphism $\phi$ of $\Gamma$ with $\phi(\iota p)=\phi(\tau p)$. Note that if a period $p$ is contained in a finite component of $\Gamma$, then the label of $p$ defines a finite order element of $G(\Gamma)$.

\begin{proof}[Proof of Theorem~\ref{thm:undistorted} in case 1.]
 Suppose a path $p$ labelled by a subword of $\overline w$ has length at least $2|w|$. Since $w$ defines an infinite order element of $G(\Gamma)$, a path labelled by a cyclic conjugate of $w$ cannot be closed and it cannot be a period. Therefore, $p$ is concatenation of at most two essential pieces. Any path in $\Gamma$ that is concatenation of at most two essential pieces is a convex geodesic, since any other path with the same endpoints and at most the same length would give rise to a simple closed path violating the $Gr'(\frac{1}{6})$-condition. Therefore, $p$ is a convex geodesic in $\Gamma$, and its image in $\Cay(G(\Gamma),S)$ is a convex geodesic by \cite[Lemma 4.21]{GS}.
\end{proof}

For each $n\in\N$, let $g_n$ be a shortest word representing $g^n$, and let $B_n$ be a diagram over $\Gamma$ with boundary word $w^ng_n^{-1}$, such that each face of $B_n$ bears the label of a simple cycle in $\Gamma$ and no interior edge of $B_n$ essentially originates from $\Gamma$. Such a diagram exists by Lemma~\ref{lem:graphical_basic}. We abuse notation to decompose $\partial B_n$ into subpaths $w^n$ and $g_n$. 

\vspace{12pt}

\noindent {\bf Case 2a.} There exists $C_0$ such that every path $p$ in $\Gamma$ that is labelled by a subword of $\overline w$ has length at most $C_0$, and for every simple closed path $\gamma$ in $\Gamma$, any subpath $p$ that is labelled by a subword of $\overline w$ satisfies $|p|\leqslant \frac{|\gamma|}{2}$.

\vspace{12pt}

We use the following fact shown in the proof of \cite[Theorem 35]{Str}. Let $D$ be a diagram.  Given a boundary face $\Pi$, the \emph{exterior part} of $\Pi$ is the union of all edges in $\Pi\cap \partial D$. The \emph{exterior degree} of $\Pi$, denoted $e(\Pi)$, is the number of arcs in $\Pi\cap \partial D$. The \emph{interior degree} of $\Pi$, denoted $i(\Pi)$, is the number of arcs in $\partial \Pi$ not contained in $\partial D$ counted with multiplicity. A diagram is a \emph{$(3,7)$-diagram} if every interior face $\Pi$ satisfies $i(\Pi)\geqslant 7$.

\begin{lem}[\cite{Str}]\label{lem:strebel}
 Let $D$ be a simple disk diagram such that $\partial D=\delta_1\delta_2$, where $\delta_1$ and $\delta_2$ are immersed paths such that:
 \begin{itemize}
  \item $D$ is a $(3,7)$-diagram.
  \item Any face $\Pi$ with $e(\Pi)=1$ whose exterior part is contained in either $\delta_1$ or in $\delta_2$ satisfies $i(\Pi)\geqslant 4$.
 \end{itemize}
Then $D$ is either a single face, or it has shape $I_1$ depicted in Figure~\ref{figure:type_I}.
\end{lem}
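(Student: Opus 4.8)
The plan is to prove this by a discrete Gauss--Bonnet (combinatorial curvature) argument, in the spirit of Lemma~\ref{lem:curvature} but adapted to a simple disk diagram whose boundary carries a distinguished decomposition $\partial D=\delta_1\delta_2$; write $O=\iota\delta_1=\tau\delta_2$ and $O'=\tau\delta_1=\iota\delta_2$ for the two distinguished vertices. First I would fix a combinatorial angle assignment to the corners of the faces of $D$ so that interior vertices are flat and so that the total curvature, summed over all faces and all boundary vertices, equals a fixed positive constant $c$; the key requirements are that the curvature $\kappa(\Pi)$ of a boundary face be a non-increasing function of both its interior degree $i(\Pi)$ and its exterior degree $e(\Pi)$, that an interior face ($e(\Pi)=0$) with $i(\Pi)\geq 7$ be negatively curved, and that a face with $i(\Pi)=e(\Pi)=2$ --- a prospective ``rung'' of the ladder $I_1$ --- be exactly flat. (If rungs were not flat, arbitrarily long instances of $I_1$ would be wrongly excluded, so pinning down the normalisation is essential.) This is the bookkeeping underlying Strebel's shape classification.

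Next I would verify that, under the two hypotheses, every face of $D$ is non-positively curved except for a controlled collection of faces near $O$ and $O'$. An interior face satisfies $i(\Pi)\geq 7$ by the $(3,7)$-hypothesis, hence is negatively curved. A boundary face $\Pi$ with $e(\Pi)=1$ whose single exterior arc lies inside $\delta_1$ or inside $\delta_2$ has $i(\Pi)\geq 4$ by the second hypothesis, hence $\kappa(\Pi)\leq 0$; and any boundary face with $e(\Pi)\geq 3$, or with $e(\Pi)=2$ and $i(\Pi)\geq 2$, is non-positively curved by monotonicity. The only faces that can carry strictly positive curvature are therefore those whose exterior part is not contained in a single $\delta_j$ (so that it runs through $O$ or $O'$) and has small interior degree, together with the degenerate faces capping a self-intersection of one of the $\delta_j$. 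A short enumeration of these cases, using that the two exterior arcs of $\partial D$ meet only at $O$ and $O'$, shows that the positive-curvature budget $c$ is exactly enough to pay for at most one ``cap'' at $O$ and one at $O'$.

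With the curvature accounted for, the structure is forced: $D$ has no interior faces, every boundary face other than the (at most two) caps is a flat rung with $i(\Pi)=e(\Pi)=2$, and the face-adjacency graph is a single segment --- any branching, or any face with $i(\Pi)\geq 3$, would drop the curvature sum strictly below $c$. Hence either $D$ is a single face, or it is the linear chain $\Pi_1,\dots,\Pi_n$ with $\Pi_j\cap\Pi_{j+1}$ a single arc and no other intersections, which is shape $I_1$. I expect the main obstacle to be the combinatorial bookkeeping: choosing the angle assignment so that rungs are exactly flat while the corner contributions stay bounded, and ruling out the degenerate configurations permitted by $\delta_1,\delta_2$ being only immersed rather than embedded --- in particular a face incident to two or more arcs of the same $\delta_j$, which the $e(\Pi)=1$ hypothesis does not directly govern. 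An alternative that sidesteps the global count is induction on the number of faces, peeling off a boundary face incident to $O$ and checking it is forced to be a cap or a rung; this is essentially the case analysis of \cite[Theorem 35]{Str}.
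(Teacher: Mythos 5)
Note first that the paper does not prove this lemma at all: it is imported verbatim from the proof of \cite[Theorem 35]{Str}, so the task is to reconstruct Strebel's argument, and your combinatorial Gauss--Bonnet framework is indeed the standard and appropriate one. The genuine gap is in the final forcing step. Once angles are assigned so that the sum at each interior vertex is $2\pi$ and at each boundary vertex is $\pi$, the curvature of a face is \emph{not} a function of the pair $(e(\Pi),i(\Pi))$ alone (so the requested ``monotone normalisation'' does not exist); one only gets upper bounds of the type $\kappa(\Pi)\leqslant\frac{\pi}{3}\bigl(6-i(\Pi)-2e(\Pi)\bigr)$. Under any such admissible assignment a boundary face with $e(\Pi)=1$, $i(\Pi)=4$ all of whose corners lie at degree-$3$ vertices is \emph{exactly flat} ($(2-5)\pi+2\cdot\frac{\pi}{2}+3\cdot\frac{2\pi}{3}=0$), and so is a face with $e(\Pi)=i(\Pi)=2$ both of whose exterior arcs lie on the \emph{same} $\delta_j$. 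Neither face type occurs in shape $I_1$, yet neither ``drops the curvature sum strictly below $c$''; hence your claim that any branching or any face with $i(\Pi)\geqslant 3$ violates the budget is false as stated. What the global count, combined with the second hypothesis, does give is: no interior faces, exactly two positively curved faces, each with $e=i=1$ and exterior arc running through $O$ resp.\ $O'$, and all remaining faces exactly flat. Passing from ``all remaining faces are flat'' to ``every remaining face meets $\delta_1$ and $\delta_2$ in one arc each and the faces form a single chain'' requires an additional structural analysis (ruling out interior vertices, excluding same-side $(2,2)$-faces by examining the subdiagram they cut off, establishing the linear adjacency), and this is precisely the case analysis in Strebel's proof. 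Your closing sentence relegates that induction/peeling argument to an ``alternative''; in fact it is the missing ingredient, not an optional shortcut.

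Two smaller points. The exclusion of a third cap is correctly attributed by you to the second hypothesis rather than to curvature, and that part of your outline is fine, since a face with $e(\Pi)=1$ and $i(\Pi)\leqslant 3$ whose exterior arc avoids $O$ and $O'$ is forbidden by hypothesis. Also, since $D$ is a \emph{simple} disk diagram, $\partial D$ is an embedded circle, so $\delta_1$ and $\delta_2$ are automatically embedded and the ``degenerate faces capping a self-intersection of $\delta_j$'' you budget for cannot occur; this worry is harmless but indicates the hypotheses were not fully used.
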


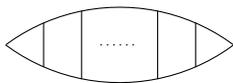
\begin{figure}\label{figure:type_I}
\begin{tikzpicture}[line cap=round,line join=round,x=1.0cm,y=1.0cm,scale=.5]
\draw [shift={(2,-3)}] plot[domain=0.93:2.21,variable=\t]({1*5*cos(\t r)+0*5*sin(\t r)},{0*5*cos(\t r)+1*5*sin(\t r)});
\draw [shift={(2,5)}] plot[domain=4.07:5.36,variable=\t]({1*5*cos(\t r)+0*5*sin(\t r)},{0*5*cos(\t r)+1*5*sin(\t r)});
\draw (0,1.58)-- (0,0.42);
\draw (4,1.58)-- (4,0.42);
\draw (1,1.9)-- (1,0.1);
\draw (3,1.9)-- (3,0.1);
\draw [dotted] (1.5,1)-- (2.5,1);
\end{tikzpicture}
\caption{A diagram $D$ of \emph{shape $I_1$}. All faces except the two extremal ones are optional, i.e. $D$ may have as few as 2 faces.}
\end{figure}

\begin{proof}[Proof of Theorem~\ref{thm:undistorted} in case 2a.]
In case 2a, $B_n$ has the following properties:
\begin{itemize}
 \item For any two faces $\Pi$ and $\Pi'$ of $B_n$, we have $|\partial \Pi\cap\partial \Pi'|<\frac{|\partial\Pi|}{6}$.
 \item $\partial B_n$ decomposes into two immersed paths $\delta_1:=w^n$ and $\delta_2:=g_n$, and any face $\Pi$ such that $\Pi\cap\partial B_n$ is an arc in $\delta_i$ ($i\in \{1,2\}$) satisfies $|\Pi\cap \delta_i|\leqslant \frac{|\partial \Pi|}{2}$.
\end{itemize}

Thus every disk component of $B_n$ satisfies the assumptions of Lemma~\ref{lem:strebel} and, hence, has the shape depicted in Figure~\ref{figure:type_I}. Thus, for every face $\Pi$ we have $|\Pi\cap g_n|>|\partial\Pi|-\frac{|\partial\Pi|}{2}-2\frac{|\partial\Pi|}{6}=\frac{|\partial\Pi|}{6}$. Therefore, 
\begin{equation*}\label{eqn:qi} 3|g_n|>|w^n|=n|w|,\end{equation*}
and in this case $\langle g\rangle$ is undistorted. 
Moreover, every $\Pi$ satisfies $\frac{|\partial \Pi|}{6}=|\partial \Pi|-\frac{|\partial \Pi|}{2}-2\frac{|\partial \Pi|}{6}<|\Pi\cap w_n| \leqslant C_0$. Therefore, in $B_n$, $g_n$ and $w^n$ are within Hausdorff-distance $C:=6C_0+|w|$ of each other. Since the 1-skeleton of $B_n$ maps to $\Cay(G(\Gamma),S)$, the images of $g_n$ and $w^n$ are also $C$-close in $\Cay(G(\Gamma),S)$.
\end{proof}


\noindent {\bf Case 2b.} There exists $C_0$ such that every path $p$ in $\Gamma$ that is labelled by a subword of $\overline w$ has length at most $C_0$, and there exists a simple closed path $\gamma$ in $\Gamma$ having a subpath $p$ that is labelled by a subword of $\overline w$ and satisfies $|p|> \frac{|\gamma|}{2}$.

\vspace{12pt}

Given a path $\gamma$, we write $\gamma\cap\overline w$ for a maximal subpath of $\gamma$ labelled by a subword of $\overline w$. Given a path $p$ and a natural number $k$, the subpath of $p$ \emph{exceeding} $k$ is the terminal subpath of $p$ that starts at the terminal vertex of the initial subpath of length $k$ of $p$.

\begin{lem}\label{lem:inequalities}
 Let $\gamma$ be a simple closed path in $\Gamma$ with $\frac{|\gamma|}{2}<|\gamma\cap\overline w|$. Then $|w|<|\gamma\cap\overline w|<|w|+\frac{|\gamma|}{6}$, $2|w|\leqslant|\gamma|<3|w|$, and $|\gamma\cap\overline w|<\frac{2|\gamma|}{3}$.
\end{lem}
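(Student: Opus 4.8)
~\textbf{Proof strategy for Lemma~\ref{lem:inequalities}.}
The plan is to extract the three inequalities by combining two facts: first, $w$ is a shortest word representing $g$ (and $g$ has minimal length among roots of conjugates of $h$), so no subword of $\overline w$ that is ``long'' can equal a shorter word in $G(\Gamma)$; second, $\gamma$ is a simple closed path, so its label is trivial in $G(\Gamma)$ and $\gamma$ satisfies the $Gr'(\frac16)$-condition, meaning every essential piece occupies less than $\frac{|\gamma|}{6}$ of it. Throughout I will identify $\overline w$ with the bi-infinite path in $\Cay(G(\Gamma),S)$ and use that any subword of $\overline w$ of length $\leqslant|w|$ (in fact, less than $|w|$ in the relevant spots) is a geodesic, since $w$ is cyclically reduced, not a proper power, and of minimal length.

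First I would prove $|w|<|\gamma\cap\overline w|$. The hypothesis gives $|\gamma\cap\overline w|>\frac{|\gamma|}{2}$, so it suffices to rule out $|\gamma\cap\overline w|\leqslant|w|$; the point is that if $|\gamma\cap\overline w|\leqslant |w|$ then we also have $|\gamma|<2|\gamma\cap\overline w|\leqslant 2|w|$, and the complementary subpath $\gamma\setminus(\gamma\cap\overline w)$ has length $|\gamma|-|\gamma\cap\overline w|<\frac{|\gamma|}{2}<|\gamma\cap\overline w|$, exhibiting the subword of $\overline w$ of length $|\gamma\cap\overline w|$ as equal in $G(\Gamma)$ to a strictly shorter word. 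Since this subword is a geodesic in $\Cay(G(\Gamma),S)$ (as $|\gamma\cap\overline w|\leqslant|w|$), that is a contradiction. Hence $|w|<|\gamma\cap\overline w|$, which forces $2|w|<2|\gamma\cap\overline w|$; combined with $|\gamma\cap\overline w|>\frac{|\gamma|}{2}$ this does not yet directly give $2|w|\leqslant|\gamma|$, so for that bound I argue instead that the portion $\gamma\setminus(\gamma\cap\overline w)$, of length $|\gamma|-|\gamma\cap\overline w|$, must be long enough to ``close up'' a subword of $\overline w$ of length $|\gamma\cap\overline w|-$(nothing is a proper power): more carefully, write the geodesic subword of $\overline w$ of length exactly $|w|$ sitting inside $\gamma\cap\overline w$; closing the path $\gamma$ equates it to a word of length $|\gamma|-|w|$, and minimality of $|w|$ forces $|\gamma|-|w|\geqslant|w|$, i.e. $|\gamma|\geqslant 2|w|$.

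Next I would bound $|\gamma\cap\overline w|$ from above by $|w|+\frac{|\gamma|}{6}$. The subpath of $\gamma\cap\overline w$ exceeding $|w|$ has length $|\gamma\cap\overline w|-|w|$; since $w$ is not a proper power and cyclically reduced, this overlapping tail is an essential piece of $\Gamma$ (it is a subword of $\overline w$ appearing in two essentially distinct locations along $\overline w$, hence along $\gamma$ which is simple), so the $Gr'(\frac16)$-condition gives $|\gamma\cap\overline w|-|w|<\frac{|\gamma|}{6}$. This also yields $|\gamma|<3|w|$: indeed $\frac{|\gamma|}{2}<|\gamma\cap\overline w|<|w|+\frac{|\gamma|}{6}$, so $\frac{|\gamma|}{2}-\frac{|\gamma|}{6}<|w|$, i.e. $\frac{|\gamma|}{3}<|w|$. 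Finally $|\gamma\cap\overline w|<\frac{2|\gamma|}{3}$ follows from $|\gamma\cap\overline w|<|w|+\frac{|\gamma|}{6}$ together with $|w|<\frac{|\gamma|}{2}$, giving $|\gamma\cap\overline w|<\frac{|\gamma|}{2}+\frac{|\gamma|}{6}=\frac{2|\gamma|}{3}$.

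The main obstacle I expect is making the ``overlap is an essential piece'' step fully rigorous: I need that the tail of $\gamma\cap\overline w$ of length $|\gamma\cap\overline w|-|w|$ genuinely gives two essentially distinct maps into $\Gamma$ (equivalently, into $\overline w$, then pushed into $\Gamma$), which relies on $w$ being cyclically reduced and not a proper power, and on the fact that $\gamma$ being simple rules out the degenerate coincidence of the two occurrences under a label-preserving automorphism; this uses the minimality of $g$ to exclude $w$ being conjugate into a torsion element of $G(\Gamma)$ via a short relator. The other bookkeeping — that short subwords of $\overline w$ are geodesics, and the arithmetic manipulations — is routine.
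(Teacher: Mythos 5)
Your proof is correct and takes essentially the same route as the paper: the subpath of $\gamma\cap\overline w$ exceeding $|w|$ is an essential piece, so the $Gr'(\frac{1}{6})$-condition gives $|\gamma\cap\overline w|<|w|+\frac{|\gamma|}{6}$; the fact that no word representing a conjugate of $g$ is shorter than $|w|$ gives $2|w|\leqslant|\gamma|$; and the remaining bounds are arithmetic, with your direct geodesic argument for $|w|<|\gamma\cap\overline w|$ being a harmless variant of the paper's deduction of it from $2|w|\leqslant|\gamma|$ and the hypothesis. One correction: the two occurrences of the exceeding subpath are essentially distinct not because of the minimality of $g$, but because a label-preserving automorphism of $\Gamma$ identifying them would make a path labelled by a cyclic conjugate of $w$ a period, and a period in a finite component defines a finite-order element of $G(\Gamma)$, contradicting that $g$ has infinite order (also, at the last step you only know $2|w|\leqslant|\gamma|$ rather than the strict $|w|<\frac{|\gamma|}{2}$ you invoke, but the strictness of $|\gamma\cap\overline w|<|w|+\frac{|\gamma|}{6}$ still yields $|\gamma\cap\overline w|<\frac{2|\gamma|}{3}$).
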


\begin{proof}
 For the second part of the first inequality, note that the subpath of $\gamma\cap\overline w$ exceeding $|w|$ is an essential piece since any path labelled by a cyclic conjugate of $w$ is not a period. By the $Gr'(\frac{1}{6})$-assumption, this implies that $|\gamma\cap\overline w|<|w|+\frac{|\gamma|}{6}$. 
 
 The first part of the second inequality follows since no word representing a conjugate of $g$ can be shorter than $|w|$, whence $2|w|\leqslant |\gamma|$. The second part follows from the second part of the first inequality and the assumption that $\frac{|\gamma|}{2}<|\gamma\cap\overline w|$. The first part of the first inequality follows from the assumption that $\frac{|\gamma|}{2}<|\gamma\cap\overline w|$ and the first part of the second inequality. The third inequality follows immediately from the first two.
 \end{proof}


 \begin{cor}\label{cor:degree3}
 Let $\Pi$ be a face of $B_n$ such that $e(\Pi)=1$ and such that the exterior part of $\Pi$ is contained in $w^n$. Then $i(\Pi)\geqslant 3$.
\end{cor}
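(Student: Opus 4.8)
The plan is to decompose $\partial\Pi$ into its single exterior arc together with its interior arcs, to bound the interior arcs using the $Gr'(\frac{1}{6})$-condition and the exterior arc using Lemma~\ref{lem:inequalities}, and then to compare these bounds against $d(\Pi)$, which will force $i(\Pi)\geqslant 3$. For the setup: since every face of $B_n$ bears the label of a simple closed path in $\Gamma$, the boundary cycle $\partial\Pi$ lifts to a simple closed path $\gamma$ in $\Gamma$ with $|\gamma|=d(\Pi)$; and since $e(\Pi)=1$, one traversal of $\partial\Pi$ is a single exterior arc $a$, which is contained in the $w^n$-part of $\partial B_n$, followed by $i(\Pi)$ interior arc-traversals, so $d(\Pi)=|a|+(\text{total length of the interior arc-traversals})$.

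I would then estimate the two contributions. Each interior arc-traversal $c$ is made of interior edges of $B_n$; as $B_n$ was chosen so that no interior edge essentially originates from $\Gamma$, the arc $c$ does not essentially originate from $\Gamma$ either, hence (by the discussion preceding Lemma~\ref{lem:graphical_basic}) it is an essential piece, and it is a subpath of the simple closed path $\gamma=\partial\Pi$; so the $Gr'(\frac{1}{6})$-condition gives $|c|<\frac{|\gamma|}{6}$, and the interior arc-traversals together have length less than $i(\Pi)\cdot\frac{|\gamma|}{6}$ (and length $0$ if $i(\Pi)=0$). For the exterior arc, $a$ is labelled by a subword of $w^n$, hence by a subword of $\overline w$; taking a maximal subpath $\gamma\cap\overline w$ of $\gamma$ that is labelled by a subword of $\overline w$ and contains $a$, we obtain $|a|\leqslant|\gamma\cap\overline w|$.

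The conclusion then follows by contradiction: if $i(\Pi)\leqslant 2$, then $|a|>d(\Pi)-2\cdot\frac{|\gamma|}{6}=\frac{2|\gamma|}{3}>\frac{|\gamma|}{2}$ in every case, so $|\gamma\cap\overline w|\geqslant|a|>\frac{|\gamma|}{2}$ and Lemma~\ref{lem:inequalities} applies to $\gamma$, yielding $|\gamma\cap\overline w|<\frac{2|\gamma|}{3}$; this contradicts $|a|\leqslant|\gamma\cap\overline w|$ together with $|a|>\frac{2|\gamma|}{3}$. Hence $i(\Pi)\geqslant 3$. The step I expect to demand the most care is the passage from ``no interior \emph{edge} of $B_n$ essentially originates from $\Gamma$'' to ``each interior arc-traversal of $\partial\Pi$ is an essential piece and a subpath of $\gamma$'', together with being explicit about which lift of $a$ one feeds into Lemma~\ref{lem:inequalities}; once these points are settled the rest is a one-line length count.
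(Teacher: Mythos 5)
Your proposal is correct and follows essentially the same route as the paper: bound each interior arc by $\frac{|\partial\Pi|}{6}$ via the small cancellation condition (interior arcs are essential pieces since no interior edge essentially originates from $\Gamma$) and bound the exterior part by $\frac{2|\partial\Pi|}{3}$ via Lemma~\ref{lem:inequalities}. The paper phrases this as a two-case split ($|\Pi\cap w^n|\leqslant\frac{|\partial\Pi|}{2}$ giving $i(\Pi)\geqslant 4$, otherwise the lemma giving $i(\Pi)\geqslant 3$) rather than your single contradiction, but the content is the same.
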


\begin{proof}
If a face $\Pi$ as above satisfies $|\Pi\cap w^n|\leqslant \frac{|\partial \Pi|}{2}$, then $i(\Pi)\geqslant 4$ by the small cancellation assumption. If $|\Pi\cap w^n|> \frac{|\partial \Pi|}{2}$, then $|\Pi\cap w^n|<\frac{2|\partial \Pi|}{3}$, whence $i(\Pi)\geqslant 3$, again by the small cancellation assumption.
\end{proof}
 
 

 

\begin{lem}\label{lem:sigma}
 In $\Gamma$ there exists a path $\sigma$ of length $|w|<|\sigma|<2|w|$ whose label is a subword of $\overline w$ such that for any path $p$ in $\Gamma$ whose label is a cyclic conjugate of $w$ there exists a unique subpath $p'$ of $\sigma$ (with the same orientation as $\sigma$) and a label-preserving automorphism $\phi$ of $\Gamma$ with $\phi(p)=p'$. 
\end{lem}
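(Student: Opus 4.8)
The plan is to take $\sigma$ to be a maximal reduced path in $\Gamma$ whose label is a subword of $\overline w$ and which contains an isomorphic copy of the subpath $\gamma\cap\overline w$ of Case~2b; here ``maximal'' means $\sigma$ is not a proper subpath of another reduced path in $\Gamma$ labelled by a subword of $\overline w$ (such a path exists because all lengths are bounded by $C_0$). The bound $|w|<|\sigma|$ is then immediate from $|\sigma|\geq|\gamma\cap\overline w|>|w|$, which is part of Lemma~\ref{lem:inequalities}. Two basic principles drive everything else. \emph{(P1) No periodic conjugate.} If $\tau_1\tau_2$ is a reduced path in $\Gamma$ with $\ell(\tau_1)=\ell(\tau_2)=v$ for a cyclic conjugate $v$ of $w$, then $\tau_1$ and $\tau_2$ are essentially distinct: a label-preserving automorphism carrying $\tau_1$ to $\tau_2$ would exhibit $\tau_1$ as a period (its endpoints are distinct, as $v\neq 1$ in $G(\Gamma)$), forcing $v=\ell(\tau_1)$ to have finite order in $G(\Gamma)$ and contradicting that $v$ is conjugate to the infinite-order element $h$. \emph{(P2) Conjugates are not short pieces.} A path labelled by a cyclic conjugate $v$ of $w$ that is a subpath of a simple closed path $\gamma'$ with $|\gamma'|<6|w|$ is not an essential piece, since $|v|=|w|>|\gamma'|/6$ would violate the $Gr'(\frac{1}{6})$-condition; hence all occurrences in $\Gamma$ of such a $v$ are related by label-preserving automorphisms.

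The heart of the argument is a \emph{propagation} step showing that every cyclic conjugate of $w$ occurring in $\Gamma$ lies on a simple closed path of length $<3|w|$ and is, up to a label-preserving automorphism, a subpath of $\sigma$. One starts with the conjugate $v_0$ labelling the initial $|w|$-subpath of $\gamma\cap\overline w$: it lies on $\gamma$, and $|\gamma|<3|w|$ by Lemma~\ref{lem:inequalities}, so by (P2) it is not an essential piece and all its occurrences are automorphic to it. Then one propagates along $\overline w$: if $v'=\overline w[i',i'+|w|]$ occurs in $\Gamma$ and shares with an already-controlled occurrence of $v=\overline w[i,i+|w|]$ (with $0<i'-i<|w|$) the overlap word $u=\overline w[i',i+|w|]$, then $|u|=|w|-(i'-i)$ is too long to be an essential piece on the length-$<3|w|$ loop carrying $v$ (this requires $|u|\geq|\gamma|/6$, i.e. $i'-i$ not too large; gaps in the set of occurring positions are bridged by chaining, and a gap longer than $|w|/2$ already confines all occurring positions to an arc of $\overline w$ of length $<|w|$, so no longer jump is ever needed). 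Hence the occurrence of $u$ inside the $v$-occurrence and the occurrence of $u$ inside the $v'$-occurrence coincide in $\Gamma$, so the two occurrences amalgamate into a path labelled $\overline w[i,i'+|w|]$. Iterating realizes every occurring conjugate of $w$, up to a label-preserving automorphism, as a subpath of the single maximal path $\sigma$.

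The lemma follows. If $|\sigma|\geq 2|w|$, then $\ell(\sigma)$ has a subword $v^2$ with $v=\overline w[a,a+|w|]$ a cyclic conjugate of $w$, realized by a reduced subpath $\tau_1\tau_2$ of $\sigma$ with $\ell(\tau_1)=\ell(\tau_2)=v$; by (P1) these are essentially distinct, so $v$ is an essential piece, contradicting (P2), since $v$ lies on a simple closed path of length $<3|w|$ by the propagation step. Hence $|\sigma|<2|w|$, so $\ell(\sigma)$ is a subword of $\overline w$ of length $<2|w|$ and contains each cyclic conjugate of $w$ at most once, which gives uniqueness of the subpath $p'$. Finally, for any path $p$ in $\Gamma$ labelled by a cyclic conjugate $v$ of $w$, the propagation step yields a subpath $p'$ of $\sigma$ with $\ell(p')=v$ (with the orientation of $\sigma$, since $\ell(p')$ is a subword of $\ell(\sigma)$), and (P2) provides the label-preserving automorphism $\phi$ with $\phi(p)=p'$.

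I expect the propagation step to be the main obstacle. One must carry along the invariant ``this occurrence lies on a simple closed path of length $<3|w|$'' throughout, so that the $Gr'(\frac{1}{6})$-condition governs every overlap; one must organize the chaining so that it reaches \emph{all} occurring conjugates even when the occurring positions in $\overline w$ do not form an interval; and one must exclude the occurrences from ``wrapping all the way around'' $\overline w$ --- which is precisely what (P1)--(P2) together with the primitivity of $w$ (which makes the $|w|$ cyclic conjugates pairwise distinct words) prevent, since a full wrap would force a square $v^2$ into $\sigma$ with $v$ simultaneously lying on a short loop.
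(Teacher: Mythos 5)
There is a genuine gap, and it sits exactly where you predicted: the ``propagation'' step. Your chaining argument needs, at every stage, that the already-controlled occurrence of the current cyclic conjugate lies on a simple closed path of length $<3|w|$, so that the $Gr'(\frac{1}{6})$-condition can be applied to the overlap $u$; and your final steps (the bound $|\sigma|<2|w|$ via the square $v^2$, and the final production of $\phi$ from (P2)) likewise invoke the claim that \emph{every} occurring conjugate lies on such a short loop. But this invariant is never established and is not maintainable by the chaining itself: after the first amalgamation, the controlled occurrence extends beyond $\gamma\cap\overline w$, and the portion of $\sigma$ outside $\gamma\cap\overline w$ need not lie on any simple closed path at all (let alone one of length $<3|w|$), since the $Gr'(\frac{1}{6})$-condition gives no information about paths that are not subpaths of simple closed paths. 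Moreover $|\gamma\cap\overline w|<|w|+\frac{|\gamma|}{6}$ by Lemma~\ref{lem:inequalities}, so the region where your overlaps are genuinely covered by $\gamma$ is barely longer than one period, and the ``gaps are bridged by chaining'' remark does not repair this. So the proposal, as written, is circular at its core step.

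The paper avoids the chaining entirely by a one-step comparison with the fixed occurrence $\sigma'\subseteq\gamma$: for an arbitrary path $\delta$ labelled by a cyclic conjugate of $w$, write $\ell(\sigma')\equiv uv$ and $\ell(\delta)\equiv vu$, so that $\delta$ and $\sigma'$ share an \emph{abstract} common subword ($u$ or $v$) of length at least $\frac{|w|}{2}$, whose occurrence inside $\sigma'$ is a subpath of $\gamma$; since $|\gamma|<3|w|$, an essential piece on $\gamma$ has length $<\frac{|\gamma|}{6}<\frac{|w|}{2}$, so the two occurrences of this half essentially coincide and $\delta$ is carried by an automorphism into the maximal path $\sigma\supseteq\sigma'$. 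The only paths ever tested against the small cancellation condition are subpaths of $\gamma$, which is exactly what your scheme lacks. (Your uniqueness argument via primitivity, and your use of the non-period observation (P1), are fine and close in spirit to the paper, which derives uniqueness from the finite order of an automorphism of a finite component producing a closed path labelled by a power of $w$; but as you set things up, both $|\sigma|<2|w|$ and the existence of $\phi$ rest on the unproved propagation claim, so the proof does not go through without replacing that step by something like the half-word comparison above.)
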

\begin{proof}
 Fix a path $\sigma'$ that is labelled by a cyclic conjugate of $w$ such that $\sigma'$ is contained in a simple closed path $\gamma$. Let $\delta$ be a path labelled by a cyclic conjugate of $w$. Then we may write $\ell(\sigma')\equiv uv$ and $\ell(\delta)\equiv vu$ ($\equiv$ denoting equality without cancellation). In particular $\max\{|v|,|u|\}\geqslant\frac{|w|}{2}$.  Since $|\gamma|<3|w|$ by Lemma~\ref{lem:inequalities}, a subpath of $\sigma'$ that is an essential piece cannot have length at least $\frac{|w|}{2}$. Therefore, there exists a label-preserving automorphism $\phi$ of $\Gamma$ such that $\phi(\delta)$ is contained in the maximal path $\sigma$ in $\Gamma$ that contains $\sigma'$ and that is labelled by a subword of $\overline w$.
 
All subpaths of $\sigma$ of length $|w|$ with the same orientation as $\sigma$ are labelled by cyclic conjugates of $w$. Since no word can be conjugate to its inverse in the free group, this shows that all subpaths of $\sigma$ of labelled by cyclic conjugates of $w$ have the same orientation as $\sigma$.
 
 Now assume $\delta$ is a subpath of $\sigma$ labelled by a cyclic conjugate of $w$, and suppose there exists an automorphism $\phi$ of $\Gamma$ such that $\phi(\delta)$ is a subpath of $\sigma$ with $\delta\neq \phi(\delta)$. 
 Consider the shortest subpath $\delta'$ of $\sigma$ containing both $\delta$ and $\phi(\delta)$, and denote by $k$ the finite order of $\phi$. Then the union $\delta'\cup\phi(\delta')\cup\dots\cup\phi^{k-1}(\delta')$ contains a closed path labelled by a power of (a cyclic conjugate of) $w$. Here we use that $w$ is not a proper power. This contradicts the fact that $g$ has infinite order. Therefore, the map $\delta\to\sigma$ is unique.
 
 The statement on the length of $\sigma$ follows from the uniqueness of the map taking initial subpath of $\sigma$ of length $|w|$ to $\sigma$.
\end{proof}

By cyclically conjugating $w$, we may assume that $w$ is the label of an initial subpath of $\sigma$. We from now on fix the notation for $\sigma$ (i.e. we choose one fixed $\sigma$).

Consider a diagram $B_n$. Let $\pi$ be an arc in the intersection of $w^n$ and a face $\Pi$. 
We call $\pi$ \emph{special} if there exists a 
map $\pi\to\sigma$ and for all maps $\pi\to\sigma$ the maps $\pi\to\partial\Pi\to\Gamma$ and $\pi\to\sigma\to\Gamma$ essentially coincide. 
We call a face special if $\Pi\cap w^n$ is a special arc. 

Let $D$ be a diagram, and let $\Pi$ and $\Pi'$ intersect $\partial D$. We call $\Pi$ and $\Pi'$ \emph{consecutive} if $\Pi\cap\Pi'$ contains an edge incident at $\partial D$.

\begin{lem}\label{lem:consecutive} Let $\Pi$ and $\Pi'$ be consecutive special faces of $B_n$. Let $a$ be an arc in $\Pi\cap\Pi'$ incident at $w^n$, and let $\pi=\Pi\cap w^n$ and $\pi'=\Pi'\cap w^n$. Suppose $|\pi|\geqslant |w|$. Then $|\pi|+|a|<|w|+\frac{|\partial\Pi|}{6}$.
\end{lem}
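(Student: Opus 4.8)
The plan is to exhibit, inside $\partial\Pi$, an essential piece of length exactly $|\pi|+|a|-|w|$ and then read off the inequality from the $Gr'(\frac16)$-condition. Since $|\pi|\geq|w|$, I would write $\pi=\pi_0\pi_1$, where $\pi_0$ is the initial subpath of $\pi$ of length $|w|$ and $\pi_1$ is the subpath of $\pi$ exceeding $|w|$; orienting $\partial B_n$ so that $w^n$ is read forwards, $a$ is attached to $\pi$ at $\tau\pi$, so that $\pi_1$ is the subpath of $\pi$ adjacent to $a$ and $\pi_1 a$ is a subpath of $\partial\Pi$. Because $\Pi$ bears the label of a nontrivial simple closed path of $\Gamma$, $\partial\Pi$ lifts to such a path of length $|\partial\Pi|$; hence once $\pi_1 a$ is shown to be an essential piece, the $Gr'(\frac16)$-condition forces $|\pi_1|+|a|=|\pi_1 a|<\frac{|\partial\Pi|}{6}$, and adding $|w|$ on both sides (using $|w|+|\pi_1|=|\pi|$) yields the assertion. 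If $|\pi|=|w|$ then $\pi_1$ is trivial and $\pi_1 a=a$, so this case is immediate; in general, $|\pi|\leq|\sigma|<2|w|$ by specialness of $\Pi$ and Lemma~\ref{lem:sigma}, so $0<|\pi_1|<|w|$, and by periodicity of $\overline w$ the label $\ell(\pi_1)$ is a proper prefix of $\ell(\pi_0)$, which is a cyclic conjugate of $w$.

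The crux is thus the claim that $\pi_1 a$ is an essential piece. I would use three facts. First, $a$ lies in $\Pi\cap\Pi'$ and so consists of interior edges of $B_n$; since no interior edge of $B_n$ essentially originates from $\Gamma$, the lift of $a$ via $\partial\Pi$ is essentially distinct from the lift of $a$ via $\partial\Pi'$, so $a$ is an essential piece. Second, since $\Pi$ is special, $\pi\to\partial\Pi\to\Gamma$ essentially coincides with a label-preserving map of $\pi$ onto a subpath of $\sigma$, and composing with the witnessing automorphism I may assume the $\partial\Pi$-lift of $\pi$, hence of $\pi_1$, literally lies in $\sigma$. Third, writing $\sigma=\sigma_1\sigma_2$ with $\ell(\sigma_1)=w$, the label $\ell(\sigma_2)$ is itself a prefix of $w$, so $\ell(\pi_1)$ occurs inside $\sigma_1$; let $\rho$ be the corresponding subpath of $\sigma_1$. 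Then $\rho$ and the $\partial\Pi$-lift of $\pi_1$ are two occurrences inside $\sigma$ of a path labelled $\ell(\pi_1)$, and they are essentially distinct: if an automorphism $\phi$ of $\Gamma$ carried one onto the other, extending both along $\sigma$ (whose labels are governed by $w^\infty$) and iterating $\phi$ would produce a closed path labelled by a power of a cyclic conjugate of $w$, contradicting that $g$ has infinite order — this is the argument already used in the proofs of Lemmas~\ref{lem:inequalities} and \ref{lem:sigma}.

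To finish, I would extend $\rho$ along $\sigma$ by the edges continuing $\pi_1$: by periodicity of $\overline w$, the continuation of $\rho$ in $\sigma$ carries the same labels as the continuation $\pi_1 a$ for as long as $a$ stays on the $\overline w$-track of $\partial\Pi$, which produces a second occurrence of $\ell(\pi_1 a)$, essentially distinct from the first by the same iteration argument; the portion of $a$ (if any) that leaves the $\overline w$-track has to be absorbed using the second lift of $a$ coming from $\Pi'$, and I expect the hypothesis that $\Pi'$ is itself special to be precisely what keeps $a$ close to $\sigma$. Organising this last step — the case in which $a$ leaves the $\overline w$-track, and verifying essential distinctness of the two occurrences throughout — is the step I expect to be the main obstacle; everything else is routine bookkeeping with subpaths of $\sigma$, $\partial\Pi$ and $\partial\Pi'$ once the orientations are fixed as above.
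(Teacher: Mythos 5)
Your overall skeleton matches the paper's: decompose $\pi$ as (initial part of length $|w|$)$\cdot\delta$ with $\delta$ the subpath exceeding $|w|$, show $\delta a$ is an essential piece, and read off $|\delta a|<\frac{|\partial\Pi|}{6}$ from the small cancellation condition. But the crux you flag as ``the main obstacle'' is exactly the content of the lemma, and your proposed route to it does not work. Your second occurrence of $\ell(\delta a)$ is built by sliding a copy $\rho$ of $\ell(\delta)$ inside $\sigma$ and continuing it ``by periodicity of $\overline w$'' along the letters of $a$. This fails at the very first edge of $a$: the arc $a$ lies in $\Pi\cap\Pi'$, so it is an interior arc whose label has no reason to be a subword of $\overline w$; there is no ``$\overline w$-track'' for $a$ to stay on, and specialness of $\Pi'$ does not keep $a$ ``close to $\sigma$'' in any sense you can use. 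Moreover your $\rho$ is only located by saying $\ell(\delta)$ occurs somewhere in $\sigma_1$; to concatenate it with a lift of $a$ you need a copy of $\delta$ whose \emph{terminal vertex} is precisely the point of $\sigma$ where the lift of $a$ begins, and your proposal never produces that matching.

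The missing step is the precise use of specialness of $\Pi'$. With $\tau\pi=\iota\pi'=\iota a$ on $w^n$, let $v\in\sigma$ be the image of $\tau\pi$ under the lift $\partial\Pi\to\Gamma$ taking $\pi$ into $\sigma$, and let $v'\in\sigma$ be the vertex at distance $|w|$ to the left of $v$. One shows that a lift $\partial\Pi'\to\Gamma$ can be chosen sending $\iota\pi'$ to $v'$: map the maximal initial subpath $\pi_0'$ of $\pi'$ of length at most $|w|$ into $\sigma$ starting at $v'$ (this is where periodicity of $\overline w$ is legitimately used, on $\pi_0'$, not on $a$), and extend it to all of $\partial\Pi'$ either by specialness of $\Pi'$ (if $\pi_0'=\pi'$) or by essential uniqueness of maps of paths of length $|w|$ (if $|\pi_0'|=|w|$). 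The second lift of $\delta a$ is then the concatenation of the period-shifted copy of $\delta$ ending at $v'$ with the $\partial\Pi'$-lift of $a$ — i.e.\ the second lift of $a$ comes from $\Pi'$, not from $\sigma$. Essential distinctness of the two lifts of $\delta a$ is then immediate from Lemma~\ref{lem:sigma}: no label-preserving automorphism of $\Gamma$ can carry $v$ to $v'$, since they are distinct vertices of $\sigma$ occupying different positions relative to the (essentially unique) copies of cyclic conjugates of $w$ in $\sigma$. Without this construction your argument establishes only that $a$ and $\delta$ are separately essential pieces, which gives the weaker bound $|\pi|+|a|<|w|+\frac{2|\partial\Pi|}{6}$ and is not enough for Corollary~\ref{cor:interior_degree3}.
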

\begin{proof} 
We consider $\pi$ and $\pi'$ as paths with the orientation coming from $w^n$. By symmetry, we may assume that $\pi\pi'$ is a subpath of $w^n$. Choose the unique lift $\partial \Pi\to \Gamma$ that takes $\pi$ to $\sigma$. Let $\pi_0'$ be the maximal initial subpath of $\pi'$ of length at most $|w|$.
Then $\pi_0'$ has a map to $\sigma$ that takes $\iota \pi'$ to the vertex $v'$ in $\sigma$ that is at distance $|w|$ from the image $v$ of $\tau \pi$. (This means $v'$ sits at distance $|w|$ to the left of $v$, if we read the label of $\sigma$ from left to right.) If $\pi_0'=\pi'$, then, since $\pi'$ is special, we may extend this lift to a lift $\partial \Pi'\to\Gamma$. If $\pi'\neq \pi_0'$, then $|\pi_0'|=|w|$ and therefore, the map $\pi_0'\to\sigma$ is the essentially unique map $\pi_0'\to\Gamma$, and we may again extend it.

Consider the subpath $\delta$ of $\pi$ exceeding $|w|$. Then there are two distinct lifts of the concatenation $\delta a$ to $\Gamma$, one with image containing $v$ (the one via $\partial\Pi\to\Gamma$), and one with image containing $v'$ (this lift coincides on $a$ with the lift $\partial \Pi'\to\Gamma$). These lifts are essentially distinct since no automorphism of $\Gamma$ can take $v$ to $v'$. Therefore, $\delta a$ is an essential piece, whence the claim follows.
\end{proof}

\begin{cor}\label{cor:interior_degree3}
 Let $\Pi$ and $\Pi'$ be consecutive faces of $B_n$ such that $e(\Pi)=e(\Pi')=1$ and such that the exterior part of both is contained in $w^n$. Then not both have interior degree 3.
\end{cor}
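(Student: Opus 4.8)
The plan is to argue by contradiction: assume $i(\Pi)=i(\Pi')=3$, and write $\pi:=\Pi\cap w^n$, $\pi':=\Pi'\cap w^n$.

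First I would reduce to the case that $\pi$ and $\pi'$ are long. Since $e(\Pi)=1$, the arcs of $\partial\Pi$ other than $\pi$ are interior, hence essential pieces, hence of length $<\frac{|\partial\Pi|}{6}$; if $|\pi|\leqslant\frac{|\partial\Pi|}{2}$ there would be at least four of them, contradicting $i(\Pi)=3$, so $|\pi|>\frac{|\partial\Pi|}{2}$ (this is the argument of Corollary~\ref{cor:degree3}). As the boundary word of $\Pi$ is the label of a simple closed path $\gamma$ of $\Gamma$ with $|\gamma\cap\overline w|\geqslant|\pi|>\frac{|\gamma|}{2}$, Lemma~\ref{lem:inequalities} applies and gives $2|w|\leqslant|\partial\Pi|<3|w|$, so $|w|<|\pi|<\frac{3|w|}{2}$, and likewise for $\pi'$.

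Next I would check that $\Pi$ and $\Pi'$ are special in the sense preceding Lemma~\ref{lem:consecutive}. The lift $\pi\to\partial\Pi\to\Gamma$ realises $\pi$ as a reduced path of $\Gamma$ labelled by a subword of $\overline w$ of length $>|w|$; choosing a length-$|w|$ subpath of it labelled by a cyclic conjugate of $w$ and using the uniqueness in Lemma~\ref{lem:sigma}, together with the maximality of $\sigma$ and the fact that a reduced labelling is an immersion (so the two ways of extending past the shared subpath agree), one gets that up to a label-preserving automorphism of $\Gamma$ this lift is a subpath of $\sigma$, and that any two maps $\pi\to\sigma$ agree up to an automorphism; hence $\pi$, and similarly $\pi'$, is special. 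Because $\Pi$ and $\Pi'$ are consecutive with exterior parts on $w^n$, the arcs $\pi,\pi'$ meet at a single vertex $v^*$ of $w^n$ and the arc $a\subseteq\Pi\cap\Pi'$ incident at $w^n$ emanates from $v^*$, so Lemma~\ref{lem:consecutive}, applied to the pair in each of the two orientations of $w^n$, yields $|\pi|+|a|<|w|+\frac{|\partial\Pi|}{6}$ and $|\pi'|+|a|<|w|+\frac{|\partial\Pi'|}{6}$.

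Finally I would derive the contradiction from the geometry inside $\sigma$. Since $\Pi$ and $\Pi'$ are special, the lifts of $\pi$ and $\pi'$ are subpaths of $\sigma$; as $|\pi|,|\pi'|>|w|$ while $|\sigma|<2|w|$, these subpaths must overlap in a subpath of length exactly $|w|$, so if $v$ and $v'$ denote the images of $v^*$ under the lifts via $\partial\Pi$ and via $\partial\Pi'$ respectively, then $v$ and $v'$ are distinct vertices of $\sigma$ at distance $|w|$; the two lifts of the arc $a$ (one via $\partial\Pi$ starting at $v$, one via $\partial\Pi'$ starting at $v'$) carry the same label and are essentially distinct because no interior edge of $B_n$ essentially originates from $\Gamma$. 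Arguing in the style of the proofs of Lemmas~\ref{lem:sigma} and \ref{lem:consecutive}, I would turn the displayed inequalities together with this configuration into an impossibility: a label-preserving automorphism of $\Gamma$ forced by the coincidence, when iterated (it has finite order since $\Gamma$ is finite), produces a closed path in $\Gamma$ whose label is a power of a cyclic conjugate of $w$ and hence represents a nontrivial power of $g$, contradicting that labels of closed paths in $\Gamma$ vanish in $G(\Gamma)$ while $g$ has infinite order. I expect this last step — converting the "overlap of length exactly $|w|$ in $\sigma$" into a genuine contradiction, ruling out that the shared arc $a$ can be read through the two faces out of vertices of $\sigma$ differing by a full period — to be the main obstacle; the reduction to long arcs, the verification of specialness, and the two applications of Lemma~\ref{lem:consecutive} are routine, and the delicate part is the careful bookkeeping of orientations and of positions along $\overline w$ needed to make the final argument work.
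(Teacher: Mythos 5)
Your reduction to the case $|\pi|>\frac{|\partial\Pi|}{2}$, the use of Lemma~\ref{lem:inequalities}, the verification that $\Pi$ and $\Pi'$ are special via Lemma~\ref{lem:sigma}, and the two applications of Lemma~\ref{lem:consecutive} all match the intended argument. The gap is in your final step. Having obtained $|\pi|+|a|<|w|+\frac{|\partial\Pi|}{6}$ and $|\pi'|+|a|<|w|+\frac{|\partial\Pi'|}{6}$, you set these aside and instead try to show that the configuration itself (the arc $a$ lifting out of two vertices $v\neq v'$ of $\sigma$ at distance $|w|$) is impossible, by producing an automorphism of $\Gamma$ whose iterates yield a closed path labelled by a power of a conjugate of $w$. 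No such automorphism is forced: precisely because no label-preserving automorphism of $\Gamma$ can take $v$ to $v'$ (a period labelled by a cyclic conjugate of $w$ would give $w$ finite order in $G(\Gamma)$), the two lifts of $a$ (indeed of $\delta a$) are essentially \emph{distinct}, and the only consequence of the configuration is that $\delta a$ is an essential piece --- which is exactly the content of Lemma~\ref{lem:consecutive}, already used. The configuration does occur for consecutive special faces in $B_n$ in general, so the impossibility you hope for is not there to be proved; this step, which you yourself flag as the main obstacle, would fail.

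What is actually needed at that point is only an arc count, which you already have all the ingredients for. If $i(\Pi)=3$, then $\partial\Pi$ consists of $\pi$, the arc $a$, and two further interior arcs; every interior arc of $B_n$ is an essential piece, hence of length $<\frac{|\partial\Pi|}{6}$, so
\begin{equation*}
|\partial\Pi|=|\pi|+|a|+|a_2|+|a_3|<\Bigl(|w|+\tfrac{|\partial\Pi|}{6}\Bigr)+\tfrac{|\partial\Pi|}{6}+\tfrac{|\partial\Pi|}{6}=|w|+\tfrac{|\partial\Pi|}{2}\leqslant|\partial\Pi|,
\end{equation*}
using $2|w|\leqslant|\partial\Pi|$ from Lemma~\ref{lem:inequalities} --- a contradiction; equivalently, Lemma~\ref{lem:consecutive} forces at least three interior arcs of $\Pi$ besides $a$, so in fact $i(\Pi)\geqslant 4$ and $i(\Pi')\geqslant 4$. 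Also note a small inaccuracy en route: the lifts of $\pi$ and $\pi'$ into $\sigma$ need not overlap in a subpath of length exactly $|w|$; what is true (and what you need) is that the two images of the common vertex $v^*$, if distinct, lie at distance exactly $|w|$ in $\sigma$, distinctness coming from the fact that no interior edge of $B_n$ essentially originates from $\Gamma$.
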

\begin{proof}
 Assume that $|\Pi\cap w^n|>\frac{|\partial\Pi|}{2}$ and $|\Pi'\cap w^n|>\frac{|\partial\Pi'|}{2}$. (Otherwise the claim holds by the small cancellation assumption.) By Lemmas~\ref{lem:inequalities} and \ref{lem:sigma}, both are special, and $|\Pi|\geqslant 2|w|$ and $|\Pi'|\geqslant 2|w|$. Now Lemma~\ref{lem:consecutive} shows that, apart from the arc $a$ in the intersection of $\Pi$ and $\Pi'$, both $\Pi$ and $\Pi'$ must have three additional interior arcs, i.e. $i(\Pi)\geqslant 4$ and $i(\Pi')\geqslant 4$. 
\end{proof}

The following fact about diagrams follows immediately from \cite[Corollaries V.3.3 and V.3.4]{LS}.

\begin{lem}
 In a $(3,7)$-diagram, all faces are simply connected, and the intersection of any two faces is empty or connected.
\end{lem}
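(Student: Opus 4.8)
The plan is to derive both assertions from the combinatorial Gauss--Bonnet formula, that is, from Lemma~\ref{lem:curvature} together with its routine disk and annular variants; this is exactly the method by which \cite[Corollaries V.3.3 and V.3.4]{LS} are established, so after the evident dictionary (``diagram'' $\leftrightarrow$ ``map'', ``face'' $\leftrightarrow$ ``region'', ``interior degree'' $\leftrightarrow$ number of interior edges) the statement is a reformulation of those corollaries. The one thing to check is that a $(3,7)$-diagram meets the hypotheses needed there: after pruning spurs and suppressing all valence-$2$ vertices (an operation which leaves the curvature identity and every interior degree, counted as a number of arcs, unchanged) every vertex has degree at least $3$ and every interior face has degree at least $7>6$.

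For the first assertion I would argue as follows. Suppose a face $\Pi$ of a $(3,7)$-diagram $D$ is not simply connected. As a compact connected planar set, $\Pi$ then has a bounded complementary component $U$ with $\partial U\subseteq\partial\Pi$; since $D$ is simply connected and finite one has $\overline U\subseteq D$ and the interior of $U$ meets a $2$-cell, so $\Delta:=\overline U$ is a singular disk subdiagram of $D$ with at least one face, whose boundary path runs inside $\partial\Pi$ and which does not have $\Pi$ among its faces. The decisive observation is that $\overline U$ is separated from $\partial D$ by $\Pi$, so every face of $\Delta$ is an interior face of $D$ and hence has degree at least $7$ (in $\Delta$ as in $D$, since all of its edges are interior edges of $D$, and degrees are unchanged by suppressing valence-$2$ vertices). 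Glue a single $2$-cell $C$ along $\partial\Delta$ to obtain a $2$-complex $\Sigma$ tessellating the sphere and prune spurs and suppress valence-$2$ vertices: then the vertex terms of the curvature formula are non-positive, every face of $\Sigma$ other than $C$ contributes at most $\tfrac12(6-7)=-\tfrac12$, and $C$ contributes at most $\tfrac12(6-1)=\tfrac52$, so the right-hand side of the curvature formula is at most $2<6$, contradicting Lemma~\ref{lem:curvature}. Hence every face of $D$ is simply connected.

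The second assertion is obtained the same way: if $\Pi\cap\Pi'$ had at least two components, joining two of them by a path inside $\Pi$ and a path inside $\Pi'$ would bound a disk $\Omega$ with $\Omega\not\subseteq\Pi\cup\Pi'$, and the subdiagram $\overline\Omega$ would have all of its faces interior in $D$ apart from the at most two outer-layer faces coming from $\Pi$ and $\Pi'$; capping off $\partial\overline\Omega$ with one $2$-cell and invoking Lemma~\ref{lem:curvature} gives the contradiction. The main obstacle — the only place where real care is needed — is the topological bookkeeping: verifying that a non-simply-connected face (respectively a disconnected intersection) really does enclose a subdiagram of the stated form with at least one face, checking that the degree along $\partial\Delta$ behaves as claimed under the passage $D\rightsquigarrow\Delta\rightsquigarrow\Sigma$, and ruling out the apparently degenerate configurations (an enclosed region with no face, or one consisting of a single face glued to $\Pi$ along its whole boundary) — the first is excluded because $D$ is simply connected, the second because such a face would be an interior face bounded by too few arcs, contradicting the $(3,7)$-condition. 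All of this is precisely what is carried out in \cite[Section V.3]{LS}, whence the lemma follows at once.
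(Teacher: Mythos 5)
Your proposal is correct and follows essentially the same route as the paper: the paper's entire proof of this lemma is the single remark that it ``follows immediately from \cite[Corollaries V.3.3 and V.3.4]{LS}'', which is exactly the reduction you make (your supplementary capping-off/curvature sketch is just an outline of how those corollaries are themselves proved, and you rightly flag the degree bookkeeping along $\partial\Delta$ as the only point needing the care that \cite[Section V.3]{LS} supplies). No gap relative to the paper's own treatment.
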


\begin{lem}\label{lem:connected} Let $D$ be a $(3,7)$-diagram. Let $\gamma$ be a subpath of $\partial D$ such that all faces incident at $\gamma$ whose exterior part is contained in $\gamma$ have interior degree at least 3, and no two consecutive such faces have interior degree less than 4. Then for any two faces $\Pi\neq\Pi'$ incident at $\gamma$, $\Pi\cap\Pi'$ is either empty or a connected path incident at $\gamma$. For any face $\Pi$, $\Pi\cap\gamma$ is empty or connected.
\end{lem}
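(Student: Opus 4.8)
The plan is to prove Lemma~\ref{lem:connected} using the standard Strebel-style analysis of $(3,7)$-diagrams together with the two refinements established above, namely Corollary~\ref{cor:degree3} and Corollary~\ref{cor:interior_degree3}. First I would recall from the preceding lemma (and \cite[Corollaries V.3.3 and V.3.4]{LS}) that in any $(3,7)$-diagram every face is simply connected and the intersection of two faces is empty or connected; so the only thing left to prove is that for faces $\Pi\neq\Pi'$ incident at $\gamma$, the connected intersection $\Pi\cap\Pi'$, if nonempty, actually touches $\gamma$, and that $\Pi\cap\gamma$ is connected.

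For the second assertion, suppose $\Pi\cap\gamma$ were disconnected. Then $\Pi$ would meet $\gamma$ in (at least) two disjoint arcs, and the subpath of $\gamma$ between these two arcs together with a subpath of $\partial\Pi$ would bound a subdiagram $E$ of $D$. Since $D$ is a $(3,7)$-diagram, so is $E$; moreover $E$ has the form where $\partial E$ decomposes into one arc along $\partial\Pi$ and one arc along $\gamma\subseteq\partial D$. I would apply Lemma~\ref{lem:strebel} (or rather the hypotheses feeding it, using Corollary~\ref{cor:degree3} to guarantee that boundary faces of $E$ whose exterior part lies in $\gamma$ have interior degree at least $3$, and Corollary~\ref{cor:interior_degree3} to exclude two consecutive ones of interior degree $3$) to conclude that $E$ is either a single face or has shape $I_1$. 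In either case a direct arc-counting/curvature argument along $\partial E$ — the single arc on $\partial\Pi$ contributes too little, while the faces along $\gamma$ cannot make up the deficit given the interior-degree bounds — produces a contradiction, forcing $\Pi\cap\gamma$ to be connected.

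For the first assertion, suppose $\Pi\cap\Pi'$ is a connected path not incident at $\gamma$. Then, because both $\Pi$ and $\Pi'$ are incident at $\gamma$, the arc $\Pi\cap\Pi'$ separates off a subdiagram $E$ of $D$ lying "between'' $\Pi$ and $\Pi'$ and bounded below by a subpath of $\gamma$ and above by subarcs of $\partial\Pi$ and $\partial\Pi'$ meeting at (an endpoint of) $\Pi\cap\Pi'$. Again $E$ is a $(3,7)$-diagram whose boundary splits into an arc on $\partial D$ (inside $\gamma$) and an arc made of at most two pieces of face boundaries, so Lemma~\ref{lem:strebel} applies: $E$ is a single face or of shape $I_1$. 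Using that the two faces $\Pi,\Pi'$ each contribute exterior degree $1$ to $E$ and have controlled interior degree, while the faces genuinely inside $E$ along $\gamma$ satisfy the interior-degree bounds from Corollaries~\ref{cor:degree3} and \ref{cor:interior_degree3}, I would again derive a contradiction with the shape constraint, concluding that $\Pi\cap\Pi'$ must be incident at $\gamma$ (and hence, being connected, is a connected path incident at $\gamma$).

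The main obstacle I anticipate is the careful bookkeeping in the shape-$I_1$ case: I need the hypotheses of Lemma~\ref{lem:strebel} to be verifiable for the auxiliary subdiagram $E$, which requires knowing that every boundary face of $E$ with exterior part in the $\gamma$-portion has interior degree $\geqslant 3$ (Corollary~\ref{cor:degree3}) and that no two such consecutive faces both have interior degree exactly $3$ (Corollary~\ref{cor:interior_degree3}) — and then checking that the extremal faces of a shape-$I_1$ diagram cannot simultaneously satisfy $e=1$ and the interior-degree lower bounds that the $\gamma$-portion imposes. This is a finite case analysis on how the boundary arc of $E$ that lies along $\partial\Pi$ (respectively along $\partial\Pi\cup\partial\Pi'$) interacts with the extremal faces of the $I_1$ picture, and it is exactly where the specific constants of the $C'(\frac{1}{6})$/$Gr'(\frac{1}{6})$ condition are consumed.
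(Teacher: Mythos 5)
There is a genuine gap: your argument hinges on applying Lemma~\ref{lem:strebel} to the cut-off subdiagram $E$, but the hypotheses of that lemma are strictly stronger than what you have. Lemma~\ref{lem:strebel} demands that \emph{every} face with exterior degree $1$ whose exterior part lies in $\delta_1$ or $\delta_2$ has interior degree at least $4$, whereas along the $\gamma$-side you only have interior degree at least $3$ together with the condition that no two consecutive such faces have interior degree $3$ (this is all that the hypotheses of the lemma -- or, in the application, Corollaries~\ref{cor:degree3} and \ref{cor:interior_degree3} -- provide). Under this weaker hypothesis the conclusion ``single face or shape $I_1$'' is simply not available: Strebel's classification admits further shapes once isolated interior-degree-$3$ faces are allowed on one side, so the contradiction you hope to extract from the shape constraint does not exist. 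Your closing paragraph essentially concedes that this case analysis is the missing content, and it cannot be discharged by citing Lemma~\ref{lem:strebel}; indeed, in the paper the shape-$I_1$ argument is only applied \emph{after} Lemma~\ref{lem:connected} has been used to remove the very special faces and thereby restore the degree-$4$ hypothesis. A secondary problem is that passing to $E$ does not cleanly preserve the degree data: interior degrees of faces can drop (arcs merge when boundary vertices lose degree, e.g.\ at vertices shared only with $\Pi$), so even the ``$\geqslant 3$ along $\gamma$'' hypothesis is not automatically inherited by $E$; some device such as a minimality choice is needed to control this. Note also that Lemma~\ref{lem:connected} is a purely combinatorial statement about $(3,7)$-diagrams -- no small cancellation constant is ``consumed'' here -- so importing Corollaries~\ref{cor:degree3} and \ref{cor:interior_degree3} into its proof confuses the statement with its later application to $B_n$.

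For comparison, the paper's proof avoids the shape classification altogether: assuming two faces $\Pi\neq\Pi'$ violate the first assertion, it takes a connected component of $D\setminus(\Pi\cup\Pi')$ meeting $\gamma$ whose closure $\Delta$ has \emph{minimal area}, removes from $\Delta$ all faces incident at $\gamma$ to get $\Delta'$, and shows that $\Delta'$ can contain at most one face with exterior degree $1$ and interior degree at most $3$; this contradicts Strebel's counting identity (\cite[p.~241, Equation (8)]{Str}), which forces every $(3,7)$-disk diagram to contain at least two such faces. If you want to salvage your approach, you would need either to reprove a version of Lemma~\ref{lem:strebel} valid under the weaker ``$\geqslant 3$, no two consecutive $3$'s'' hypothesis, or to switch to a counting argument of this kind.
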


\begin{proof}
Suppose in $D$ and $\gamma$ as above there exist $\Pi\neq\Pi'$ violating the first above statement. This means $D\setminus (\Pi\cup\Pi')$ has more than one connected component. Let $\Delta_0$ be a connected component of $D\setminus(\Pi\cup\Pi')$ with $\Delta_0\cap\gamma\neq\emptyset$, and consider the simple disk diagram $\Delta:=\overline\Delta_0$. Assume we chose $\Pi$ and $\Pi'$ such that $\Delta$ has minimal area among all possible choices. 

From $\Delta$ remove all faces incident at $\gamma$ to obtain a diagram $\Delta'$. Then, by minimality, $\Delta'$ is also a simple disk diagram. Consider a boundary face $f$ in $\Delta'$ with $e(f)=1$ (in $\Delta'$), and suppose $i(f)\leqslant 3$ (in $\Delta'$). Then, since the intersection of any two faces is connected and $f$ has degree at least 7 in $D$, $f$ intersects at least 4 faces from $\Pi,\Pi'$ and those faces of $\Delta$ incident at $\gamma$.
 
By minimality of $\Delta$, the intersection of any two faces of $\Delta\cup\Pi$ incident at $\gamma$ is empty or a connected path incident at $\gamma$, and the same holds for any two faces of $\Delta\cup\Pi'$. By the assumption, $f$ cannot intersect more than 3 faces of $\Delta$ incident at $\gamma$, and if it intersects 3 faces incident at $\gamma$, then $f$ intersects neither $\Pi$ nor $\Pi'$. 
 
Hence $f$ must intersect both $\Pi$ and $\Pi'$. Now there is at most one face such face $f$ in $\Delta'$ intersecting $\Pi,\Pi'$, and at least one face of $\Delta$ incident at $\gamma$. But by \cite[p. 241, Equation (8)]{Str}, any $(3,7)$-disk-diagram must contain at least two faces with exterior degree 1 and interior degree at most 3, a contradiction. 

The final statement follows with the same proof if the two faces $\Pi$ and $\Pi'$ are replaced by a single face $\Pi$.
\end{proof}

\begin{lem}\label{lem:consecutive2} Let $\Pi$ and $\Pi'$ be consecutive faces of $B_n$, where $|\Pi\cap w^n|\geqslant|w|$ and $\Pi'$ is not special. Then $\Pi'\cap w^n$ is a vertex or an essential piece. 
\end{lem}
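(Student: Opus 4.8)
The plan is to argue by contradiction. Suppose $\pi':=\Pi'\cap w^n$ is a non-trivial arc which is \emph{not} an essential piece; I will show that $\ell(\pi')$ is then read on a subpath of $\sigma$, which the hypotheses forbid. (As elsewhere in this case, one may assume that $\Pi\cap w^n$ and $\Pi'\cap w^n$ are arcs.) Write $\pi:=\Pi\cap w^n$. Since $\Pi$ and $\Pi'$ are consecutive, $\pi$ and $\pi'$ meet at a vertex $v$ of $w^n$, and $\pi\pi'$ (or $\pi'\pi$) is a subpath of $w^n$.

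The first step is to unwind the hypotheses on $\Pi'$. As $\pi'$ is not an essential piece, all label-preserving maps $\pi'\to\Gamma$ essentially coincide; hence, if any map $\pi'\to\sigma$ exists, then every composite $\pi'\to\sigma\to\Gamma$ essentially coincides with $\pi'\to\partial\Pi'\to\Gamma$ and with the others, which is exactly the definition of $\Pi'$ being special. Since $\Pi'$ is not special, no map $\pi'\to\sigma$ exists; that is, $\ell(\pi')$ is not read on any subpath of $\sigma$.

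The second step is to locate $\Pi$ along $\sigma$, and then to conclude by cases on $|\pi'|$. Because $|\pi|\geqslant|w|$, the arc $\pi$ contains a subpath labelled by a cyclic conjugate of $w$; by Lemma~\ref{lem:sigma} (uniqueness of the map to $\sigma$) together with the maximality of $\sigma$ from its construction, $\Pi$ is special, and I may pick the lift $\partial\Pi\to\Gamma$ carrying $\pi$ onto a subpath of $\sigma$. Let $\beta$ be the subpath made of the $|w|$ edges of $\pi$ incident to $v$, and $c$ the cyclic conjugate of $w$ read on $\beta$; using $|\pi|\geqslant|w|$, the $|\pi'|$ edges of $\beta$ at the end of $\beta$ away from $v$ still map into $\sigma$. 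If $|\pi'|<|w|$, then the $|w|$-periodicity of $\overline w$ along $w^n$ and the fact that $\pi'$ neighbours $\beta$ force $\ell(\pi')$ to equal the length-$|\pi'|$ prefix, respectively suffix, of $c$ read on exactly those $|\pi'|$ edges of $\beta$; hence $\ell(\pi')$ is read on a subpath of $\sigma$, contradicting the first step. If $|\pi'|\geqslant|w|$, then the $\partial\Pi'$-lift of $\pi'$ is a path of $\Gamma$, contained in a simple closed path, labelled by a subword of $\overline w$ and containing a subpath labelled by a cyclic conjugate of $w$; applying Lemma~\ref{lem:sigma} and the maximality of $\sigma$ as in the treatment of $\Pi$ shows that a label-preserving automorphism of $\Gamma$ carries this lift into $\sigma$, and again $\ell(\pi')$ is read on a subpath of $\sigma$, the same contradiction. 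Therefore $\pi'$ is a vertex or an essential piece.

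The step I expect to be the main obstacle is the first step combined with the use of maximality of $\sigma$ when $|\pi'|\geqslant|w|$: one must treat carefully the possibility that $\ell(\pi')$ is read at several places in $\sigma$, and one needs the precise sense --- packaged into Lemma~\ref{lem:sigma}, using both its uniqueness clause and the maximality of $\sigma$ --- in which a long subword of $\overline w$ realized by a path in $\Gamma$ cannot escape $\sigma$.
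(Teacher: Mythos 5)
Your argument is correct and is essentially the paper's proof run in contrapositive form: the paper deduces from non-specialness that $|\Pi'\cap w^n|<|w|$ via Lemma~\ref{lem:sigma}, notes that a map of $\Pi'\cap w^n$ to $\sigma$ then exists because of the long neighbouring arc of $\Pi$, and concludes that the two lifts are essentially distinct, whereas you assume the arc is not an essential piece and produce the same map to $\sigma$ (in both length regimes, using the same Lemma~\ref{lem:sigma}/maximality input) to contradict non-specialness. The only point to make explicit is that reducing to the case where $\Pi'\cap w^n$ is a single arc is exactly Lemma~\ref{lem:connected}, which the paper invokes at the start of its proof.
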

\begin{proof}
 Note that $\Pi'\cap w^n$ is connected by Lemma~\ref{lem:connected}; assume it is not a vertex. Since $\Pi'$ is not special, we have $|\Pi'\cap w^n|<|w|$ by Lemma~\ref{lem:sigma}. Therefore, by assumption on $\Pi$, $\Pi'\cap w^n$ has a map to $\sigma$. Since $\Pi'$ is not special, we conclude that $\Pi'\cap w^n$ is an essential piece.
\end{proof}

We call a face $\Pi$ in $B_n$ \emph{very special} if the exterior part of $\Pi$ is entirely contained in $w^n$, $e(\Pi)=1$, and $i(\Pi)= 3$. Note that a very special face is, in particular, special by the proof of Corollary~\ref{cor:interior_degree3}. We make the following observations:

\begin{itemize}
 \item A very special face intersects no other special face in edges by Lemmas~\ref{lem:consecutive} and \ref{lem:connected}.
 \item Any non-special face incident at $w^n$ intersects at most two special faces in edges by Lemma~\ref{lem:connected}.
 \item A connected component of the intersection of any face not incident at $w^n$ with the union of all very special faces consists of at most one arc by Corollary~\ref{cor:interior_degree3} and Lemma~\ref{lem:connected}.
 \end{itemize}

\begin{lem}
 There is no very special face, and every disk component of $B_n$ is a either a single face or has shape $I_1$.
\end{lem}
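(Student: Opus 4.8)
The goal is to rule out very special faces and deduce the shape of $B_n$ in case 2b, which is the remaining case of Theorem~\ref{thm:undistorted}. The overall strategy is to run a Strebel-type analysis of the $(3,7)$-diagram $B_n$, combining the curvature bounds from Lemma~\ref{lem:strebel} with the structural constraints proved above, especially Corollaries~\ref{cor:degree3} and \ref{cor:interior_degree3} and the three bulleted observations preceding the statement. First I would verify that $B_n$ is a $(3,7)$-diagram: by the $Gr'(\frac{1}{6})$-condition any interior arc between two faces is an essential piece of length less than $\frac{1}{6}$ of each face boundary, so each interior face has interior degree at least $7$. By Corollary~\ref{cor:degree3}, faces with exterior degree $1$ whose exterior part lies in $w^n$ have interior degree at least $3$; faces whose exterior part lies in $g_n$ have interior degree at least $4$ by the small cancellation assumption since $g_n$ is geodesic (any such face with $i(\Pi)\le 3$ would let us shorten $g_n$). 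Thus the hypotheses of Lemma~\ref{lem:strebel} are \emph{almost} met — the obstruction is precisely that the lemma needs interior degree at least $4$ along $\delta_1=w^n$, but Corollary~\ref{cor:degree3} only gives $3$ there, and exactly when $i(\Pi)=3$ do we have a very special face.

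**Key steps.** The argument is by contradiction: suppose $B_n$ has a very special face, or more generally suppose some disk component $D$ of $B_n$ is not a single face and not of shape $I_1$. I would then apply the curvature formula (as in the proof of Lemma~\ref{lem:strebel}, i.e.\ \cite[Theorem 35]{Str} or \cite[Section V.3]{LS}) to $D$, using the $C(6)$-style curvature accounting where each interior face contributes nonpositively and the total boundary contribution must exceed the Euler-characteristic constant. The key point is to show that the presence of a very special face forces a neighboring face to ``compensate,'' which it cannot: by Corollary~\ref{cor:interior_degree3}, two consecutive faces of exterior degree $1$ with exterior parts in $w^n$ cannot both have interior degree $3$, so every very special face is flanked by faces of interior degree at least $4$ (if they are special) or, if a neighbor is non-special, by the second and third bulleted observations the non-special neighbor meets at most two special faces and the very special face meets no other special face in edges. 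This means one can run the standard curvature estimate treating a very special face together with \emph{half} of each neighbor's ``excess'' interior degree: a very special face contributes $6-i\le 3$ to $\sum(6-d(\Pi))$, but its neighbors contribute at most $6-4=2$ each and cannot be simultaneously very special, so amortizing the $+3$ against the neighbors' $-2$ keeps the running total nonpositive — exactly as in the apex-counting argument in the proof of Claim~2 of Proposition~\ref{prop:main_classical}. Summing these amortized contributions over all boundary faces, together with the nonpositive interior-face contributions and the standard treatment of the $g_n$-side faces (interior degree $\ge 4$, hence contribution $\le 2$), yields a total that contradicts the curvature formula unless $D$ is a single face or has shape $I_1$. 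Having no very special face then follows because a very special face, by definition, would be a face of some disk component with exterior part in $w^n$ and interior degree exactly $3$, which the shape-$I_1$ structure (all interior faces have $i\ge 7$, the two extremal faces have $i=1$, and boundary-but-non-extremal faces have $i=2$) does not permit — an $I_1$ diagram has no face of interior degree $3$ with exterior part entirely in one of $\delta_1,\delta_2$.

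**Main obstacle.** The hard part will be the bookkeeping in the curvature/amortization argument: one must carefully partition the boundary faces into (i) faces on the $g_n$ side, (ii) special faces on the $w^n$ side, (iii) non-special faces on the $w^n$ side, and (iv) the corner faces where $w^n$ meets $g_n$, and then assign each unit of ``positive curvature'' coming from a very special face to nearby faces in a way that is well-defined (no double-counting) and provably keeps every partial sum nonpositive. The three bulleted observations are tailored to make this possible — they ensure very special faces are ``isolated'' among special faces and each non-special face absorbs at most a bounded amount of excess — so the proof should reduce to checking that the worst local configuration (a very special face of contribution $+3$ between two faces of contribution $-2$, or adjacent to a non-special face that also touches one other special face) still nets out $\le 0$. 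A secondary subtlety is handling the corner faces and the possibility that a disk component of $B_n$ has very few faces; here one falls back on the same case analysis as in Lemma~\ref{lem:strebel} itself. Once the curvature contradiction is in place, the conclusion that there is no very special face and that each disk component is a single face or of shape $I_1$ is immediate, and this feeds directly into completing the Hausdorff-distance and undistortedness estimates of case 2b exactly as in case 2a.
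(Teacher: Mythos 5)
Your plan hinges on a discharging/amortization step that does not actually work: you want to conclude, from curvature alone, that a diagram which is a $(3,7)$-diagram, whose $g_n$-side faces have interior degree at least $4$, and whose $w^n$-side faces have interior degree at least $3$ with no two consecutive degree-$3$ faces (Corollaries~\ref{cor:degree3} and \ref{cor:interior_degree3}), must be a single face or of shape $I_1$. But an isolated very special face flanked by special faces of interior degree $4$ is locally and globally consistent with the curvature formula -- it simply produces one of the other Strebel shapes, not a contradiction. The hypothesis of Lemma~\ref{lem:strebel} requires interior degree at least $4$ on \emph{both} sides precisely because degree-$3$ boundary faces, even isolated ones, admit non-$I_1$ shapes; so ``amortizing the $+3$ against the neighbors' $-2$'' cannot force shape $I_1$. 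Your final step is then circular: you rule out very special faces by observing that shape-$I_1$ diagrams contain no face with exterior part in one side and interior degree $3$, but establishing shape $I_1$ for $B_n$ was exactly what required excluding such faces in the first place. (Your curvature bookkeeping is also imprecise -- the spherical formula of Lemma~\ref{lem:curvature} does not apply directly to the disk diagram $B_n$, and the quantities $6-d(\Pi)$ versus interior/exterior degrees are conflated -- but the structural issue above is the real obstruction.)

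The missing idea, which is how the paper proceeds, is a removal trick combined with the geodesicity of $g_n$. One deletes all very special faces to obtain a diagram $B_n'$, and uses Lemma~\ref{lem:consecutive2} together with the three bulleted observations to check that every face of $B_n'$ with exterior arc on the new $w^n$-side boundary now has interior degree at least $4$ (a non-special face that bordered a very special face acquires an exterior arc made of at most $3$ essential pieces, hence shorter than half its boundary). Lemma~\ref{lem:strebel} then applies to $B_n'$ and gives shape $I_1$ (or a single face). Finally, if $B_n$ had contained a very special face, it would meet three faces of $B_n'$ in arcs, and the shape-$I_1$ structure forces one of those three to have a single exterior arc on $g_n$ with interior degree at most $3$; that face would cover more than half of its boundary along $g_n$, contradicting that $g_n$ is a shortest representative. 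Hence $B_n=B_n'$ and the lemma follows. Without some substitute for this last use of the geodesic side, the curvature-only route you propose cannot close.
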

\begin{proof}
From $B_n$, remove all very special faces and consider the remaining diagram $B_n'$. Denote $\partial B_n'=\beta_n\gamma_n^{-1}$ with immersed paths $\beta_n$ and $\gamma_n$, where $\gamma_n$ is isomorphic to $g_n$. We observe:
\begin{itemize}
 \item If $\Pi$ is a face of $B_n'$ that was special in $B_n$ and whose exterior part in $B_n'$ is an arc entirely contained in $\beta_n$, then $i(\Pi)\geqslant 4$. (This follows since special faces that are not very special are not affected at all by removing very special faces by the above observation.)
 \item If $\Pi$ is a face of $B_n'$ that was not special in $B_n$ and whose exterior part in $B_n'$ is an arc entirely contained in $\beta_n$, then $|\Pi\cap \beta_n|<\frac{|\partial\Pi|}{2}$, and, in particular, $i(\Pi)\geqslant 4$. (If $\Pi$ did not intersect any very special face of $B_n$ in an edge, this is immediate. Otherwise, Lemma~\ref{lem:consecutive2} and the above observations show that the arc in $\Pi\cap \beta_n$ consists of at most 3 essential pieces.)
\end{itemize}

Therefore, every disk component of $B_n'$ has shape $I_1$. Suppose there existed a very special face $\Pi$ of $B_n$. Then $\Pi$ intersected 3 faces $\Pi_1,\Pi_2,\Pi_3$ of $B_n'$ in arcs. Considering shape $I_1$, we see that at least one of them had a single boundary arc contained in $g_n$ and interior degree at most 3. This contradicts the fact that $g_n$ is geodesic. Thus we conclude $B_n=B_n'$.
\end{proof}

\begin{lem}
Every face $\Pi$ of $B_n$ satisfies $|\partial\Pi|<6C_0$ and $|\Pi\cap g_n|>0$.
\end{lem}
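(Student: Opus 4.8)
The plan is to read the decomposition of $\partial\Pi$ off the structural description of $B_n$ obtained above and bound each part. Fix a face $\Pi$, lying in a disk component $B$ of $B_n$. By the previous lemma $B$ is a single face or has shape $I_1$, and in either case the edges of $\partial\Pi$ fall into an arc $\pi:=\Pi\cap w^n$, an arc $c:=\Pi\cap g_n$, and at most two interior arcs of $B_n$ (two for the non-extremal faces of a ladder, at most one for a single face or an extremal face); here one uses that $B$ is a $(3,7)$-diagram with no interior face, together with the fact that each disk component meets $\partial B_n$ along a subpath of $w^n$ and a subpath of $g_n$. Lift $\partial\Pi$ to the simple closed path $\gamma_\Pi$ of $\Gamma$ carrying its label, so $|\gamma_\Pi|=|\partial\Pi|$. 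Each interior arc is an essential piece, because no interior edge of $B_n$ essentially originates from $\Gamma$, and it is a subpath of $\gamma_\Pi$; hence it has length $<|\partial\Pi|/6$ by the $Gr'(\tfrac16)$-condition. Also $\pi$ lifts through $\Pi$ to a path in $\Gamma$ labelled by a subword of $\overline w$, so $|\pi|\le C_0$ by the hypothesis of Case 2b.

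Next I would show $|\Pi\cap g_n|>0$ by contradiction. If $c$ is empty, then $\partial\Pi$ consists only of $\pi$ and interior arcs. If $B$ is a single face, then $\partial\Pi$ is a nonempty closed subpath of $w^n$, i.e. a nonempty subword of $\overline w$ representing $1$ in $G(\Gamma)$; this contradicts the minimality of $w$. If $B$ has shape $I_1$, then the exterior part of $\Pi$ is a single arc contained in $w^n$, so $e(\Pi)=1$ and $i(\Pi)\ge 3$ by Corollary~\ref{cor:degree3}; but every face of a shape-$I_1$ ladder has at most two interior arcs, a contradiction. Hence $c$ is a nonempty arc of $g_n$.

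For the perimeter bound I would combine three estimates in the decomposition $|\partial\Pi|=|\pi|+|c|+(\text{interior arcs})$: first $|\pi|\le C_0$; second, the interior arcs contribute strictly less than $|\partial\Pi|/3$, being at most two arcs each of length $<|\partial\Pi|/6$; third $|c|\le|\partial\Pi|/2$, since $c$ is a subpath of the geodesic $g_n$ and hence maps to a geodesic in $\Cay(G(\Gamma),S)$, while the complementary subpath of $\partial\Pi$ joins the same two endpoints and has length $|\partial\Pi|-|c|$. If $\Pi$ has at least one interior arc, these give $|\partial\Pi|<C_0+\tfrac12|\partial\Pi|+\tfrac13|\partial\Pi|$, i.e. $|\partial\Pi|<6C_0$. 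If $\Pi$ has no interior arc (so $B$ is a single face), the geodesic estimate improves to $|c|\le|\pi|\le C_0$, whence $|\partial\Pi|\le 2C_0<6C_0$, using that $C_0>0$ (in Case 2b there is a nonempty path in $\Gamma$ labelled by a subword of $\overline w$, namely the path witnessing Case 2b).

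The main obstacle I anticipate is not the inequalities but the combinatorial bookkeeping behind the decomposition of $\partial\Pi$: confirming that each disk component of $B_n$ meets $\partial B_n$ along a nonempty subpath of $w^n$ and a nonempty subpath of $g_n$ (so that $\pi$ and $c$ really are the single arcs used above and the interior-arc count is at most two), and that $\gamma_\Pi$ is a nontrivial simple closed path so that $Gr'(\tfrac16)$ applies to its essential subpieces. These follow from the absence of nonempty trivial subwords in $\overline w$ (by minimality of $w$) and in the geodesic $g_n$, together with the shape-$I_1$ description, but they need care when $B_n$ is not a simple disk diagram.
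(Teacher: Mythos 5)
Your perimeter bound is essentially the paper's argument: decompose $\partial\Pi$ into $\Pi\cap w^n$ (length at most $C_0$ by the Case~2b hypothesis), $\Pi\cap g_n$ (at most $\tfrac{|\partial\Pi|}{2}$ since $g_n$ is geodesic), and at most two interior arcs, each an essential piece of length less than $\tfrac{|\partial\Pi|}{6}$; this gives $|\partial\Pi|<6C_0$ exactly as in the paper. For the claim $|\Pi\cap g_n|>0$, however, you take a different route (contradiction via Corollary~\ref{cor:degree3} and the single-face/shape-$I_1$ dichotomy), and there is a gap in the single-face branch. You assert that a nonempty closed subpath of $w^n$, i.e.\ a nonempty subword $u$ of $\overline w$ representing $1$ in $G(\Gamma)$, ``contradicts the minimality of $w$.'' Minimality of $w$ only rules this out when $u$ is short: if $|u|<2|w|$, then some cyclic conjugate $w'$ of $w$ (which is a minimal-length representative of a conjugate of $g$, hence of an element of the defining set) would be equal in $G(\Gamma)$ to a word shorter than $|w|$. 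But if $|u|\geqslant 2|w|$, say $u\equiv (w')^a p$ with $a\geqslant 2$ and $|p|<|w|$, triviality of $u$ only says that $(g')^a$ has a representative shorter than $|w|$, and $(g')^a$ need not lie in the set $\{g''\mid \exists k:(g'')^k\text{ conjugate to }h\}$ (no power of $(g')^a$ need be conjugate to $h$ unless $a$ divides the relevant exponent), so no contradiction with the choice of $g$ follows. Ruling out long trivial subwords of $\overline w$ genuinely requires the small cancellation input, not minimality alone.

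The gap is local and easily repaired with the paper's own tool: the face in question has boundary word the label of a simple closed path $\gamma$ in $\Gamma$ that is entirely labelled by a subword of $\overline w$, so $|\gamma\cap\overline w|=|\gamma|>\tfrac{|\gamma|}{2}$ and Lemma~\ref{lem:inequalities} gives $|\gamma\cap\overline w|<\tfrac{2|\gamma|}{3}$, a contradiction. In fact the paper proves $|\Pi\cap g_n|>0$ for all faces at once this way, with no case split: by Lemma~\ref{lem:inequalities} every face satisfies $|\Pi\cap w^n|<\tfrac{2|\partial\Pi|}{3}$, and since the at most two interior arcs contribute less than $\tfrac{|\partial\Pi|}{3}$, the $g_n$-part must contain an edge. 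Your shape-$I_1$ branch via Corollary~\ref{cor:degree3} is correct, but replacing the minimality appeal in the single-face branch by Lemma~\ref{lem:inequalities} (or simply adopting the paper's uniform argument) is needed to close the proof.
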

\begin{proof}
By construction of $C_0$, we have that $|\Pi\cap w^n|<C_0$. Since $|\Pi\cap g_n|\leqslant\frac{|\partial\Pi|}{2}$, and since $\Pi$ intersects at most two other faces, this implies the fist claim.

By Lemma \ref{lem:inequalities}, any face $\Pi$ intersects $w^n$ in less than $\frac{2|\partial\Pi|}{3}$. Therefore, $\Pi\cap g_n$ must contain an edge, whence the second claim follows.
\end{proof}

\begin{proof}[Proof of Theorem~\ref{thm:undistorted} in case 2b.] We conclude $|g_n|>\frac{|w^n|}{C_0}=n\frac{|w|}{C_0}$, and the image of $g_n$ in $\Cay(G(\Gamma),S)$ is within Hausdorff distance $6C_0+|w|$ of $\{1,w,\dots,w^n\}$. 
\end{proof}

For Theorem~\ref{thm:undistorted_product}, the very same proof applies. The only observation to be made is that if $\overline w$ is contained in the image of a $\Cay(G_i,S_i)$ for some $i$, then the cyclic subgroup generated by the conjugate of $g$ represented by $w$ is undistorted (respectively quasi-convex) in $\Cay(G,S)$ if and only if it is undistorted (respectively quasi-convex) in $\Cay(G_i,S_i)$ since any infinite $\Cay(G_i,S_i)$ is isometrically embedded and convex in $\Cay(G(\Gamma),S)$ by \cite[Remark 4.23]{GS}. \hfill $\square$

\end{document}